\documentclass{amsart}

\usepackage{amsmath,tabu}
\usepackage{amsfonts}
\usepackage{amssymb}
\usepackage{amsthm}
\usepackage[bookmarks]{hyperref}
\hypersetup{colorlinks=false}
\usepackage{cleveref}
\usepackage{setspace}
\usepackage{enumerate}


\newcommand{\mS}{\mathcal{S}}
\newcommand{\mT}{\mathcal{T}}

\newcommand{\M}{\mathbb{M}}
\newcommand{\ot}{\otimes}
\newcommand{\ra}{\rightarrow}
\newcommand{\C}{\mathbb{C}}

\newcommand{\B}{B(\mathcal{H})}

\newcommand{\mH}{\mathcal{H}}

\newcommand{\N}{\mathbb{N}}
\newcommand{\F}{\mathbb{F}}
\newcommand{\mC}{\mathcal{C}}
\newcommand{\D}{\mathcal{D}}

\newcommand{\mO}{\mathcal{O}}

\newcommand{\ep}{\varepsilon}

\newcommand{\mF}{\mathfrak{u}}
\newcommand{\mV}{\mathfrak{v}}
\newcommand{\Z}{\mathbb{Z}}

\newcommand{\mcal}{\mathcal}

\theoremstyle{thmit}
\newtheorem{theorem}{Theorem}[section]
\newtheorem{definition}[theorem]{Definition}
\newtheorem{proposition}[theorem]{Proposition}

\newtheorem{corollary}[theorem]{Corollary}

\theoremstyle{thmrm}
\newtheorem{example}[theorem]{Example}
\newtheorem{remarks}[theorem]{Remark}

\title{{ Operator System Nuclearity via $C^*$-envelopes}}
\date{\today}

\author[Ved Gupta]{Ved Prakash Gupta}
\address{School of Physical Sciences\\ Jawaharlal Nehru University\\ New Delhi-110067, INDIA}
\email{vedgupta@mail.jnu.ac.in}

\author[Preeti Luthra]{Preeti Luthra}
\address{Department of Mathematics\\ University of Delhi\\ Delhi-110007, INDIA}
\email{maths.preeti@gmail.com}

\keywords{Operator systems, amenable groups, graphs, nuclearity,
  $C^*$-envelopes, tensor products.\\ 
\noindent\textit{Mathematics Subject Classification (2010): Primary
  46L06, 46L07; Secondary 47L25, 46M05, 46B28}}

\thanks{The first named author was supported
  partially by a Start-Up grant (Grant No. F. 30-51/2014(BSR)) of the
  University Grants Commission (UGC), India and the second named
  author was supported partially by a Junior Research Fellowship
  (Grant No. 09/045(1133)/2011-EMR-I) of Council of Scientific and
  Industrial Research (CSIR), India.}

\begin{document}
\maketitle

\begin{abstract}
We prove that an operator system is (min, ess)-nuclear if its $C^*$-envelope is
nuclear.  This allows us to deduce that an operator system associated
to a generating set of countable discrete group by Farenick et al.~is
(min, ess)-nuclear if and only if the group is amenable.  We also make
a detailed comparison between ess and other operator system tensor
products and show that an operator system associated to a minimal
generating set of a finitely generated discrete group (resp., a finite graph) is (min,
max)-nuclear if and only if the group is of order less than
or equal to $3$ (resp., every component of the graph is complete). 
\end{abstract}

\section{Introduction}\label{s:1}
An operator system is a self-adjoint unital subspace of $\B$ for some
complex Hilbert space $\mathcal{H}$. Choi and Effros (\cite{choi})
obtained an abstract characterization of an operator system and quite
recently this abstraction proved very useful in the development of the
theory of tensor products in the category of operator systems in a
series of papers \cite{KPTT1, KPTT2, han, opsysqt, weakexp, kavnuc}. A
short survey of this development is available in Chapter $4$ of
\cite{faqit}.  Essentially, a lattice of five tensor products of
operator systems consisting of the so called {\em minimal} (min), the
{\em maximal} (max), the {\em maximal commuting} (c), the {\em left
  enveloping} (el) and the {\em right enveloping} (er) tensor products
were introduced in \cite{KPTT1} and unlike the category of
$C^*$-algebras, a variety of nuclearity considerations were made,
{viz.}, given two operator system tensor products $ \alpha$ and
$\beta$, an operator system $\mS$ is said to be $(\alpha,
\beta)$-nuclear if $\mS \ot_{\alpha} \mT = \mS \ot_{\beta} \mT$ for
every operator system $\mT$. In last few years, various
characterizations of these notions of nuclearity among the primary
tensor products in terms of some intrinsic properties of operator
systems, {viz.}, {\em Exactness}, {\em Weak Expectation Property
  (WEP)}, {\em Double Commutant Expectation Property (DCEP)}, {\em
  Operator System Local Lifting Property (OSLLP)} and {\em Completely
  Positive Factorization Property (CPFP)} have been established (
\cite{KPTT1, KPTT2, han, opsysqt, weakexp, kavnuc}); also, it is known
that the maximum nuclearity expected for a finite dimensional operator
system which is not isomorphic to a $C^*$-algebra is (min,
c)-nuclearity, equivalently, $C^*$-nuclearity  (\cite{KPTT1, kavnuc}).

Very recently, Farenick {et al.}~in \cite{disgrp} associated an operator
system $\mS(\mF)$ to every generating set $\mathfrak{u}$ of a discrete
group $G$ and based on the developments of
\cite{opsysqt,weakexp,kavnuc} they could relate the longstanding
operator algebraic questions of {\em Connes' embedding problem} and
{\em Kirchberg's conjecture} to the tensor products of such operator
systems associated to the free group with two generators. In the same
paper they also introduced and initiated the study of the natural operator system
tensor product ``ess'' arising from the enveloping $C^*$-algebras,
viz., $$\mathrm{\mS \ot_{ess} \mT \subseteq C^*_e(\mS) \ot_{max} C^*_e(\mT)}.$$

It was shown in \cite[$\S 10$]{kavnuc} that the $(2n+1)$-dimensional
operator system $\mS_n$ associated to the universal generating set of
the free group with $n$ generators, for $n \geq 2$, is not exact and
hence $\mS_n$ is not $C^*$-nuclear. This also illustrates that unlike
$C^*$-algebras, finite dimensional operator systems need not be
nuclear.

This paper came into existence out of the curiosity to understand
nuclearity properties of the operator systems associated to amenable
discrete groups. Lance proved that the group $C^*$-algebra of a
discrete group is nuclear if and only if the group is amenable
(\cite{lan}). Also, if $\mF$ is a generating set of a discrete group
$G$ and the associated operator system $\mS(\mF)$ is a $C^*$-nuclear
operator system then, by \cite[Corollary $9.6$]{KPTT2}, the group
$C^*$-algebra of $G$ is a nuclear $C^*$-algebra and, therefore, the
group $G$ is amenable. However, it is not yet clear (at least, to us)
whether the converse holds or not. Making some progress in this
direction, we deduce in Section $5$ that for any generating set $\mF$
of a discrete group $G$, $\mS(\mF)$ is $\mathrm{(min, ess)}$-nuclear
if and only if the group is amenable.

The tool that helps us to achieve this is the notion of the $C^*$-envelope
of an operator system which was introduced by Arveson in
\cite{arveson} and whose existence for an arbitrary operator system
was first established by Hamana in \cite{hamana2}.

   It is known that, in general, nuclearity does not pass to
   $C^*$-subalgebras (\cite[Remark 4.4.4]{brownozawa}). In
   particular, the same therefore holds for $\mathrm{(min, c)}$-nuclearity of
   operator systems.  The relationship between an operator system and
   its $C^*$-envelope is equally mysterious, which we highlight
   briefly at the beginning of Section $4$. Thereafter, we are able to
   see that the nuclearity of $C^*$-envelope behaves well if we
   restrict to $\mathrm{(min, ess)}$-nuclearity.  Moreover, we prove
   that if an operator system contains enough unitaries of its
   $C^*$-envelope then its $(\min, \mathrm{ess})$-nuclearity is
   equivalent to the nuclearity of its $C^*$-envelope.
 
After a short section on Preliminaries in Section $2$, we first make
some comparisons between the ``ess" operator system tensor product and
other operator system tensor products in Section $3$.

In Section $5$, apart from the characterization of $(\min,
\mathrm{ess})$-nuclearity of the group operator system $\mS(\mF)$ in terms of
amenability of the group, we also provide an exhaustive list of (min,
max)-nuclear operator systems associated to minimal  generating sets
of finitely generated groups.

In Section $6$, we characterize (min,max)-nuclear graph operator
systems (as introduced in \cite{KPTT1}) for finite graphs, purely, in terms of graph
theoretic properties. We achieve this characterization using an
identification of their $C^*$-envelopes obtained in \cite{OP}.

Finally, in Section $7$, as yet another application of our results on
$C^*$-envelopes, we  discuss nuclearity related properties of some known examples of
operator systems (from \cite{disgrp,  argerami1,
  argerami2}) whose $C^*$-envelopes are either known or whose nuclearity can be easily deduced.

\section{Preliminaries}\label{s:2}

For the basics on operator systems, we refer the reader
to \cite{paulsen, effros-ruan}. And, in order to avoid repetition, we
will freely borrow and follow terminologies and notations for tensor
products of operator systems and nuclearity related properties from
\cite{KPTT1,KPTT2,kavnuc}. However, for the sake of completeness we
include certain definitions and results that will be used
subsequently.

A \textit{$C^*$-cover} (\cite[$\S 2$]{hamana2}) of an operator system
$\mS$ is a pair $(A, i)$ consisting of a unital $C^*$-algebra $A$ and
a complete order embedding $i : \mS \rightarrow A$ such that $i(A)$
generates the $C^*$-algebra $A$. An \textit{essential extension}
(\cite[$\S 2$]{hamana2}) of an operator system $\mathcal{S}$ is a pair
$(\mathcal{W}, j)$ consisting of an operator system $\mathcal{W}$ and
a complete order embedding $j: \mS \ra \mathcal{W}$ such that for any
operator system $\mT$ and any unital completely positive map $\varphi:
\mathcal{W} \ra \mT$, $\varphi$ is a complete order embedding whenever
$\varphi \circ j$ is. The \textit{injective envelope} (\cite{choi}),
denoted by $I(\mS)$, of an operator system $\mS$ is the minimal
injective operator system that contains $\mS$. Its existence and
uniqueness was first proved by Hamana in \cite{hamana2}.

We will work with Hamana's version of $C^*$-envelope \cite{hamana2},
according to which the $C^*$-envelope of an operator system $\mS$ is a
$C^*$-cover defined as the $C^*$-algebra generated by $\mS$ in its
injective envelope $I(\mS)$ and is denoted by $C^*_e (\mS)$. It is
known that the injective envelope $I(\mS)$ is an essential extension
of $\mS$ (\cite[$\S 3$]{hamana2}) and the $C^*$-envelope $C^*_e (\mS)$
enjoys the following universal ``minimality'' property:

{\em Identifying $\mS$ with its image in $C^*_e (\mS)$, for any $C^*$-cover
$(A, i)$ of $\mS$, there is a unique surjective unital
$*$-homomorphism $\pi : A \ra C^*_e (\mS)$ such that $\pi(i(s)) = s$
for every $s$ in $\mS$ (\cite[Corollary 4.2]{hamana2}).}

The following, which is folklore, is an immediate consequence of above
universality and will be used quite often in the coming sections:

\begin{proposition}\label{nuclear-C-cover} 
If an operator system $\mS$ possesses a nuclear $C^*$-cover, then
$C^*_e(\mS)$ is nuclear.
\end{proposition}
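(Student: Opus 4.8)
The plan is to use nothing beyond the universal ``minimality'' property of $C^*_e(\mS)$ quoted just above, combined with the classical permanence of nuclearity under quotients.

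First I would let $(A, i)$ be a nuclear $C^*$-cover of $\mS$. Identifying $\mS$ with its copy inside $C^*_e(\mS)$, the universality of the $C^*$-envelope (\cite[Corollary 4.2]{hamana2}) furnishes a surjective unital $*$-homomorphism $\pi : A \ra C^*_e(\mS)$ fixing $\mS$ pointwise. Consequently $C^*_e(\mS)$ is $*$-isomorphic to the quotient $A/\ker\pi$ of the nuclear $C^*$-algebra $A$.

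Second, I would appeal to the standard fact that a quotient of a nuclear $C^*$-algebra is again nuclear (see, e.g., \cite{brownozawa}); this immediately yields that $C^*_e(\mS)$ is nuclear, which is the assertion.

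As to difficulty, there is really no obstacle once the surjection $\pi$ is produced: the statement is an essentially formal consequence of Hamana's theorem together with a textbook permanence property. The only step that is not entirely trivial is the quotient-permanence of nuclearity itself, which --- if one did not wish to cite it --- would be established by lifting the finite-rank completely positive factorizations of $A$ through a quasicentral approximate unit of $\ker\pi$; but for present purposes quoting it suffices, and this is precisely why the proposition can be regarded as folklore.
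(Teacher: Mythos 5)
Your proposal is correct and follows essentially the same route as the paper: invoke the universal property of $C^*_e(\mS)$ to obtain a surjective unital $*$-homomorphism from the nuclear $C^*$-cover onto $C^*_e(\mS)$, and then cite the permanence of nuclearity under quotients (\cite[Corollary 9.4.4]{brownozawa}). No gaps; the argument matches the paper's proof, just spelled out in slightly more detail.
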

\begin{proof}
Using the universal property of $C^*$-envelopes and the fact that
quotient of a nuclear $C^*$-algebra is nuclear (\cite[Corollary
  $9.4.4$]{brownozawa}) the statement follows.
\end{proof}

 We now list some useful rigidity properties (among which ($ii$) was also
 pointed out in \cite[$\S 1$]{kavnuc}) and an immediate consequence of
 the universality of $C^*_e (\mS)$, and, even though these are
 folklore, we provide their details for the sake of completeness. We would like to 
 thank K.~H.~Han for sharing  alternate proofs of $(i)$ and $(ii)$,
 which we have included here for their brevity.

\begin{proposition}\label{rigidity} 
For an operator system $\mS$, the following hold:
\begin{enumerate}[(i)]
\item Suppose $C^*_e(\mS) \subset \B$ and $\varphi: C^*_e(\mS)
  \rightarrow \B$ is a unital completely positive map that fixes
  $\mS$, then $\varphi$ fixes $C^*_e(\mS)$.
\item If $\psi : C^*_e (\mS) \ra \mT$ is a unital completely
  positive map into an  operator system $\mT$ such that
  $\psi_{|_{\mS}}$ is a complete order embedding then $\psi$ is a
  complete order embedding.
\item If $\mS$ is unitally completely order
  isomorphic to a unital $C^*$-algebra, then $\mS =
  C^*_e(\mS).$ \label{pi}
\end{enumerate}
\end{proposition}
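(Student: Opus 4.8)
The plan is to establish all three items by exploiting the minimality/rigidity of the $C^*$-envelope together with the essentiality of the injective envelope. For $(i)$: since $I(\mS)$ is injective and $C^*_e(\mS) \subseteq I(\mS) \subseteq \B$, I would first extend $\varphi$ to a unital completely positive map $\widetilde\varphi : I(\mS) \ra I(\mS)$ (using injectivity of $I(\mS)$ to land back inside $I(\mS)$, or first into $\B$ and then compose with a projection of $\B$ onto $I(\mS)$). Since $\widetilde\varphi$ fixes $\mS$, the rigidity of the injective envelope forces $\widetilde\varphi = \mathrm{id}_{I(\mS)}$; restricting back to $C^*_e(\mS)$ gives $\varphi = \mathrm{id}$. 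Han's alternate argument that we include presumably avoids passing through $I(\mS)$: one can argue directly that a ucp map fixing a generating operator system of a $C^*$-algebra, and mapping into a $C^*$-algebra containing it, must be multiplicative on that operator system by a Choi-type equality/Stinespring argument, hence fixes the generated $C^*$-algebra.

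For $(ii)$: let $(A,i)$ be any $C^*$-cover of $\mT$ (e.g.\ $A = C^*_e(\mT)$ or just the $C^*$-algebra generated by $\mT$ in some $\B$), so $i \circ \psi : C^*_e(\mS) \ra A$ is ucp and $(i\circ\psi)_{|_\mS}$ is a complete order embedding. It suffices to show $i \circ \psi$ is a complete order embedding, equivalently (being a ucp map) that it is completely isometric and has completely positive inverse on its range; and for this I would invoke the essentiality of the embedding $\mS \hookrightarrow I(\mS) \supseteq C^*_e(\mS)$: extend $i\circ\psi$ to a ucp map $I(\mS) \ra \B$ by injectivity, and the essential-extension property of $I(\mS)$ (stated in the preliminaries) says precisely that a ucp map out of $I(\mS)$ is a complete order embedding as soon as its restriction to $\mS$ is. Then $i\circ\psi$, being the restriction of a complete order embedding to the subspace $C^*_e(\mS)$, is itself a complete order embedding, and hence so is $\psi$.

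For $(iii)$: suppose $u : \mS \ra A$ is a unital complete order isomorphism onto a unital $C^*$-algebra $A$. Then $(A, u)$ is a $C^*$-cover of $\mS$ (it is automatically generated by $u(\mS) = A$), so the universal property yields a surjective unital $*$-homomorphism $\pi : A \ra C^*_e(\mS)$ with $\pi \circ u = \mathrm{id}_\mS$. On the other hand, $u^{-1} : A \ra \mS \subseteq C^*_e(\mS)$ is a unital complete order embedding, and applying $(ii)$ to $u^{-1} \circ \pi : C^*_e(\mS) \to C^*_e(\mS)$ (or directly: $u^{-1}\circ\pi$ fixes $\mS$, so by $(i)$ it is the identity on $C^*_e(\mS)$) shows $\pi$ is injective, hence a $*$-isomorphism; composing, $u$ extends to a $*$-isomorphism $A \cong C^*_e(\mS)$ carrying $\mS$ onto $\mS$, so $\mS = C^*_e(\mS)$.

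The main obstacle is item $(i)$: one must be careful that the extension of $\varphi$ genuinely maps $I(\mS)$ into $I(\mS)$ (not merely into $\B$) so that the rigidity of the injective envelope applies — this uses that $I(\mS)$ is the range of a completely positive idempotent on $\B$ — and, in the alternate approach, that fixing a generating operator system really does propagate to the whole $C^*$-algebra, which requires the Schwarz/multiplicative-domain argument rather than mere linearity. Items $(ii)$ and $(iii)$ are then essentially formal consequences of essentiality and the universal property.
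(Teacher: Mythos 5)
Your arguments for $(i)$ and $(ii)$ are essentially the paper's own. For $(i)$, the paper extends $\varphi$ to a ucp map $\tilde\varphi : I(\mS) \ra B(H)$ and composes with a ucp extension $\tilde\iota : B(H) \ra I(\mS)$ of the inclusion $C^*_e(\mS) \subset I(\mS)$ --- exactly your ``extend into $\B$ and project back into $I(\mS)$'' variant --- and then invokes rigidity of $I(\mS)$; note that rigidity gives that the composite $\tilde\iota\circ\tilde\varphi$ is the identity on $I(\mS)$, and the passage from this to the statement that $\varphi$ itself fixes $C^*_e(\mS)$ is made in your write-up at the same level of detail as in the paper's proof. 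For $(ii)$, the paper likewise extends $\psi$ to a ucp map $I(\mS)\ra B(H)$ and uses that $\mS \subset I(\mS)$ is an essential extension; your version is the same argument, the detour through a $C^*$-cover $(A,i)$ of $\mT$ being harmless but unnecessary.

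The one concrete defect is in $(iii)$. With $\pi : A \ra C^*_e(\mS)$ and $u^{-1} : A \ra \mS$, the composition $u^{-1}\circ\pi$ that you propose to feed into $(i)$ or $(ii)$ is not defined (the codomain of $\pi$ is $C^*_e(\mS)$, not $A$), so your justification that $\pi$ is injective does not stand as written. Fortunately, injectivity of $\pi$ is not needed at all: since $u$ maps $\mS$ onto $A$ and $\pi$ is onto $C^*_e(\mS)$ with $\pi\circ u = \mathrm{id}_{\mS}$, one has $C^*_e(\mS) = \pi(A) = \pi(u(\mS)) = \mS$ directly, which is precisely how the paper concludes (and is implicitly what your final sentence uses). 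So: same route as the paper throughout, with one ill-typed step in $(iii)$ that should simply be deleted.
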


\begin{proof}
 By injectivity of $B(H)$, $\varphi$  admits a unital
 completely positive extension $\tilde{\varphi} : I(\mS) \ra B(H)$ and
 for the same reason, the inclusion $\iota : C^*_e(\mS) \ra I(\mS)$
 also admits a unital completely positive extension $\tilde{\iota} :
 B(H) \ra I(\mS)$. The composition $\tilde{\iota} \circ
 \tilde{\varphi} : I(\mS) \ra I(\mS)$ clearly fixes $\mS$ and is unital completely positive,
 therefore, by ridigity of $I(\mS)$ (as in \cite[$\S 3$]{hamana2}), it
 fixes whole of $I(\mS)$; and since $\varphi$ agrees with $\tilde{\iota} \circ
 \tilde{\varphi}$ on $C^*_e(\mS)$, this proves $(i)$.

Let $\mT\subset B(H)$ for some Hilbert space $H$. Then, by injectivity
of $B(H)$, the map $\psi: C^*_e(\mS) \ra \mT \subset B(H)$ admits a
unital completely positive extension $\tilde{\psi}: I(\mS) \ra
B(H)$. Since $\tilde{\psi}_{|_{\mS}}$ is a complete order embedding
and $\mS \subset I(\mS)$ is an essential extension (\cite[$\S
  3$]{hamana2}), it follows that $\tilde{\psi}$ is a complete order
embedding and hence $(ii)$ follows.

For $(iii)$, let $\varphi : \mS \ra A$ be a unital complete order
isomorphism for some unital $C^*$-algebra $A$.  Then, $(A, \varphi)$
being a $C^*$-cover of $\mS$, by universality of $C^*_e(\mS)$,
there exists a surjective unital $*$-homomorphism $\pi:A \ra
C^*_e(\mS)$ such that $\pi \circ \varphi(s) = s$ for every $s \in \mS$
implying that $\mS = C^*_e(\mS)$.
\end{proof}

We must remark here that \Cref{rigidity}$(iii)$ turns out to be
the main ingredient in the classification of nuclear operator systems
associated to finitely generated groups and finite graphs. It also
allows us to identify some non-nuclear finite dimensional operator
systems as we will see in Section $7$.

An operator subsystem $\mS$ of a unital $C^*$-algebra $A$ is said to
contain enough unitaries of $A$ if the unitaries in $\mS$ generate $A$
as a $C^*$-algebra (\cite[$\S 9$]{KPTT2}). We will come across quite a
few instances where we will have to appeal to the following useful
result (\cite[Proposition $5.6$]{kavnuc}) of Kavruk:

\begin{proposition}\label{unitary-envelope}
Suppose $\mS \subset A$ contains enough unitaries. Then, upto a
$\ast$-isomorphism fixing $\mS$, we have $A = C^*_e(\mS)$.
\end{proposition}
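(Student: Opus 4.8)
The plan is to upgrade the canonical surjection furnished by the universal property of the $C^*$-envelope to a $*$-isomorphism. Since the unitaries in $\mS$ generate $A$, \emph{a fortiori} $\mS$ generates $A$, so the pair $(A,\iota)$ --- with $\iota:\mS\hookrightarrow A$ the inclusion --- is a $C^*$-cover of $\mS$. By the universal ``minimality'' property recalled above, there is a (unique) surjective unital $*$-homomorphism $\pi:A\ra C^*_e(\mS)$ fixing $\mS$. It then suffices to produce a unital $*$-homomorphism $\rho:C^*_e(\mS)\ra A$ that also fixes $\mS$: for then $\pi\circ\rho$ is a unital $*$-homomorphism of $C^*_e(\mS)$ fixing $\mS$, hence fixing the $C^*$-algebra $C^*(\mS)=C^*_e(\mS)$ that $\mS$ generates, so $\pi\circ\rho=\mathrm{id}$; symmetrically $\rho\circ\pi=\mathrm{id}$ because $\mS$ generates $A$ as well. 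Thus $\pi$ is the desired $*$-isomorphism fixing $\mS$.

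To construct $\rho$, represent $A$ faithfully and unitally on a Hilbert space $\mH$. Viewing $\mS$ as an operator subsystem of $C^*_e(\mS)$ and using injectivity of $\B$, the unital completely positive inclusion $\mS\hookrightarrow A\subseteq\B$ extends to a unital completely positive map $\rho:C^*_e(\mS)\ra\B$ which restricts to the identity on $\mS$ (that is, it sends $s\in\mS\subseteq C^*_e(\mS)$ to $s\in\mS\subseteq A$). The crux is to show $\rho$ is multiplicative by verifying that its multiplicative domain exhausts $C^*_e(\mS)$. Let $u\in\mS$ be a unitary of $A$. Applying the $*$-homomorphism $\pi$ to the relations $u^*u=uu^*=1$ holding in $A$ shows that the element $u$, now regarded inside $C^*_e(\mS)$, satisfies $u^*u=uu^*=1$ there too, i.e.\ $u$ is a unitary of $C^*_e(\mS)$. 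Hence $\rho(u^*u)=\rho(1)=1=u^*u=\rho(u)^*\rho(u)$ in $\B$, and likewise $\rho(uu^*)=\rho(u)\rho(u)^*$, so $u$ lies in the multiplicative domain of $\rho$.

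Next I would check that these unitaries generate $C^*_e(\mS)$: since $\mS$ is contained in the $C^*$-subalgebra of $A$ generated by the unitaries of $A$ lying in $\mS$ --- namely all of $A$, by hypothesis --- and $\pi$ is a norm-continuous $*$-homomorphism fixing both $\mS$ and each such unitary, applying $\pi$ places $\mS$ inside the $C^*$-subalgebra of $C^*_e(\mS)$ generated by those unitaries, whence $C^*_e(\mS)=C^*(\mS)$ equals that subalgebra. Since the multiplicative domain of $\rho$ is a norm-closed unital $*$-subalgebra of $C^*_e(\mS)$ containing these generators, it is all of $C^*_e(\mS)$, so $\rho$ is a unital $*$-homomorphism; and its range is the $C^*$-subalgebra generated by $\{\rho(u)\}=\{u\}$, which is $A$. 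Combined with the first paragraph, this finishes the proof.

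The one genuinely delicate point --- and the reason the argument must be routed through $\pi$ --- is the step identifying a unitary of $A$ lying in $\mS$ as a unitary of $C^*_e(\mS)$: a priori the multiplicative structure that $C^*_e(\mS)$ inherits from Hamana's construction bears no relation to that of $A$, and without this identification the multiplicative-domain machinery never gets off the ground.
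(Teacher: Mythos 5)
Your proof is correct. Note that the paper itself supplies no argument for this proposition --- it is quoted as a known result of Kavruk (\cite[Proposition 5.6]{kavnuc}) --- so there is no internal proof to compare against; what you have written is a sound, self-contained reconstruction along the standard lines. The two ingredients are exactly the right ones: the universal property of $C^*_e(\mS)$ yields the surjective unital $*$-homomorphism $\pi: A \ra C^*_e(\mS)$ fixing $\mS$, and an injectivity (Arveson) extension $\rho: C^*_e(\mS) \ra B(\mH)$ of the inclusion $\mS \subset A$ is upgraded to a $*$-homomorphism by the multiplicative-domain argument. You are also right to single out the delicate point and to resolve it through $\pi$: before the identity $\rho(u^*u)=\rho(u)^*\rho(u)$ can be invoked one must know that $u\in\mS$ is unitary \emph{in} $C^*_e(\mS)$, and applying the unital $*$-homomorphism $\pi$ to $u^*u=uu^*=1$ (unital $*$-homomorphisms carry unitaries to unitaries) is precisely how to get this. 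The remaining bookkeeping --- the unitaries of $\mS$ generate $C^*_e(\mS)$ because $\pi$ is surjective and they generate $A$; the multiplicative domain is a closed unital $*$-subalgebra, so $\rho$ is multiplicative and maps into $A$; and a $*$-homomorphism fixing $\mS$ fixes the $C^*$-algebra $\mS$ generates, giving $\pi\circ\rho=\mathrm{id}$ and $\rho\circ\pi=\mathrm{id}$ --- is standard and correctly executed. The usual argument in the literature runs the extension in the other direction (extending the inclusion $\mS\subset I(\mS)$ to a ucp map $A\ra I(\mS)$ and showing it is multiplicative onto $C^*_e(\mS)$); your choice of direction makes the unitarity issue especially transparent, since $\rho(u)=u$ is unitary in $A$ by hypothesis, at the mild cost of the two-sided-inverse argument at the end.
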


In order to distinguish between operator system tensor products and
$C^*$-tensor products, we will use $\ot_{C^*\text{-}{\min}}$ and
$\ot_{C^*\text{-}{\max}}$ to denote the minimal and maximal
$C^*$-tensor products, respectively.
\begin{corollary} 
Let $\mS \subset A$ and $\mT \subset B$ be operator systems containing
enough unitaries of $A$ and $B$ respectively. Then, upto a
$\ast$-isomorphism fixing $\mS \ot_{\mathrm{ess}} \mT$, we have $C^*_e(\mS
\ot_{\mathrm{ess}} \mT)= A \ot_{C^*\text{-}{\max}} B$.
\end{corollary}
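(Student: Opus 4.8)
The statement to be proved is that if $\mS \subset A$ and $\mT \subset B$ contain enough unitaries, then $C^*_e(\mS \ot_{\mathrm{ess}} \mT) = A \ot_{C^*\text{-}\max} B$ (up to a $*$-isomorphism fixing $\mS \ot_{\mathrm{ess}} \mT$). The natural strategy is to invoke \Cref{unitary-envelope}: it suffices to show that $\mS \ot_{\mathrm{ess}} \mT$ sits inside $A \ot_{C^*\text{-}\max} B$ as an operator subsystem that \emph{contains enough unitaries} of $A \ot_{C^*\text{-}\max} B$. So there are really two things to check: first, that the ess-tensor product operator system structure on $\mS \ot \mT$ coincides with the structure it inherits as a subspace of $A \ot_{C^*\text{-}\max} B$; and second, that the unitaries coming from $\mS$ and $\mT$ generate $A \ot_{C^*\text{-}\max} B$ as a $C^*$-algebra.

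\textbf{Identifying the operator system structure.} By \Cref{unitary-envelope}, $A = C^*_e(\mS)$ and $B = C^*_e(\mT)$ (fixing $\mS$ and $\mT$ respectively). By the very definition of the ess tensor product recalled in the introduction, $\mS \ot_{\mathrm{ess}} \mT$ is the operator system obtained by restricting the operator system structure of $C^*_e(\mS) \ot_{C^*\text{-}\max} C^*_e(\mT) = A \ot_{C^*\text{-}\max} B$ to the algebraic tensor product $\mS \ot \mT \subset A \ot B$. Thus $\mS \ot_{\mathrm{ess}} \mT$ is, essentially by construction, an operator subsystem of $A \ot_{C^*\text{-}\max} B$, and the embedding is a complete order embedding. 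This step is really just unwinding definitions, so it should be routine; the one point to be careful about is that $\mS \ot_{\mathrm{ess}} \mT$ is genuinely $\mS \ot \mT$ with the inherited structure and not some quotient or completion thereof, which is exactly how ess is defined.

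\textbf{Enough unitaries in the tensor product.} If $u \in \mS$ is a unitary of $A$ and $v \in \mT$ is a unitary of $B$, then $u \ot v$ is a unitary of $A \ot_{C^*\text{-}\max} B$ lying in $\mS \ot \mT \subseteq \mS \ot_{\mathrm{ess}} \mT$. The $C^*$-algebra generated by all such elementary tensors $u \ot v$ contains $u \ot 1 = (u \ot v)(1 \ot v^*)$ and $1 \ot v$ for all unitaries $u$ of $A$ in $\mS$ and $v$ of $B$ in $\mT$; hence it contains the $C^*$-algebra generated by $A \ot 1$ (since those unitaries generate $A$) and by $1 \ot B$, and therefore it is all of $A \ot_{C^*\text{-}\max} B$. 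So $\mS \ot_{\mathrm{ess}} \mT$ contains enough unitaries of $A \ot_{C^*\text{-}\max} B$. Applying \Cref{unitary-envelope} to the inclusion $\mS \ot_{\mathrm{ess}} \mT \subset A \ot_{C^*\text{-}\max} B$ now yields, up to a $*$-isomorphism fixing $\mS \ot_{\mathrm{ess}} \mT$, that $C^*_e(\mS \ot_{\mathrm{ess}} \mT) = A \ot_{C^*\text{-}\max} B$, as desired.

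\textbf{Main obstacle.} There is no deep obstacle here: the corollary is a clean combination of the definition of ess with \Cref{unitary-envelope}. The only genuine subtlety worth spelling out is the identification in the first step — one must be sure that the ambient $C^*$-algebra in the defining inclusion $\mS \ot_{\mathrm{ess}} \mT \subseteq C^*_e(\mS) \ot_{C^*\text{-}\max} C^*_e(\mT)$ really is $A \ot_{C^*\text{-}\max} B$, which is where the hypothesis ``enough unitaries'' (via \Cref{unitary-envelope}) is used, rather than being some other $C^*$-cover of $\mS$ or $\mT$. Once that identification is in place, the generation argument is immediate.
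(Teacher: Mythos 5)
Your proof is correct and follows essentially the same route as the paper: identify $A=C^*_e(\mS)$ and $B=C^*_e(\mT)$ via the enough-unitaries proposition, observe that by the very definition of $\ot_{\mathrm{ess}}$ the system $\mS\ot_{\mathrm{ess}}\mT$ sits in $A\ot_{C^*\text{-}\max}B$ and contains enough unitaries of it, and apply that proposition once more. The paper merely states this in one line; your write-up just makes the generation argument explicit.
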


\begin{proof} 

By the definition of $\ot_{\text{ess}}$, we note that $\mS \ot_{\mathrm{ess}}
\mT$ contains enough unitaries of $C^*_e(\mS) \ot_{C^*\text{-}{\max}}
C^*_e(\mT)$ and hence, by \Cref{unitary-envelope}, there
does exist a desired $\ast$- isomorphism.
\end{proof}

Apart from the $C^*$-envelope of an operator system, there is one more
fundamental $C^*$-cover associated to an operator system $\mS$,
namely, the universal $C^*$-algebra $C^*_u(\mS)$ introduced by
Kirchberg and Wassermann (\cite[$\S 3$]{kirchberg}). We use this
notion and operator system techniques to provide a proof of the
following folklore result:

\begin{proposition}\label{non-nuclear}
If $A$ is a non-nuclear unital $C^*$-algebra, then there exists a
unital $C^*$-algebra $B$ such that there is no $\ast$-isomorphism
between $A \ot_{C^*\text{-}{\min}} B$ and $ A \ot_{C^*\text{-}{\max}}
B$ fixing $A \ot B$.
\end{proposition}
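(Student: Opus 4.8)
The plan is to exploit the fact that operator-system nuclearity and $C^*$-algebraic nuclearity interact through the $C^*$-envelope (Proposition~\ref{nuclear-C-cover}), but read in the contrapositive: since $A$ is non-nuclear and $A$ is a $C^*$-cover of itself, we know $C^*_e(A)=A$ is non-nuclear, so $A$, viewed as an operator system, cannot be $(\min,\max)$-nuclear — for a $C^*$-algebra the operator-system max tensor product restricts to the $C^*$-maximal one, and $(\min,\max)$-nuclearity of the operator system $A$ would force $A$ to be a nuclear $C^*$-algebra. Hence there is an operator system $\mT$ with $A\ot_{\min}\mT\neq A\ot_{\max}\mT$ (as operator systems). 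The task is to upgrade this witness $\mT$ to a \emph{unital $C^*$-algebra} $B$ and to pass from ``no unital complete order isomorphism fixing $A\ot\mT$'' to ``no $\ast$-isomorphism fixing $A\ot B$.''

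First I would take $\mT$ as above and set $B=C^*_u(\mT)$, the universal $C^*$-algebra of Kirchberg--Wassermann, which is a unital $C^*$-cover of $\mT$ with the property that every unital completely positive map out of $\mT$ extends to a $\ast$-homomorphism out of $C^*_u(\mT)$; in particular the inclusion $\mT\hookrightarrow B$ is a complete order embedding. The key functoriality I would invoke is that $C^*_u$ ``commutes'' with the appropriate tensor products: $C^*_u(A\ot_{\max}\mT)=A\ot_{C^*\text{-}\max} B$ (the max operator system tensor product is designed so that $C^*_u$ of it is the $C^*$-maximal tensor product of the universal $C^*$-covers — this is \cite[\S5]{kavnuc} / \cite[\S6]{KPTT1}), while on the minimal side $\mS\ot_{\min}\mT\subseteq C^*_e(\mS)\ot_{C^*\text{-}\min}C^*_e(\mT)$ realizes the min operator system tensor product inside the $C^*$-minimal tensor product of the $C^*$-envelopes; since $C^*_e(A)=A$ (as $A$ is already a $C^*$-algebra, Proposition~\ref{rigidity}(iii)), this gives $A\ot_{\min}\mT\subseteq A\ot_{C^*\text{-}\min}C^*_e(\mT)$. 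Now suppose, for contradiction, that there were a $\ast$-isomorphism $\Phi: A\ot_{C^*\text{-}\min}B\to A\ot_{C^*\text{-}\max}B$ fixing $A\ot B$. Restricting $\Phi$ and its inverse to the operator subsystems $A\ot_{\min}\mT$ and $A\ot_{\max}\mT$ respectively — using that $B\supseteq\mT$ is a complete order embedding on each factor so that $A\ot_{\min}\mT\subseteq A\ot_{C^*\text{-}\min}B$ and $A\ot_{\max}\mT\subseteq A\ot_{C^*\text{-}\max}B$ as operator systems — would produce mutually inverse unital completely positive maps between $A\ot_{\min}\mT$ and $A\ot_{\max}\mT$ fixing $A\ot\mT$, i.e.\ a unital complete order isomorphism. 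Since the identity on $A\ot\mT$ always extends to a ucp map $A\ot_{\max}\mT\to A\ot_{\min}\mT$, this would say $A\ot_{\min}\mT=A\ot_{\max}\mT$, contradicting the choice of $\mT$.

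The step I expect to be the main obstacle is the precise verification that the operator-system inclusions $A\ot_{\min}\mT\subseteq A\ot_{C^*\text{-}\min}B$ and $A\ot_{\max}\mT\subseteq A\ot_{C^*\text{-}\max}B$ are genuinely \emph{complete order} embeddings (not merely unital completely positive maps), together with the functoriality identity $C^*_u(A\ot_{\max}\mT)\cong A\ot_{C^*\text{-}\max}C^*_u(\mT)$ fixing $A\ot\mT$ — this is where the detailed machinery of \cite{KPTT1,kavnuc} is needed, and in particular one uses that $C^*_u$ of a $C^*$-algebra is itself (so the $A$-factor is unproblematic) and the universal property of $C^*_u$ on the $\mT$-factor to lift ucp maps to $\ast$-homomorphisms. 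Once these two compatibility statements are in hand, the contradiction argument above is purely formal. As a minor point, one should also note $B$ is genuinely a $C^*$-algebra with $B\neq 0$ and $\mT\hookrightarrow B$ unital, which is immediate from the construction of $C^*_u$.
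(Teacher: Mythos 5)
Your proposal follows essentially the same route as the paper: you produce an operator system $\mT$ with $A\ot_{\min}\mT\neq A\ot_{\max}\mT$ from \cite[Corollary 6.8]{KPTT1}, set $B=C^*_u(\mT)$, embed the operator-system minimal and maximal tensor products into $A\ot_{C^*\text{-}\min}B$ and $A\ot_{C^*\text{-}\max}B$ respectively, and conclude; the only structural difference is at the end, where the paper invokes \cite[Theorem 5.12]{KPTT1} to pass from $A\ot_{\min}B\neq A\ot_{\max}B$ to the non-existence of a $\ast$-isomorphism fixing $A\ot B$, whereas you argue that direction directly by restricting a hypothetical $\ast$-isomorphism to the operator subsystem $A\ot\mT$ --- which is fine, since such a restriction is ucp and fixes $A\ot\mT$ elementwise.

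However, your justification of the step you yourself single out as the crux (the complete order embedding on the max side) contains two misstatements. First, $C^*_u(A)$ is \emph{not} $A$ for a $C^*$-algebra $A$ in general; it is the $C^*$-envelope, not the universal $C^*$-algebra, that returns $A$ (ucp maps on $A$ do not extend to $\ast$-homomorphisms on $A$ itself). Second, the tensor product that is ``designed'' to sit inside $C^*_u(\cdot)\ot_{C^*\text{-}\max}C^*_u(\cdot)$ is the commuting tensor product $\mathrm{c}$, not $\max$, so the identity $C^*_u(A\ot_{\max}\mT)=A\ot_{C^*\text{-}\max}C^*_u(\mT)$ as you state it is not the right tool (and with $A$ on the left it cannot hold, by the first point). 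What the argument actually needs, and what the paper uses, is only the complete order embedding $A\ot_{\max}\mT=A\ot_{\mathrm c}\mT\subseteq A\ot_{\max}C^*_u(\mT)$, obtained from the fact that $\mathrm{c}$ and $\max$ agree when one factor is a $C^*$-algebra (\cite[Theorem 6.7]{KPTT1}) together with \cite[Lemma 2.5]{KPTT2}; the min-side embedding is injectivity of $\min$, as you say. With that repair your argument is correct and coincides with the paper's proof.
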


\begin{proof}
Since $A$ is a non-nuclear $C^*$-algebra, by \cite[Corollary
  6.8]{KPTT1}, there exists an operator system $\mS$ such that $A
\ot_{\min} \mS \neq A \ot_{\max} \mS$. Now, using injectivity of
$\ot_{\min}$, $A \ot_{\min} \mS \subseteq A \ot_{\min} C^*_u(\mS)$ and, by
definition of $\ot_c$ and the fact that $\ot_c$ coincides with $\ot_{\max}$ if
one of the factors in the tensor product is a $C^*$-algebra
(\cite[Theorem $6.7$]{KPTT1}), we have, $A \ot_{\max} \mS = A
\ot_{\mathrm{c}} \mS \subseteq A \ot_{\max} C^*_u(\mS)$, where the
last complete order embedding is guaranteed by \cite[Lemma
  $2.5$]{KPTT2}.  Thus, $A \ot_{\min} C_u^*(\mS) \neq A \ot_{\max}
C_u^*(\mS)$ and \cite[Theorem 5.12]{KPTT1} then clearly implies that
for the unital $C^*$-algebra $B = C^*_u(\mS)$, there does not exist a
$\ast$-isomorphism between $A \otimes_{C^*\text{-}\min} B$ and $ A
\otimes_{C^*\text{-}\max} B$ fixing $A \ot B$.
\end{proof}

Kirchberg had realized that exactness is quite a fundamental property
in the category of $C^*$-algebras and over the years it has shown its
prominence in the categories of operator spaces and operator systems
as well. The notion of exactness saw its relevance in the theory of
operator systems after Kavruk et al. developed an appropriate
formalism of the notion of quotient of operator systems in \cite[$\S
  3$]{KPTT2}:

Given an operator system $\mS$, a subspace $J\subseteq \mS$ is said to
be a \textit{kernel} (\cite[Definition $3.2$]{KPTT2}) if there exists
an operator system $\mT$ and a unital completely positive map $\phi :
\mS \ra \mT$ such that $J = \ker \phi$. For such a kernerl $J \subset
\mS$, Kavruk et al. showed that the quotient space $\mS/J$ forms an
operator system (\cite[Proposition $3.4$]{KPTT2}) with respect to the
natural involution, whose positive cones are given by \begin{eqnarray*}
  \mC_n &=& \mC_n(\mS/J)\\ &=& \{(s_{ij} + J)_{i,j} \in M_n(\mS/J):\;
  (s_{ij}) + \ep(1 + J)_n \in \D_n \;\text{for every} \; \ep >
  0\},\end{eqnarray*} where $$\D_n=\{(s_{ij}+J)_{i,j} \in M_n(\mS/J):
\;(s_{ij}+y_{ij})_{i,j} \in M_n(\mS)^+\; \text{for some $y_{ij} \in
  J$}\}$$ and the Archimedean unit is the coset $1 + J$.

For an operator system $\mS$,
a unital $C^*$-algebra $A$ and a closed ideal $I$ in $A$, let $\mS
\bar{\ot} I$ denote the closure of $\mS \ot I$ in the completion $\mS
\hat{\ot}_{\min} A$ of the minimal tensor product $\mS \ot_{\min}
A$. Then,  $\mS \bar{\ot} I$ is a
kernel in $\mS \hat{\ot}_{\mathrm{min}} A$ and the induced map $(\mS
\hat{\ot}_{\mathrm{min}} A)/(\mS \bar{\ot} I) \ra \mS
\hat{\ot}_{\mathrm{min}} (A/ I)$ is unital and completely positive (\cite[$\S 4$]{kavnuc}).

\begin{definition} \cite{KPTT2}\label{exactness} 
An operator system $\mS$ is said to be 
 $\mathrm{exact}$ if 
for every unital $C^*$-algebra $A$ and a closed ideal $I$ in $A$ the induced map
$$(\mS \hat{\ot}_{\mathrm{min}} A)/(\mS \bar{\ot} I) \ra \mS \hat{\ot}_{\mathrm{min}}
(A/ I)$$ is a complete order isomorphism.
 \end{definition}
Exactness is one of the few intrinsic properties of operator systems
that has been used as a tool, by Kavruk et al.~(see for example
\cite{kavnuc}), in characterizing nuclearity properties of operator
systems.  Recall the term \emph{nuclearity} for operator systems, a
generalization from the category of $C^*$-algebras, that was
introduced in \cite[$\S 3$]{KPTT1}:

 Given two operator system tensor
products $\alpha$ and $\beta$, an operator system $\mS$ is said to be
\emph{$(\alpha, \beta)$-nuclear} if the identity map between $\mS
\ot_{\alpha} \mT$ and $\mS \ot_{\beta} \mT$ is a complete order
isomorphism for every operator system $\mT$, i.e. $$\mS \ot_{\alpha}
\mT = \mS \ot_{\beta} \mT.$$ Also, an operator system $\mS$ is said to
be \textit{$C^*$-nuclear}, if $$\mS \ot_{\mathrm{min}} A =\mS
\ot_{\mathrm{max}} A$$ for all unital $C^*$-algebras $A$.

The following characterizations established in \cite[$\S 5$]{KPTT2}
and \cite[$\S 4$]{kavnuc} are used quite often:
\begin{theorem}\label{stability}
 \begin{enumerate}[(i)]
\item An operator system $\mS$ is exact if and only if it is $\mathrm{(min,el)}$-nuclear.
\item  Exactness passes to operator subsystems, i.e., if $\mS$ is exact then
every operator subsystem of $\mS$ is exact. Conversely, if every
finite dimensional operator subsystem of $\mS$ is exact then $\mS$ is
exact.
\item An operator system $\mS$ is $\mathrm{(min,c)}$-nuclear if and
  only if $\mS$ is $C^*$-nuclear.
\item An operator system is $\mathrm{(c,max)}$-nuclear if and only if
  it is unitally completely order isomorphic to a $C^*$-algebra.

\end{enumerate}
\end{theorem}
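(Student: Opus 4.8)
The plan is to prove the four characterizations essentially as in \cite[\S 5]{KPTT2} and \cite[\S 4]{kavnuc}, the recurring device being that an operator system test object $\mT$ can be traded for the $C^*$-algebra $C^*_u(\mT)$: the minimal tensor product is injective, while $\mS\ot_c\mT$ embeds completely order isomorphically into $\mS\ot_c C^*_u(\mT)$ by \cite[Lemma 2.5]{KPTT2}, and on a $C^*$-algebra test object $\ot_c$ collapses to $\ot_{\max}$ by \cite[Theorem 6.7]{KPTT1}. I would dispatch $(iii)$ first, as it is the cleanest. If $\mS$ is $C^*$-nuclear and $\mT$ is an arbitrary operator system, then $\mS\ot_{\min}\mT$ and $\mS\ot_c\mT$ sit as operator subsystems of $\mS\ot_{\min}C^*_u(\mT)$ and $\mS\ot_c C^*_u(\mT)$ respectively; but $C^*$-nuclearity of $\mS$ gives $\mS\ot_{\min}C^*_u(\mT)=\mS\ot_{\max}C^*_u(\mT)$, which equals $\mS\ot_c C^*_u(\mT)$ by \cite[Theorem 6.7]{KPTT1}, so the two structures induced on $\mS\ot\mT$ coincide, i.e. $\mS$ is $\mathrm{(min,c)}$-nuclear. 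The converse is immediate: for a $C^*$-algebra $A$, $\mathrm{(min,c)}$-nuclearity yields $\mS\ot_{\min}A=\mS\ot_c A=\mS\ot_{\max}A$.

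For $(i)$ I would use that $\mS\ot_{\mathrm{el}}\mT$ is, by construction, $\mS\ot\mT$ carrying the matrix order pulled back along the canonical embedding $\mS\ot\mT\hookrightarrow I(\mS)\ot_{\max}\mT$, that $I(\mS)$ is a unital $C^*$-algebra, and the key structural fact that $I(\mS)\ot_{\max}(-)$ behaves like $\ot_{\max}$ for $C^*$-algebras in that it carries $C^*$-quotients to quotients, $(I(\mS)\hat\ot_{\max}A)/(I(\mS)\bar\ot I)\cong I(\mS)\hat\ot_{\max}(A/I)$. Granting this, if $\mS$ is $\mathrm{(min,el)}$-nuclear then $\mS\hat\ot_{\min}A=\mS\hat\ot_{\mathrm{el}}A$ is an operator subsystem of $I(\mS)\hat\ot_{\max}A$, and passing to the quotient by $\mS\bar\ot I$ and invoking the displayed identity forces $(\mS\hat\ot_{\min}A)/(\mS\bar\ot I)=\mS\hat\ot_{\min}(A/I)$, which is exactness. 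For the converse I would first reduce to $\mS$ finite dimensional using $(ii)$ and then use the finite-dimensional approximation description of exactness, checking that the approximating unital complete order embeddings into matrix algebras that are compatible with $\ot_{\min}$ remain compatible with $\ot_{\mathrm{el}}$, so that $\ot_{\mathrm{el}}=\ot_{\min}$ on $\mS\ot\mT$ for every $\mT$; the passage to infinite-dimensional $\mS$ is then handled by the behaviour of both tensor products under inductive limits.

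For $(ii)$, given an operator subsystem $\mS_0\subseteq\mS$, injectivity of $\ot_{\min}$ gives $\mS_0\hat\ot_{\min}A\subseteq\mS\hat\ot_{\min}A$; I would verify that $\mS_0\bar\ot I=(\mS\bar\ot I)\cap(\mS_0\hat\ot_{\min}A)$ and, using a unital completely positive idempotent on $I(\mS)$ whose range is a copy of $I(\mS_0)$, that the induced map $(\mS_0\hat\ot_{\min}A)/(\mS_0\bar\ot I)\to(\mS\hat\ot_{\min}A)/(\mS\bar\ot I)$ is again a complete order embedding; comparing with the same picture for $A/I$ then transports exactness of $\mS$ to $\mS_0$. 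The converse is a routine localization, since the positivity of a given element of $M_n(\mS\hat\ot_{\min}(A/I))$ and any possible obstruction to its positivity upstairs involve only finitely many elements of $\mS$ and hence a finite dimensional subsystem, to which the hypothesis applies; a density argument then closes the gap.

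Finally, $(iv)$ is where I expect the genuine difficulty. One implication is immediate: if $\mS$ is unitally completely order isomorphic to a $C^*$-algebra $A$, then $A\ot_c\mT=A\ot_{\max}\mT$ for all $\mT$ by \cite[Theorem 6.7]{KPTT1}, so $\mS$ is $\mathrm{(c,max)}$-nuclear. For the reverse implication my plan is to reduce to $\mS$ finite dimensional and to pair $\mS$ with its operator system dual $\mS^d$: the canonical element $\chi\in\mS\ot\mS^d$ representing the identity map of $\mS$ is automatically positive in $\mS\ot_{\min}\mS^d$, hence in $\mS\ot_c\mS^d$, whence $\mathrm{(c,max)}$-nuclearity forces $\chi$ to be positive already in $\mS\ot_{\max}\mS^d$; the crux is then to show that this extra positivity can occur only when $\mS$ admits a compatible associative $\ast$-product, equivalently, by \Cref{rigidity}$(iii)$, only when $\mS=C^*_e(\mS)$. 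Converting the abstract positivity of $\chi$ into an honest $C^*$-multiplication is the main obstacle, and for that I would import the duality and canonical-element calculus developed in \cite[\S 4]{kavnuc}.
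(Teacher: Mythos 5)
The paper does not actually prove this theorem --- it is quoted verbatim from \cite[$\S 5$]{KPTT2} and \cite[$\S 4$]{kavnuc} --- so your sketch has to stand on its own. Parts $(ii)$ and $(iii)$ do: your argument for $(iii)$, trading $\mT$ for $C^*_u(\mT)$ via injectivity of $\ot_{\min}$, \cite[Lemma 2.5]{KPTT2} and \cite[Theorem 6.7]{KPTT1}, is exactly the standard proof, and your treatment of $(ii)$ is a reasonable rendering of the localization arguments in \cite[$\S 5$]{KPTT2}. For $(i)$, the direction $(\min,\mathrm{el})$-nuclear $\Rightarrow$ exact via the quotient compatibility of $I(\mS)\ot_{\max}(-)$ is right, but your converse is shakier than you suggest: a finite dimensional exact operator system need not admit an honest complete order embedding into a matrix algebra (exactness is an approximation property), and the standard proof does not go through matrix-algebra approximations at all; it shows $\min=\mathrm{el}$ against $C^*$-algebras and then passes to arbitrary $\mT$ using injectivity of $\min$ together with right injectivity of $\mathrm{el}$.

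The genuine gap is in $(iv)$. First, your opening move ``reduce to $\mS$ finite dimensional'' cannot work: $(\mathrm{c},\max)$-nuclearity does not localize to operator subsystems. Indeed every operator system embeds in some $B(H)$, and $B(H)$ is $(\mathrm{c},\max)$-nuclear by \cite[Theorem 6.7]{KPTT1}; if the property passed to (finite dimensional) subsystems, the theorem you are proving would force every operator system to be a $C^*$-algebra, which is absurd. (Also, for infinite dimensional $\mS$ the dual $\mS^d$ you want to pair with is not even an operator system, as recalled in Section 4 of this paper.) Second, even after an (unjustified) restriction to finite dimensions, positivity of the canonical element $\chi$ in $\mS\ot_{\max}\mS^d$ only yields approximate factorizations of $\mathrm{id}_{\mS}$ through matrix algebras, i.e.\ essentially CPFP/$(\min,\max)$-nuclearity; the passage from that to an actual $C^*$-product on $\mS$ is precisely the nontrivial content of \cite[$\S 4$]{kavnuc}, and you explicitly defer it (``the main obstacle'') to that source --- which is the very reference the theorem is being quoted from. \Cref{rigidity}$(iii)$ does not help here: it only converts an already-given $C^*$-algebra structure into the equality $\mS=C^*_e(\mS)$, it does not manufacture a multiplication. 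So the hard implication of $(iv)$, $(\mathrm{c},\max)$-nuclear $\Rightarrow$ $C^*$-algebra, is not proved by your proposal.
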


We now concentrate on the main class of operator systems that we study
in this article, namely, the operator systems associated to generating
sets of discrete groups.  Let $G$ denote a countable discrete group,
$\mF$ denote a generating set of $G$ and $\mS(\mF)$ denote the
operator system associated to $\mF$ by Farenick et al. in
\cite{disgrp}, i.e., $\mS(\mF) := \text{span}\{1, u, u^*:\ u \in \mF\}
\subset C^*(G)$, where $C^*(G)$ denotes the full group $C^*$-algebra
of the group $G$ (\cite[Chapter $8$]{pisier}).  It was shown in
\cite{disgrp} that if $\mF$ is a generating set of the free group
$\mathbb{F}_n$, then $\mS(\mF)$ is independent of the generating set
$\mF$ and is simply denoted by $\mS_n$.  In general, such independence
is not expected.

The following observation of \cite{disgrp} is immediate from
\Cref{unitary-envelope} and plays a fundamental role in the analysis
of nuclearity of operator systems associated to discrete groups.

\begin{proposition}\label{gpenvelope}
Let $\mF$ be a generating set of a discrete group $G$. Then, up to a
$*$-isomorphism that fixes the elements of $\mS(\mF)$, we have
$C^*_e(\mS(\mF))=C^*(G).$
\end{proposition}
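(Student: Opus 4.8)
The plan is to obtain this as a direct consequence of Kavruk's result, \Cref{unitary-envelope}, once we verify that $\mS(\mF)$ sits inside $C^*(G)$ as an operator subsystem containing enough unitaries. First I would recall that by the very definition $\mS(\mF) = \mathrm{span}\{1, u, u^* : u \in \mF\} \subseteq C^*(G)$, and that $\mF$ is a generating set of $G$; under the canonical embedding of $G$ into the unitary group of $C^*(G)$ (the group $C^*$-algebra being universal for unitary representations of $G$), each $u \in \mF$ is a unitary element of $C^*(G)$ lying in $\mS(\mF)$. Since $\mF$ generates $G$ as a group and the image of $G$ generates $C^*(G)$ as a $C^*$-algebra by universality, the set $\{u : u \in \mF\} \subseteq \mS(\mF)$ generates $C^*(G)$ as a $C^*$-algebra. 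Hence $\mS(\mF) \subseteq C^*(G)$ contains enough unitaries in the sense recalled before \Cref{unitary-envelope}.

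With this observation in hand, \Cref{unitary-envelope} applies verbatim with $\mS = \mS(\mF)$ and $A = C^*(G)$: it yields a $\ast$-isomorphism $C^*_e(\mS(\mF)) \to C^*(G)$ that fixes $\mS(\mF)$ pointwise, which is exactly the assertion. I would also note, for completeness, that one could instead argue via the universal property of the $C^*$-envelope together with the universal property of $C^*(G)$, but the route through \Cref{unitary-envelope} is the cleanest and is the one the surrounding text signals.

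The only point requiring a modicum of care — and the step I expect to be the mild obstacle — is confirming that the embedding $\mS(\mF) \hookrightarrow C^*(G)$ is a complete order embedding with $C^*(G)$ a genuine $C^*$-cover, i.e.\ that $\mS(\mF)$ really generates $C^*(G)$ and not a proper subalgebra. This is where one uses that $C^*(G)$ is generated (as a $C^*$-algebra) by the unitaries $\{\lambda_G(g) : g \in G\}$ and that every $g \in G$ is a finite product of elements of $\mF \cup \mF^{-1}$, so $\lambda_G(g)$ lies in the $C^*$-algebra generated by $\{u : u \in \mF\} \subseteq \mS(\mF)$; taking closures gives all of $C^*(G)$. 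Once that is spelled out, the proposition follows immediately.
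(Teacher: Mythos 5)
Your proposal is correct and follows exactly the route the paper takes: the paper states this proposition as an immediate consequence of Kavruk's result (\Cref{unitary-envelope}), precisely because the unitaries $u \in \mF$ lying in $\mS(\mF)$ generate $C^*(G)$, as you verify. (Only a notational caution: writing $\lambda_G(g)$ suggests the left regular representation, which generates $C^*_r(G)$; you clearly intend the canonical universal unitaries of $C^*(G)$, so just use that terminology.)
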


Since the reduced group $C^*$-algebra is equally important as the full
group $C^*$-algebra, given any generating set $\mF$ of a discrete
group $G$, we associate another obvious operator system, namely,
$\mS_r(\mF) := \text{span}\{1, u, u^*:\ u \in \mF\} \subset
C^*_r(G)$. In view of \Cref{unitary-envelope}, analogous to
\Cref{gpenvelope} and the fact that $G$ is amenable if and only if
$C^*(G) = C^*_r(G)$ (\cite{lan, brownozawa}), we have:

\begin{proposition}\label{r-gpenvelope}
Let $\mF$ be a generating set of a discrete group $G$. Then,
\begin{enumerate}[(i)]
\item up to a $*$-isomorphism fixing the elements of $\mS_r(\mF)$, we
have $C^*_e(\mS_r(\mF)) = C^*_r(G)$.
\item $G$ is amenable if and only if the identity map on $\mF$
  extends to a complete order isomorphism between $\mS(\mF)$ and $
  \mS_r(\mF)$.
\end{enumerate}
\end{proposition}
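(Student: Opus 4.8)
The plan is to treat the two parts separately, with part (i) being an almost verbatim application of the machinery already assembled, and part (ii) being where the real content lies. For part (i), the key observation is that $\mS_r(\mF) = \operatorname{span}\{1,u,u^*: u\in\mF\}$ sits inside $C^*_r(G)$ and, by definition of a generating set, the unitaries $\{u:u\in\mF\}$ lying in $\mS_r(\mF)$ generate $C^*_r(G)$ as a $C^*$-algebra. Hence $\mS_r(\mF)$ contains enough unitaries of $C^*_r(G)$, and \Cref{unitary-envelope} immediately yields a $\ast$-isomorphism $C^*_e(\mS_r(\mF)) \cong C^*_r(G)$ fixing $\mS_r(\mF)$. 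This is the same argument that proves \Cref{gpenvelope}, just with the full group $C^*$-algebra replaced by the reduced one.

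For part (ii), recall that by \Cref{gpenvelope} and part (i) we have $C^*_e(\mS(\mF)) = C^*(G)$ and $C^*_e(\mS_r(\mF)) = C^*_r(G)$, and there is always the canonical surjective $\ast$-homomorphism $\lambda: C^*(G)\to C^*_r(G)$ (the left regular representation) which restricts to the identity on the generating unitaries, hence on $\mS(\mF)$ viewed as mapped onto $\mS_r(\mF)$. Lance's theorem (\cite{lan}, see also \cite{brownozawa}) tells us that $G$ is amenable if and only if $\lambda$ is a $\ast$-isomorphism, i.e.\ $C^*(G) = C^*_r(G)$. For the forward direction, if $G$ is amenable then $\lambda$ is a $\ast$-isomorphism carrying $\mS(\mF)$ onto $\mS_r(\mF)$, so its restriction is the desired complete order isomorphism extending the identity on $\mF$. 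For the converse, suppose the identity on $\mF$ extends to a complete order isomorphism $\theta:\mS(\mF)\to\mS_r(\mF)$. The idea is to promote $\theta$ to a $\ast$-isomorphism of the $C^*$-envelopes using the universal property. Concretely, $\theta^{-1}\circ(\text{restriction of }\lambda)$ is a unital completely positive map $C^*(G)\to\mS(\mF)$ that fixes $\mS(\mF)$ pointwise; by \Cref{rigidity}(i) (or rather its consequence that ucp self-maps of a $C^*$-cover fixing the operator system are the identity — applied after realizing $C^*(G) = C^*_e(\mS(\mF))$ concretely on a Hilbert space) this composition must be the identity on $C^*(G)$. Dually, applying the same reasoning with the roles reversed shows $\lambda$ is injective, hence $C^*(G) = C^*_r(G)$ and $G$ is amenable by Lance's theorem.

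The main obstacle is making the converse rigorous: one must be careful that a complete order isomorphism between the two operator systems, which a priori is only a statement about matrix order at the level of the $3$-dimensional-modulo-relations spaces $\mS(\mF)$ and $\mS_r(\mF)$, actually forces the enveloping $C^*$-algebras to coincide. The clean way is to invoke the rigidity of $C^*$-envelopes: a ucp map $C^*_e(\mS)\to C^*_e(\mS)$ (with $C^*_e(\mS)$ concretely represented, so that \Cref{rigidity}(i) applies) that fixes $\mS$ must fix all of $C^*_e(\mS)$. Applying this to the composite of $\lambda$ with a ucp extension of $\theta^{-1}$ (obtained by injectivity of $B(\mH)$ together with \Cref{rigidity}(ii) to keep it a complete order embedding on the nose) shows $\lambda$ has a left inverse, hence is injective. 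The only subtlety to watch is that $\theta$ must genuinely be realized as carrying $\mS(\mF)$ \emph{onto} $\mS_r(\mF)$ with matching generators, so that the composite maps are well-defined and fix the right subspace; this is exactly what "the identity map on $\mF$ extends" guarantees. An alternative, perhaps cleaner, route for the converse: a complete order isomorphism $\mS(\mF)\cong\mS_r(\mF)$ fixing $\mF$ induces, by functoriality of the $C^*$-envelope under complete order isomorphisms, a $\ast$-isomorphism $C^*_e(\mS(\mF))\cong C^*_e(\mS_r(\mF))$, i.e.\ $C^*(G)\cong C^*_r(G)$ via a map fixing the generating unitaries; composing with $\lambda$ and using that $\lambda$ is the unique such surjection (by the universal property in \Cref{gpenvelope}) forces $\lambda$ itself to be this isomorphism, and amenability follows.
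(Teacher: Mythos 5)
Your argument is correct and matches the paper's (implicit) proof: the paper states this proposition without further argument, as an immediate consequence of \Cref{unitary-envelope} for part (i) and of Lance's theorem that $G$ is amenable precisely when the canonical surjection $\lambda\colon C^*(G)\to C^*_r(G)$ is injective for part (ii), together with \Cref{gpenvelope} --- exactly the ingredients you use. The only blemish is your first formulation of the converse, where $\theta^{-1}\circ\lambda$ is not defined on all of $C^*(G)$ until $\theta^{-1}$ has been extended by injectivity of $B(\mathcal{H})$; since you repair this later in the same paragraph (and your alternative via functoriality of the $C^*$-envelope under unital complete order isomorphisms is the cleanest version), the proof stands.
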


Since finite dimensional $C^*$-algebras are nuclear and since
$C^*_r(\F_n)$ is exact (\cite{brownozawa}), by \Cref{stability},
\Cref{nuclear-C-cover}, \Cref{r-gpenvelope} and \Cref{unitary-envelope}, we observe that :

\begin{corollary}
\begin{enumerate}[(i)]
\item If $\mF$ is a generating set of a non-amenable discrete group
  $G$, then $\mS(\mF)$ and $\mS_r(\mF)$ do not possess nuclear
  $C^*$-covers.
 \item In particular, the finite dimensional operator systems $\mS(\mF)$ (e.g., $\mS_n$
   for $n \geq 2$) and $\mS_r(\mF)$, for $|\mF| < \infty$, do not admit 
   complete order embeddings into  finite dimensional $C^*$-algebras.
 \item If $\mF$ is a
 generating set of a free group with $2 \leq |\mF| \leq \infty$, then
 $\mS_r(\mF)$ is exact and does not have any nuclear $C^*$-cover.
\end{enumerate}
\end{corollary}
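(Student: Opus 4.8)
The plan is to derive all three parts from the earlier machinery on $C^*$-envelopes together with the fact (Lance's theorem) that a discrete group $G$ is amenable if and only if $C^*(G)$ is nuclear. For part $(i)$, suppose $G$ is non-amenable. By \Cref{gpenvelope}, $C^*_e(\mS(\mF)) = C^*(G)$, which is non-nuclear by Lance's theorem. If $\mS(\mF)$ had a nuclear $C^*$-cover, then by \Cref{nuclear-C-cover} its $C^*$-envelope $C^*(G)$ would be nuclear, a contradiction. The argument for $\mS_r(\mF)$ is identical using \Cref{r-gpenvelope}$(i)$: $C^*_e(\mS_r(\mF)) = C^*_r(G)$, and $C^*_r(G)$ is non-nuclear whenever $G$ is non-amenable (if $C^*_r(G)$ were nuclear, then since $G$ amenable $\iff C^*_r(G)$ nuclear, as recorded via \cite{lan, brownozawa}, we would again get a contradiction).

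For part $(ii)$, I would argue by contraposition. If $|\mF| < \infty$, then $\mS(\mF)$ (being the span of $1$ and the finitely many $u, u^*$ for $u \in \mF$) is finite dimensional; likewise $\mS_r(\mF)$. A complete order embedding of a finite dimensional operator system into a finite dimensional $C^*$-algebra $B$ would exhibit $B$ (or rather the $C^*$-subalgebra it generates inside $B$, which is still finite dimensional hence nuclear) as a nuclear $C^*$-cover. Since free groups $\F_n$ for $n \geq 2$ are non-amenable, and more generally any non-amenable $G$ with a finite generating set furnishes such a finite dimensional $\mS(\mF)$, part $(i)$ forbids this. The explicit mention of $\mS_n$ for $n \geq 2$ is just the instance $\mF$ = the universal generating set of $\F_n$, where $\dim \mS_n = 2n+1$.

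For part $(iii)$, let $\mF$ be a generating set of a free group $\F$ with $2 \leq |\mF| \leq \infty$. Exactness of $\mS_r(\mF)$ follows from \Cref{stability}$(ii)$: the reduced $C^*$-algebra $C^*_r(\F)$ is exact (\cite{brownozawa}), hence by \Cref{r-gpenvelope}$(i)$ combined with \Cref{unitary-envelope} we may regard $\mS_r(\mF)$ as an operator subsystem of the exact $C^*$-algebra $C^*_r(\F)$ — and here one should note a $C^*$-algebra is exact as an operator system precisely when it is exact as a $C^*$-algebra, so every operator subsystem, in particular $\mS_r(\mF)$, is exact. That $\mS_r(\mF)$ has no nuclear $C^*$-cover is then immediate from part $(i)$ since $\F$ is non-amenable.

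The only mild subtlety — the "main obstacle" if there is one — is making sure in part $(ii)$ that "complete order embedding into a finite dimensional $C^*$-algebra" really does produce a $C^*$-\emph{cover}: one passes to the $C^*$-subalgebra generated by the image, which remains finite dimensional (a subalgebra of a finite dimensional $C^*$-algebra) and therefore nuclear, so \Cref{nuclear-C-cover} applies. Everything else is a direct bookkeeping of the quoted results.
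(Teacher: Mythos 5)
Your proposal is correct and follows essentially the same route the paper intends: the paper derives the corollary by citing exactly the ingredients you use — \Cref{nuclear-C-cover} together with \Cref{gpenvelope}/\Cref{r-gpenvelope} (and \Cref{unitary-envelope}) plus Lance's theorem for $(i)$, nuclearity of finite dimensional $C^*$-algebras for $(ii)$, and exactness of $C^*_r(\F)$ with \Cref{stability}$(ii)$ for $(iii)$. Your extra care in $(ii)$ about passing to the generated (still finite dimensional, hence nuclear) $C^*$-subalgebra to obtain a genuine $C^*$-cover is exactly the implicit step in the paper's observation.
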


As in the case of group algebras, \Cref{gpenvelope} also suggests that
two non-isomorphic group operator systems can have isomorphic
$C^*$-envelopes: For example, consider the non-abelian groups $D_8$
(i.e., the Dihedral group of order $8$) with presentation $D_8 =
\langle a, b\; | \; a^2=b^4=1, bab=a \rangle$ and the Quaternion group
$Q_8$ with presentation $Q_8 =\langle x, y\; | \;x^4=1, x^2=y^2, xyx=y
\rangle $. It is well known that $$C^*(D_8)=\C \oplus \C \oplus \C
\oplus \C \oplus \M_2= C^*(Q_8)$$ and yet $D_8 \ncong Q_8$. Let $\mF=
\{a,b\}$ and $\mV=\{x,y\}$ be generating sets of $D_8$ and $Q_8$
respectively, then $\mF$ and $\mV$ are both minimal generating sets,
but $\mS(\mF) \ncong \mS(\mV)$ as $\dim(\mS(\mF))=4 \neq 5 =
\dim(\mS(\mV)).$

\section{A comparison between  $ess$ and other tensor products}\label{s:3}

Recall that, for operator systems $\mS$ and $\mathcal{T}$, analogous
to the commuting tensor product, their ess tensor product was
defined, in \cite{disgrp}, via the embedding $\mS \ot_{\mathrm{ess}}
\mathcal{T} \subset C^*_e(\mS) \ot_{\max} C^*_e(\mathcal{T})$. It was
also proved there, in Lemma $3.2$, that the operator systems
associated to free groups satisfy $\mS_n \ot_{\mathrm{ess}}
\mS_m=\mS_n \ot_{c} \mS_m $ for all $n, m \geq 2$. Analogous to this,
we will prove in this section that for operator systems associated
to amenable groups the $\mathrm{ess}$ tensor product is identical
with the maximal injective operator system tensor product e. Before
that, we first make some other useful observations about the tensor
product ess and compare it with other tensor products.

Analogous to the behavior of $\ot_{\mathrm{c}}$ in \cite[Theorem
  $6.6$]{KPTT1}, we have the following:
\begin{proposition} \label{ess-max}
For any two unital $C^*$-algebras $A$ and $B$, we have $$A \ot_{\mathrm{ess}} B
= A \ot_{\mathrm{c}} B= A \ot_{\max} B.$$
\end{proposition}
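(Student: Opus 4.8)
The plan is to establish the two equalities separately, relying on known facts about $\ot_{\mathrm c}$ and the universal property of $C^*$-envelopes. First, since $B$ is a $C^*$-algebra, \cite[Theorem $6.7$]{KPTT1} (invoked already in the proof of \Cref{non-nuclear}) gives $A \ot_{\mathrm c} B = A \ot_{\max} B$, so it remains to show $A \ot_{\mathrm{ess}} B = A \ot_{\mathrm c} B$. Because $A$ and $B$ are unital $C^*$-algebras, \Cref{rigidity}$(iii)$ yields $C^*_e(A) = A$ and $C^*_e(B) = B$ (a unital $C^*$-algebra is trivially unitally completely order isomorphic to itself), so by definition of the ess tensor product the embedding is $A \ot_{\mathrm{ess}} B \subset A \ot_{C^*\text{-}\max} B$, where on the right I mean the maximal $C^*$-tensor product.

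The inequality $\mathrm{c} \le \mathrm{ess}$ (in the sense of operator system tensor products, i.e. the identity map $A\ot_{\mathrm{ess}} B \ra A \ot_{\mathrm c} B$ is completely positive): this should follow because $A \ot_{C^*\text{-}\max} B$ is an essential-type $C^*$-cover, and the commuting tensor norm is the smallest operator system structure making all pairs of commuting unital completely positive maps from $A$ and $B$ jointly completely positive; the pair of inclusions $A, B \hookrightarrow A \ot_{C^*\text{-}\max} B$ is such a pair, giving a ucp map $A \ot_{\mathrm c} B \to A \ot_{C^*\text{-}\max} B$, hence the comparison $\mathrm c \le \mathrm{ess}$. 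Conversely, for $\mathrm{ess} \le \mathrm c$: by \cite[Theorem $6.7$]{KPTT1} again, $A \ot_{\mathrm c} B = A \ot_{\max} B \subset A \ot_{C^*\text{-}\max} B$ as operator systems (the maximal operator system tensor product of two $C^*$-algebras sits completely order isometrically inside their maximal $C^*$-tensor product — this is standard, e.g.\ via \cite[Lemma $2.5$]{KPTT2} applied to the canonical surjection, or directly from the definition of $\ot_{\max}$ together with Stinespring). Both $A \ot_{\mathrm c} B$ and $A \ot_{\mathrm{ess}} B$ therefore carry the operator system structure inherited from the \emph{same} $C^*$-algebra $A \ot_{C^*\text{-}\max} B$, and hence coincide as operator systems on $A \ot B$.

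The step I expect to be the main obstacle is pinning down cleanly that $A \ot_{\mathrm c} B$ embeds completely order isometrically into $A \ot_{C^*\text{-}\max} B$ — i.e.\ that the commuting operator system tensor structure is exactly the one induced from the maximal $C^*$-tensor product — rather than merely that there is a ucp surjection. One way to handle this without reproving \cite[Theorem $6.7$]{KPTT1} is: $C^*_{\max}(A \ot_{\mathrm c} B)$, the enveloping (universal) $C^*$-algebra of the operator system $A \ot_{\mathrm c} B$, is canonically $A \ot_{C^*\text{-}\max} B$ (this is essentially the content of \cite[Theorem $6.4$]{KPTT1} together with the description of $\ot_{\mathrm c}$ via jointly cp commuting representations), so $A \ot_{\mathrm c} B \subset C^*_{\max}(A \ot_{\mathrm c} B) = A \ot_{C^*\text{-}\max} B$ completely order isometrically; and similarly the ess structure is by definition the one induced from $C^*_e(A) \ot_{C^*\text{-}\max} C^*_e(B) = A \ot_{C^*\text{-}\max} B$. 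Since both operator system structures on $A\ot B$ are the restrictions of the matrix order of the single $C^*$-algebra $A \ot_{C^*\text{-}\max} B$, they agree, completing the proof.
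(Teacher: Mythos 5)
Your proposal is correct and follows essentially the paper's own route: identify $C^*_e(A)=A$ and $C^*_e(B)=B$ (the paper gets this from the Choi--Effros $C^*$-structure on $I(A)$, you from \Cref{rigidity}$(iii)$ --- both legitimate), then combine the definition of $\ot_{\mathrm{ess}}$ with $A\ot_{\mathrm c}B=A\ot_{\max}B$ and the fact that $A\ot_{\max}B$ is the operator system structure induced from $A\ot_{C^*\text{-}\max}B$ (\cite[Theorem 5.12]{KPTT1}). One harmless slip: under the paper's convention ($\sigma\le\tau$ iff the identity $\mS\ot_\tau\mT\to\mS\ot_\sigma\mT$ is cp), your ucp map $A\ot_{\mathrm c}B\to A\ot_{C^*\text{-}\max}B$ actually witnesses $\mathrm{ess}\le\mathrm c$ rather than $\mathrm c\le\mathrm{ess}$, but your concluding observation that both structures are the restriction of the matrix order of the same $C^*$-algebra renders that labeling irrelevant.
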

\begin{proof}
By \cite[Theorem $6.2.4$]{effros-ruan}, the injective envelope $I(A)$
has a canonical $C^*$-algebraic structure, and the mapping $i_{A}: A
\ra I(A)$ is a $C^*$-algebraic isomorphism onto its image. In
particular, we can assume that $A \subseteq I(A)$ and since the
$C^*$-envelope of $A$ is the $C^*$-algebra generated by $A$ in its
injective envelope $I(A)$, we have $C^*_e(A)=A$. Similarly
$C^*_e(B)=B$.  Hence, by the definition of $\ot_{\mathrm{ess}}$ and
the fact that $ A \ot_{c} B =A \ot_{\max} B$ (\cite[Theorem
  $6.6$]{KPTT1}), the assertion holds.
\end{proof}

Recall, from \cite[$\S 3$]{KPTT1}, that for two operator system tensor
products $\sigma$ and $\tau$, one says that $\sigma \leq \tau$ if for
any two operator systems $\mS$ and $\mT$ the identity map from $\mS
\ot_{\tau}\mT$ onto $\mS \ot_{\sigma}\mT$ is completely positive. With
this notion, the following lattice structure on operator
system tensor products is known (\cite{KPTT1, KPTT2, kavnuc}):
$$ \min \leq e \leq \mathrm{el}, \mathrm{er} \leq \mathrm{c} \leq \max.$$
Also, it can be easily seen that for three operator system tensor
products $\sigma \leq \tau \leq \rho$, an operator system $\mS$ is
$(\sigma, \rho)$-nuclear if and only if it is $(\sigma, \tau)$- and
$(\tau, \rho)$-nuclear.

Further, an operator system tensor product $\alpha$ is said to be
\emph{functorial} if for operator systems $\mS_1, \mS_2, \mT_1 \:
\text{and} \: \mT_2$ and unital completely positive maps $\phi : \mS_1
\ra \mS_2$ and $\psi : \mT_1 \ra \mT_2$ the associated map $\phi \ot
\psi : \mS_1 \ot_{\alpha} \mT_1 \ra \mS_2 \ot_{\alpha} \mT_2$ is
unital completely positive (\cite{KPTT1}). A tensor product $\alpha$
is said to be \emph{symmetric} if the flip map $\theta : s \ot t \mapsto t
\ot s$ extends to unital complete order isomorphism from $\mS
\ot_{\alpha} \mT$ onto $\mT \ot_{\alpha} \mS$, and \emph{associative}
if the natural isomorphism from $(\mS \ot \mT ) \ot \mathcal{R} $ onto
$\mS \ot (\mT \ot \mathcal{R})$ yields a complete order isomorphism
from $(\mS \ot_{\alpha} \mT ) \ot_{\alpha} \mathcal{R}$ onto $ \mS
\ot_{\alpha} (\mT \ot_{\alpha} \mathcal{R} )$ for all operator systems
$\mS, \mT$ and $\mathcal{R}$.

\begin{proposition}
$\mathrm{ess}$ is symmetric and is not functorial .
\end{proposition}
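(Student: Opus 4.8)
The plan is to establish the two assertions separately, each reducing to facts already recorded in the excerpt.

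For symmetry of $\mathrm{ess}$: I would start from the defining embedding $\mS \ot_{\mathrm{ess}} \mT \subseteq C^*_e(\mS) \ot_{C^*\text{-}\max} C^*_e(\mT)$ and use the fact that the maximal $C^*$-tensor product is symmetric, i.e.\ the flip $x \ot y \mapsto y \ot x$ extends to a $\ast$-isomorphism $C^*_e(\mS) \ot_{C^*\text{-}\max} C^*_e(\mT) \cong C^*_e(\mT) \ot_{C^*\text{-}\max} C^*_e(\mS)$. Under this $\ast$-isomorphism the operator subsystem $\mS \ot \mT$ is carried onto $\mT \ot \mS$ (the generating subspaces are interchanged), and a $\ast$-isomorphism restricts to a complete order isomorphism on operator subsystems. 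Hence the flip map $\theta : \mS \ot_{\mathrm{ess}} \mT \ra \mT \ot_{\mathrm{ess}} \mS$ is a unital complete order isomorphism, which is exactly symmetry. This step is routine; the only point to be careful about is that ``ess'' is defined via the $C^*$-envelopes rather than arbitrary $C^*$-covers, so the naturality of the flip on the envelopes is all that is needed.

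For non-functoriality: the strategy is to exhibit operator systems $\mS_1, \mS_2$ and a unital completely positive map $\phi : \mS_1 \ra \mS_2$ (take $\mT_1 = \mT_2 = \mT$ and $\psi = \mathrm{id}_{\mT}$) for which $\phi \ot \mathrm{id}_{\mT} : \mS_1 \ot_{\mathrm{ess}} \mT \ra \mS_2 \ot_{\mathrm{ess}} \mT$ fails to be completely positive. The natural candidate is to let $\mS_2$ be an operator system whose $C^*$-envelope is badly behaved and $\mS_1$ a $C^*$-algebra (so $C^*_e(\mS_1) = \mS_1$ by \Cref{rigidity}(iii), giving a simple description of $\mS_1 \ot_{\mathrm{ess}} \mT = \mS_1 \ot_{C^*\text{-}\max} C^*_e(\mT)|_{\mS_1 \ot \mT}$). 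A clean way to produce a contradiction is a dimension/surjectivity argument: if $\mathrm{ess}$ were functorial, then since every operator system $\mS$ admits a unital completely positive map onto it from a suitable $C^*$-algebra (e.g.\ $\mS$ is a quotient, in the operator-system sense, of its universal $C^*$-algebra $C^*_u(\mS)$, or one uses the unital completely positive projection from $C^*_e(\mS)$-related data), functoriality would force $\mS \ot_{\mathrm{ess}} \mT$ to be a ``quotient'' of a $C^*$-tensor product in a way that collapses ess onto a smaller tensor product, contradicting, e.g., $\mS_n \ot_{\mathrm{ess}} \mS_m = \mS_n \ot_{\mathrm{c}} \mS_m$ together with the known failure of $(\min,\mathrm{c})$-nuclearity. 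Concretely I would compare $\mathrm{ess}$ with $\min$: $\min$ is functorial, and if $\mathrm{ess}$ were also functorial one could, using a surjection from a nuclear or exact $C^*$-algebra in the first variable together with injectivity of $\min$, sandwich ess between $\min$ and itself on a finite-dimensional example where ess is known to be strictly larger than $\min$ (for instance via $\mS_n$, $n \ge 2$, which is not exact, hence not $(\min,\mathrm{el})$-nuclear, and $\mathrm{ess} \ge \mathrm{el}$ fails—so ess is not comparable to el the way a functorial tensor product would be forced to be).

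The main obstacle is pinning down the exact counterexample witnessing non-functoriality: I expect the cleanest route is to take a unital completely positive map $\phi : A \ra \mS$ from a $C^*$-algebra $A$ onto an operator system $\mS$ with non-nuclear $C^*$-envelope (such $\mS$, e.g.\ $\mS_n$ for $n\ge 2$, exist by the discussion in Section~1), fix $\mT = \mS$, and observe that functoriality of $\mathrm{ess}$ would give a completely positive surjection $A \ot_{\mathrm{ess}} \mS \ra \mS \ot_{\mathrm{ess}} \mS$; since $A \ot_{\mathrm{ess}} \mS = A \ot_{C^*\text{-}\max} C^*_e(\mS)$-restricted is governed by a nuclear/exact factor while $\mS \ot_{\mathrm{ess}} \mS = \mS \ot_{\mathrm{c}} \mS$ is not $(\min,\mathrm{c})$-nuclear, one derives that $\mathrm{ess} = \min$ on $\mS \ot \mS$, contradicting non-exactness of $\mS$. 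Making this sandwich precise—ensuring the intermediate maps are genuinely completely positive and that the collapse really occurs—is the delicate part; once it is set up, the contradiction with \cite[$\S 10$]{kavnuc} (that $\mS_n$ is not exact, a fortiori not $(\min,\mathrm{ess})$-nuclear since $\mathrm{ess} \ge \mathrm{el}$) closes the argument.
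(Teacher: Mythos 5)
Your symmetry argument is correct and is essentially the paper's: the flip on $C^*_e(\mS)\ot_{\max}C^*_e(\mT)$ is a complete order isomorphism onto $C^*_e(\mT)\ot_{\max}C^*_e(\mS)$ carrying $\mS\ot\mT$ onto $\mT\ot\mS$, and restricting it gives symmetry of $\ot_{\mathrm{ess}}$ by the very definition of this tensor product.

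The non-functoriality half, however, is not a proof as written; the ``delicate part'' you flag is exactly where it breaks. Suppose $\phi:A\ra\mS$ is a unital completely positive surjection from a nuclear $C^*$-algebra and fix $\mT$. Granting functoriality of $\mathrm{ess}$, you do get completely positive maps $\phi\ot\mathrm{id}:A\ot_{\min}\mT\ra\mS\ot_{\min}\mT$ and $\phi\ot\mathrm{id}:A\ot_{\mathrm{ess}}\mT\ra\mS\ot_{\mathrm{ess}}\mT$, and indeed $A\ot_{\min}\mT=A\ot_{\mathrm{ess}}\mT$ because $C^*_e(A)=A$ is nuclear. But to conclude that the identity $\mS\ot_{\min}\mT\ra\mS\ot_{\mathrm{ess}}\mT$ is completely positive (your ``collapse'' of ess onto min) you must lift matricial positives of $\mS\ot_{\min}\mT$ to matricial positives of $A\ot_{\min}\mT$; that is, $\phi$ must be a complete order quotient map and must remain one after tensoring with $\mT$ in the minimal tensor product. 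Neither holds for a general ucp surjection, and nothing in the proposal supplies it--this is precisely the lifting/exactness subtlety the operator system quotient theory exists to control. (Your fallback that every operator system is an operator-system quotient of $C^*_u(\mS)$ is also unjustified.) The closing contradiction is equally shaky: you invoke $\mathrm{el}\leq\mathrm{ess}$ to pass from non-exactness of $\mS_n$ to failure of $(\min,\mathrm{ess})$-nuclearity, but the paper explicitly states that it is not known whether $\mathrm{el}\leq\mathrm{ess}$; the implication ``$(\min,\mathrm{ess})$-nuclear $\Rightarrow$ $(\min,\mathrm{el})$-nuclear'' is obtained only later (\Cref{ess-to-el}) via a nontrivial biproximinality argument, and the non-$(\min,\mathrm{ess})$-nuclearity of $\mS_n$ rests on the later \Cref{mainresult2}, neither of which you may assume here.

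For comparison, the paper's proof of non-functoriality needs no counterexample at this stage: by \Cref{ess-max}, $\mathrm{ess}$ agrees with $\max$ (hence with $\mathrm{c}$) on pairs of unital $C^*$-algebras, one has $\mathrm{ess}\leq\mathrm{c}$, and by \cite[Theorem 6.7]{KPTT1} $\mathrm{c}$ is the minimal symmetric and functorial tensor product with this agreement property; so if $\mathrm{ess}$ were functorial, minimality together with the symmetry just proved would force $\mathrm{c}\leq\mathrm{ess}$, i.e.\ $\mathrm{ess}=\mathrm{c}$, which fails (for instance, results later in the paper exhibit $(\min,\mathrm{ess})$-nuclear systems that are not $(\min,\mathrm{c})$-nuclear). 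If you prefer a counterexample-style argument, you would at minimum have to replace your ucp surjection by a genuine complete order quotient map whose quotient property is stable under $\ot_{\min}\mT$, which is a substantial additional input.
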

\begin{proof} 
Since $\max$ is symmetric (\cite[Theorem $5.5$]{KPTT1}), the flip map 
$$ C^*_e(\mS) \ot_{\max} C^*_e(\mT) \ni a\ot b \stackrel{\Phi}{
  \longrightarrow} b \ot a \in C^*_e(\mT) \ot_{\max} C^*_e(\mS)$$ extends to a
complete order isomorphism for any two operator systems $\mS$ and
$\mT$ and, by definition of $\ot_{\mathrm{ess}}$, the restriction of
$\Phi$ to $\mS \ot \mT$ implies the symmetry for $\ot_{\mathrm{ess}}$.

Then, in view of \Cref{ess-max}, the facts that $\mathrm{c}$ is the
minimal symmetric and functorial extension of max which agrees with
max on $C^*$-algebras (\cite[Theorem $6.7$]{KPTT1}) and that
$\mathrm{ess} \leq \mathrm{c}$ (\cite[$\S 2$]{disgrp}) imply that ess
is not functorial. \end{proof} 

Like the tensor product ${\text{c}}$ it is not known whether
${\text{ess}}$ is associative or not. As applications of our main
results, we will be able to make some more significant comparisons
between ess and other tensor products in the following Section.

\vspace*{1mm} We now review some basic facts about dual of an operator
system from \cite{choi, KPTT2}: \\ For an operator system $\mS$, the
Banach space dual $\mS^d$ is a matrix ordered space with
ordering: $$M_n(\mS^d) \ni (f_{ij}) \geq 0 \ \ \text{if and only
  if} $$ $$F: \mS \ra M_n\; \text{given by}\; F(s)=(f_{ij}(s))\;
\text{is completely positive.}$$ But $\mS^d$ need not be an operator
system, as it might not have an Archimedean ordered unit. However, if
$\mS$ is finite dimensional, the dual $\mS^d$ possesses an Archimedean
order unit and hence admits an operator system structure
(\cite[Corollary $4.5$]{choi}).  It was shown in \cite[Proposition
  6.2]{KPTT2} that $\mS^{dd}$ is always an operator system with a
canonical Archimedean ordered unit and the canonical inclusion $\mS
\subset \mS^{dd}$ is a complete order embedding.

As in (\cite[Definition $6.4$]{KPTT2}), an operator system $\mS$
is said to have \textit{the Weak Expectation Property (WEP)} if
there exits a complete order embedding $\mS \subset B(H)$ such that the canonical
map $\iota : \mS \ra \mS^{dd}$ extends to a completely positive map
$\tilde{\iota} : B(H) \ra \mS^{dd}$. It is known that $\mS$ has the
WEP if and only if it is $(\mathrm{el}, \max)$-nuclear (\cite[$\S 6$]{KPTT2} , \cite[$\S 4$]{han}).

\begin{proposition}\label{ess-e1}
Let  $\mS$ and $\mT$ be operator systems whose $C^*$-envelopes possess
the WEP. Then, $\mS \ot_{\mathrm{ess}} \mT=\mS \ot_{\mathrm{e}} \mT$.
\end{proposition}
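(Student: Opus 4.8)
The plan is to deduce the equality from the observation that, under the hypothesis, the two ambient $C^*$-algebras defining $\mathrm{ess}$ and $\mathrm{e}$ sit inside one another. Recall that $\mS \ot_{\mathrm{ess}} \mT$ is $\mS \ot \mT$ carrying the operator system structure inherited from $C^*_e(\mS) \ot_{\max} C^*_e(\mT)$, while $\mS \ot_{\mathrm{e}} \mT$ is $\mS \ot \mT$ with the structure inherited from $I(\mS) \ot_{\max} I(\mT)$. The unital inclusions $C^*_e(\mS) \subseteq I(\mS)$ and $C^*_e(\mT) \subseteq I(\mT)$ induce, by functoriality of $\ot_{\max}$, a unital completely positive map
$$\Pi\colon C^*_e(\mS) \ot_{\max} C^*_e(\mT) \ra I(\mS) \ot_{\max} I(\mT)$$
that restricts to the identity on $\mS \ot \mT$ (so that, in particular, $\mathrm{e} \leq \mathrm{ess}$ for every pair of operator systems). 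Hence it suffices to show that, when both $C^*$-envelopes have the WEP, $\Pi$ is a complete order embedding; for then the two inherited structures on $\mS \ot \mT$ coincide and $\mS \ot_{\mathrm{ess}} \mT = \mS \ot_{\mathrm{e}} \mT$.

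To produce the embedding I would enlarge the two tensor factors one at a time. Since $C^*_e(\mS)$ has the WEP it is $(\mathrm{el}, \max)$-nuclear, so $C^*_e(\mS) \ot_{\max} C^*_e(\mT) = C^*_e(\mS) \ot_{\mathrm{el}} C^*_e(\mT)$; and since $C^*_e(\mS) \subseteq I(\mS)$ with $I(\mS)$ injective, the very definition of $\ot_{\mathrm{el}}$ gives a complete order embedding $C^*_e(\mS) \ot_{\mathrm{el}} C^*_e(\mT) \hookrightarrow I(\mS) \ot_{\max} C^*_e(\mT)$. Thus $C^*_e(\mS) \ot_{\max} C^*_e(\mT) \hookrightarrow I(\mS) \ot_{\max} C^*_e(\mT)$. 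Running the same argument on the second factor --- now using that $C^*_e(\mT)$ has the WEP, that the flip map identifies $\ot_{\mathrm{el}}$ with $\ot_{\mathrm{er}}$, and that $C^*_e(\mT) \subseteq I(\mT)$ with $I(\mT)$ injective --- yields $I(\mS) \ot_{\max} C^*_e(\mT) = I(\mS) \ot_{\mathrm{er}} C^*_e(\mT) \hookrightarrow I(\mS) \ot_{\max} I(\mT)$. Composing the two embeddings shows that $\Pi$ is a complete order embedding, which finishes the proof.

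The point that needs care --- and the only place the WEP hypothesis is actually used --- is that $\ot_{\max}$ is not an injective tensor product, so one cannot replace $C^*_e(\mS)$ by $I(\mS)$ and $C^*_e(\mT)$ by $I(\mT)$ simultaneously for free; the $(\mathrm{el}, \max)$- and $(\mathrm{er}, \max)$-nuclearity supplied by the WEP of each envelope is precisely what turns each single-factor enlargement into a complete order embedding. The same step can instead be phrased purely $C^*$-algebraically: the WEP of $C^*_e(\mS)$ makes it relatively weakly injective in the $C^*$-algebra $I(\mS)$, hence $C^*_e(\mS) \ot_{C^*\text{-}\max} D \hookrightarrow I(\mS) \ot_{C^*\text{-}\max} D$ for every unital $C^*$-algebra $D$, and likewise for $C^*_e(\mT)$, which again yields the embedding $\Pi$ after composing. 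It is also worth checking at the outset that $\mathrm{ess}$ and $\mathrm{e}$ really are the inherited structures described above, but this is exactly how the two tensor products are defined, so no work is required there.
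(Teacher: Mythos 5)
Your proof is correct and takes essentially the same route as the paper: the paper applies Lance's characterization of the WEP (max-relative weak injectivity) to each envelope in turn to get the complete order embedding $C^*_e(\mS) \ot_{\max} C^*_e(\mT) \subseteq I(\mS) \ot_{\max} I(\mT)$ and then restricts to $\mS \ot \mT$, which is precisely your one-factor-at-a-time argument and coincides with the $C^*$-algebraic phrasing you give at the end. Your primary formulation via $(\mathrm{el},\max)$-nuclearity and the definitions of $\ot_{\mathrm{el}}$, $\ot_{\mathrm{er}}$ is just a repackaging of that same step (using $I(C^*_e(\mS)) = I(\mS)$), so no genuinely different machinery is involved.
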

\begin{proof} 
By \cite[Theorem $6.9$]{KPTT2} (also see \cite[Corollary $4.2$]{han1}
and \cite[Corollary $3.6.8$]{brownozawa}), a unital $C^*$-algebra $A$
possesses the WEP if and only if $A \ot_{\max} B \subseteq A_1
\ot_{\max} B$ for any inclusion $A \subseteq A_1$ and any unital
$C^*$-algebra $B$ (Lance's characterization
of WEP). Thus,
$$ \mS \ot_{\mathrm{ess}} \mT  \subseteq  C^*_e(\mS) \ot_{\max} C^*_e(\mT)
  \subseteq  I(\mS) \ot_{\max} I(\mT) 
$$ and consequently $\mS \ot_{\mathrm{ess}} \mT=\mS \ot_{\mathrm{e}} \mT$.
\end{proof}

This allows us to deduce what we had promised at the beginning of this
section, i.e.,  analogous to \cite[Lemma $3.2$]{disgrp}, for amenable groups,
we have:

\begin{corollary} \label{ess-e}
Let $\mF$ and $\mV$ be generating sets of amenable discrete groups $G$
and $H$ respectively. Then, $\mS(\mF)
\ot_{\mathrm{ess}} \mS(\mV) = \mS(\mF) \ot_{\mathrm{e}} \mS(\mV)$.
\end{corollary}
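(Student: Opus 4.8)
The plan is to deduce this directly from \Cref{ess-e1}, so the entire content of the argument is to verify that $C^*_e(\mS(\mF))$ and $C^*_e(\mS(\mV))$ possess the WEP. First I would recall from \Cref{gpenvelope} that, up to a $*$-isomorphism fixing the respective operator systems, one has $C^*_e(\mS(\mF)) = C^*(G)$ and $C^*_e(\mS(\mV)) = C^*(H)$. Since $G$ and $H$ are amenable, Lance's theorem (\cite{lan}) gives that the full group $C^*$-algebras $C^*(G)$ and $C^*(H)$ are nuclear.

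Next I would invoke the standard fact that every nuclear $C^*$-algebra has the WEP: if $A$ is nuclear then $A \ot_{C^*\text{-}{\max}} B = A \ot_{C^*\text{-}{\min}} B$ for every unital $C^*$-algebra $B$, and since $\ot_{C^*\text{-}{\min}}$ is injective, the characterization of the WEP used in the proof of \Cref{ess-e1} (i.e., \cite[Theorem $6.9$]{KPTT2}, equivalently \cite[Corollary $3.6.8$]{brownozawa}) is satisfied; hence $C^*_e(\mS(\mF))$ and $C^*_e(\mS(\mV))$ have the WEP. Applying \Cref{ess-e1} with $\mS = \mS(\mF)$ and $\mT = \mS(\mV)$ then yields $\mS(\mF) \ot_{\mathrm{ess}} \mS(\mV) = \mS(\mF) \ot_{\mathrm{e}} \mS(\mV)$.

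There is no genuine obstacle here: the statement is an immediate corollary of \Cref{ess-e1} and \Cref{gpenvelope} once the chain ``amenable $\Rightarrow$ group $C^*$-algebra nuclear $\Rightarrow$ WEP'' is recorded. The only point worth a word of care is that the WEP hypothesis of \Cref{ess-e1} is precisely the $C^*$-algebraic one entering Lance's characterization, which is exactly what nuclearity of $C^*(G)$ and $C^*(H)$ supplies.
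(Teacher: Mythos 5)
Your proposal is correct and follows exactly the paper's route: identify the $C^*$-envelopes as $C^*(G)$ and $C^*(H)$ via \Cref{gpenvelope}, use amenability to get nuclearity and hence the WEP of these envelopes, and then apply \Cref{ess-e1}. The only difference is that you spell out the standard implication ``nuclear $\Rightarrow$ WEP'' via Lance's characterization, where the paper simply cites it.
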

\begin{proof}
Since the full group $C^*$-algebras of amenable groups are nuclear,
and the fact that every nuclear $C^*$-algebra has the WEP (\cite[$\S
  17$]{pisier}), by \Cref{gpenvelope} and \Cref{ess-e1} proof follows.
\end{proof}
In view of \Cref{r-gpenvelope}, \Cref{ess-e} holds for $\mS_r(\mF)$
and $\mS_r(\mV)$ as well.  In \cite{KPTT2}, a generalization of the
notion of WEP was introduced and was called \textit{the Double
  Commutant Expectation Property (DCEP)}. An operator system $\mS$ is
said to have the DCEP if for every complete order embedding $\mS
\subset B(H)$ there exists a completely positive map $\varphi : B(H)
\ra \mS''$ fixing $\mS$.
\begin{proposition} 
Let $A$ be a unital $C^*$-algebra, then for every operator system
$\mS$ possessing the DCEP, we have $$\mS \ot_{\mathrm{ess}} A =\mS \ot_{\mathrm{el}}
A = \mS \ot_{\max} A.$$
\end{proposition}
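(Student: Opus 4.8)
The plan is to prove the chain of equalities by sandwiching $\mS \ot_{\mathrm{ess}} A$ between $\mS \ot_{\mathrm{el}} A$ and $\mS \ot_{\max} A$, using that $\mathrm{el} \leq \mathrm{ess} \leq \max$ always holds (the first from $\mathrm{el}\le\mathrm c$ and $\mathrm{ess}\le\mathrm c$ together with... actually more directly from the lattice $\min\le\mathrm e\le\mathrm{el},\mathrm{er}\le\mathrm c\le\max$ and $\mathrm{ess}\le\mathrm c$, plus the known inequality $\mathrm{el}\le\mathrm{ess}$ coming from the fact that $\mS\ot_{\mathrm{ess}}A\subseteq C^*_e(\mS)\ot_{\max}A$ and $\mathrm{el}$ is characterized as the tensor product detected by embeddings into injective envelopes). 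Since these three tensor products sit in order, by the remark on three ordered tensor products it suffices to show the two outer ones agree, i.e. that $\mS \ot_{\mathrm{el}} A = \mS \ot_{\max} A$.

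First I would invoke the DCEP hypothesis on $\mS$. The key known fact I will use is that an operator system $\mS$ has the DCEP if and only if $\mS \ot_{\mathrm{el}} B = \mS \ot_{\max} B$ for every \emph{$C^*$-algebra} $B$ — equivalently, $\mS$ has the DCEP iff $\mS$ is $(\mathrm{el},\max)$-nuclear when one of the factors is restricted to be a $C^*$-algebra. This is exactly the content of the DCEP-versus-WEP comparison from \cite{KPTT2} (DCEP is the "tensor with $C^*$-algebras" version of WEP, just as WEP $=(\mathrm{el},\max)$-nuclearity for all operator systems). Applying this with $B = A$ gives $\mS \ot_{\mathrm{el}} A = \mS \ot_{\max} A$ directly.

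Then I would assemble the argument: from $\mathrm{el}\le\mathrm{ess}\le\max$ and $\mS\ot_{\mathrm{el}}A = \mS\ot_{\max}A$, the identity maps
\[
\mS\ot_{\max}A \longrightarrow \mS\ot_{\mathrm{ess}}A \longrightarrow \mS\ot_{\mathrm{el}}A
\]
are all unital completely positive with the composite a complete order isomorphism, forcing each arrow to be a complete order isomorphism. Hence $\mS \ot_{\mathrm{ess}} A = \mS \ot_{\mathrm{el}} A = \mS \ot_{\max} A$, as claimed.

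The main obstacle I anticipate is pinning down and correctly citing the characterization "DCEP $\iff$ $(\mathrm{el},\max)$-nuclear against $C^*$-algebras," and in particular verifying the inequality $\mathrm{el}\le\mathrm{ess}$ rather than just $\mathrm{ess}\le\mathrm{c}$; for the latter one should note that $C^*_e(\mS)\ot_{\max}C^*_e(A) = C^*_e(\mS)\ot_{\max}A$ sits inside $I(\mS)\ot_{\max}I(A)$ only when $C^*_e(\mS)$ has a suitable expectation property, so instead I would get $\mathrm{el}\le\mathrm{ess}$ from the universal/functorial description of $\mathrm{el}$ as the smallest tensor product with $\mS\ot_{\mathrm{el}}\mT\subseteq I(\mS)\ot_{\max}\mT$ — or, more safely, simply cite the established lattice position of $\mathrm{ess}$ relative to $\mathrm{el}$ from \cite{disgrp}. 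Everything else is a formal sandwich argument.
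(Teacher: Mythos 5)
Your sandwich strategy can be made to work, but as written it rests on a claim that is not available: you assert that $\mathrm{el} \leq \mathrm{ess}$ ``always holds'' and, as your safe fallback, propose to cite this lattice position from \cite{disgrp}. No such result exists; indeed this paper explicitly records that it is not known whether $\mathrm{el} \leq \mathrm{ess}$ in general (only $\mathrm{ess} \leq \mathrm{c} \leq \max$ is known). Your alternative justification, describing el as ``the smallest tensor product with $\mS \ot_{\mathrm{el}} \mT \subseteq I(\mS)\ot_{\max}\mT$,'' is not a correct characterization either (el is simply the structure induced by that inclusion), and the general comparison genuinely breaks down because ess puts $C^*_e(\mT)$ in the second factor while el keeps $\mT$ itself, and there is no ucp map $C^*_e(\mT)\ra\mT$ fixing $\mT$ in general. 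What rescues your argument is the special shape of the statement: the second factor is a unital $C^*$-algebra $A$, so $C^*_e(A)=A$, and functoriality of $\ot_{\max}$ applied to the inclusion $C^*_e(\mS)\subseteq I(\mS)$ together with $\mathrm{id}_A$ gives a ucp map $C^*_e(\mS)\ot_{\max}A \ra I(\mS)\ot_{\max}A$; restricting it to $\mS\ot A$ shows the identity $\mS\ot_{\mathrm{ess}}A \ra \mS\ot_{\mathrm{el}}A$ is completely positive. With that one-line repair, your remaining steps are sound: $\mathrm{ess}\leq\mathrm{c}\leq\max$ gives the upper comparison, and DCEP $\Leftrightarrow$ $(\mathrm{el},\mathrm{c})$-nuclearity combined with $\mathrm{c}=\max$ against $C^*$-algebra factors gives $\mS\ot_{\mathrm{el}}A=\mS\ot_{\max}A$, so the sandwich closes.

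For comparison, the paper's proof avoids the el-versus-ess comparison altogether. It invokes the characterization of the DCEP from \cite{KPTT2} that $\mS\ot_{\mathrm{c}}\mathcal{R}$ embeds completely order isomorphically into $\mS_1\ot_{\mathrm{c}}\mathcal{R}$ for every operator system $\mS_1\supseteq\mS$, applied with $\mS_1=C^*_e(\mS)$ and $\mathcal{R}=A$; since $\mathrm{c}=\max$ when one factor is a $C^*$-algebra and $C^*_e(A)=A$, this identifies $\mS\ot_{\max}A=\mS\ot_{\mathrm{c}}A$ with the structure induced from $C^*_e(\mS)\ot_{\max}A$, which is by definition $\mS\ot_{\mathrm{ess}}A$; the el equality then follows from $(\mathrm{el},\mathrm{c})$-nuclearity. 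Both routes draw on the same KPTT2 input, but the paper obtains $\mathrm{ess}=\max$ in one step without needing any a priori position of ess relative to el, whereas your route needs the (easy, but case-specific) functoriality argument above and cannot lean on a general lattice inequality.
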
 

\begin{proof} 
By \cite[Theorems $7.1$ and $7.3$]{KPTT2}, an operator system $\mS$
has the DCEP if and only if for every operator system $\mathcal{S}_1$
with $\mS \subseteq \mS_1$ and any operator system $\mathcal{R}$ we
have $\mS \ot_{\mathrm{c}} \mathcal{R} \subseteq_{\mathrm{coe}} \mT
\ot_{\mathrm{c}} \mathcal{R}$.  In particular, by \cite[Theorem
  $6.7$]{KPTT1} and the fact that $A = C^*_e(A)$ (as observed in
\Cref{ess-max}), we have $$\mS \ot_{\max} A = \mS \ot_{\mathrm{c}} A
\subset C^*_e(\mS) \ot_{\mathrm{c}} A = C^*_e(\mS) \ot_{\max} A
\supset \mS \ot_{\mathrm{ess}} A.$$ Also, by \cite[Theorem
  $7.3$]{KPTT2} again, $\mS$ has the DCEP if and only if it is
$(\mathrm{el}, \mathrm{c})$-nuclear and we are done.
\end{proof}

\section{Nuclearity of an operator system via its $C^*$-envelope}\label{s:4}
 In this section we compare various notions of nuclearity  of operator
 systems with their $C^*$-envelopes.
 \vspace*{1mm}
 
Kirchberg and Wassermann (\cite{kirchberg}) gave
an example of a $(\min, \max)$-nuclear operator system whose
$C^*$-envelope, as observed by Kavruk in \cite[$\S 6$]{kavnuc}, is
non-exact and hence non-nuclear. The other direction, in general, is equally mysterious.

\begin{proposition}\label{notnuclear1} $(\min,
\mathrm{c})$-nuclearity, $(\min, \mathrm{er})$-nuclearity and
$(\mathrm{el}, \mathrm{c})$-nuclearity do not pass to an operator system
from its $C^*$-envelope.
\end{proposition}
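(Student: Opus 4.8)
The plan is to refute all three implications at once by producing a single operator system $\mathcal{S}$ whose $C^*$-envelope is nuclear --- so that $C^*_e(\mathcal{S})$, being a nuclear $C^*$-algebra, is automatically $(\min,\mathrm{c})$-, $(\min,\mathrm{er})$- and $(\mathrm{el},\mathrm{c})$-nuclear --- while $\mathcal{S}$ itself fails every one of these. First I would record the reductions coming from the intrinsic characterizations. Since $\min\le\mathrm{el}\le\mathrm{c}$, the transitivity remark together with \Cref{stability} shows that $\mathcal{S}$ is $(\min,\mathrm{c})$-nuclear (equivalently $C^*$-nuclear) if and only if it is both exact and has the DCEP, that $\mathcal{S}$ is $(\mathrm{el},\mathrm{c})$-nuclear if and only if it has the DCEP, and (by \cite{kavnuc}) that $\mathcal{S}$ is $(\min,\mathrm{er})$-nuclear if and only if it has the operator system local lifting property (OSLLP). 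Note that exactness is exactly the one property among these that \emph{does} pass from the $C^*$-envelope: a nuclear $C^*$-algebra is exact, and exactness passes to the operator subsystem $\mathcal{S}\subseteq C^*_e(\mathcal{S})$ by \Cref{stability}$(ii)$ --- which is precisely why $(\min,\mathrm{el})$-nuclearity is absent from the statement. Hence it suffices to exhibit an exact operator system with nuclear $C^*$-envelope that has neither the DCEP nor the OSLLP.

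For the witness I would take a graph operator system, concretely the pentagon operator system $\mathcal{S}_{C_5}\subseteq\M_5$ (the span of the diagonal matrix units together with the $E_{ij}$ for $i$ adjacent to $j$ in the $5$-cycle). Because $C_5$ is connected, the matrix units $E_{ij}$ lying in $\mathcal{S}_{C_5}$ generate the simple algebra $\M_5$, so $\M_5$ is a $C^*$-cover of $\mathcal{S}_{C_5}$ and, being simple, it must coincide with $C^*_e(\mathcal{S}_{C_5})$ (this is also the identification of $C^*$-envelopes of connected graph operator systems used in Section~$6$, cf.\ \cite{OP}); being finite dimensional, $\M_5$ is nuclear, and $\mathcal{S}_{C_5}$, as an operator subsystem of the nuclear algebra $\M_5$, is exact by \Cref{stability}$(ii)$. (Given the emphasis of the paper, one could equally use the group operator system $\mathcal{S}(\mathfrak{u})$ attached to a suitable minimal generating set of a finite group: by \Cref{gpenvelope} its $C^*$-envelope is the nuclear algebra $C^*(G)$, and it is again exact.)

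The substance of the argument --- and the step I expect to be the main obstacle --- is the \emph{negative} part: checking that $\mathcal{S}_{C_5}$ has neither the DCEP nor the OSLLP; granted this, the reductions of the first paragraph close the proof. For the DCEP one invokes that $\mathcal{S}_{C_5}$ is a standard example of an exact operator system which is \emph{not} $C^*$-nuclear --- this lies in the same circle of ideas as the study of graph operator systems in \cite{KPTT1, kavnuc} and of their $(\min,\max)$-nuclearity in Section~$6$ --- and, since $\mathcal{S}_{C_5}$ is exact, failure of $C^*$-nuclearity is exactly failure of the DCEP; hence $\mathcal{S}_{C_5}$ is neither $(\mathrm{el},\mathrm{c})$- nor $(\min,\mathrm{c})$-nuclear. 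For the OSLLP one argues separately: passing to the dual operator system and using the duality from \cite{kavnuc} relating $(\min,\mathrm{er})$-nuclearity of an operator system to an expectation-type property of its dual, together with the self-complementarity of $C_5$ (so that $\mathcal{S}_{C_5}^{d}$ is, up to rescaling the Archimedean unit, again $\mathcal{S}_{C_5}$), reduces the OSLLP of $\mathcal{S}_{C_5}$ to the WEP of $\mathcal{S}_{C_5}$; but $\mathcal{S}_{C_5}$ is exact and not $(\min,\max)$-nuclear, hence has no WEP, and therefore no OSLLP. Consequently $\mathcal{S}_{C_5}$ has a nuclear $C^*$-envelope and yet is neither $(\min,\mathrm{c})$- nor $(\min,\mathrm{er})$- nor $(\mathrm{el},\mathrm{c})$-nuclear, which is what was to be shown.
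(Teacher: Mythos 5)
Your global strategy is the same as the paper's -- exhibit one operator system with nuclear $C^*$-envelope which fails all three nuclearities -- and your reductions are fine: $(\min,\mathrm{c})$-nuclearity is exactness plus DCEP, $(\mathrm{el},\mathrm{c})$-nuclearity is the DCEP, $(\min,\mathrm{er})$-nuclearity is the OSLLP, and exactness is the one property that does descend from the envelope. The identification $C^*_e(\mS_{C_5})=M_5$ (via \cite{OP} or simplicity of $M_5$) and the exactness of $\mS_{C_5}$ are also correct. The proof breaks precisely at the two negative claims, which you yourself flag as the main obstacle and then assert rather than prove. The statement that $\mS_{C_5}$ is a ``standard example of an exact operator system which is not $C^*$-nuclear'' is not available: the only known positive result is that chordal graphs give $C^*$-nuclear graph operator systems (\cite[Proposition 6.11]{KPTT1}), and, as Section 6 of the paper notes explicitly, it is not known whether the converse holds. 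The $5$-cycle is exactly the first undecided case, so whether $\mS_{C_5}$ has the DCEP is open, and with it both the $(\min,\mathrm{c})$ and $(\mathrm{el},\mathrm{c})$ parts of your argument.

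The OSLLP part is in worse shape. The claimed complete order isomorphism $\mS_{C_5}^d\cong\mS_{C_5}$ coming from self-complementarity of $C_5$ is not a known fact (the dual of a graph operator system is naturally an operator system \emph{quotient} of $M_n\cong M_n^d$; matching dimensions is not an identification), and the duality from \cite{kavnuc} that is actually available -- and is what the paper uses -- says that a finite dimensional operator system is $(\min,\mathrm{er})$-nuclear if and only if its \emph{dual} is exact. If your self-duality were true, this duality would give that $\mS_{C_5}$ \emph{is} $(\min,\mathrm{er})$-nuclear, since $\mS_{C_5}$ is exact: your reduction would prove the opposite of what you need. There is no known route from ``exact and not $(\min,\max)$-nuclear'' to failure of the OSLLP. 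The paper avoids all of this by taking the witness $\mS_2^d$, which comes equipped with the required negative facts: $\mS_2^d$ embeds completely order isomorphically into $M_4$, so by \Cref{nuclear-C-cover} its $C^*$-envelope is nuclear and by \Cref{stability} it is exact, i.e.\ $(\min,\mathrm{el})$-nuclear; on the other hand, since $\mS_2$ is not exact, Kavruk's duality (\cite[Corollary 10.14]{kavnuc}) shows $\mS_2^d$ is not $(\min,\mathrm{er})$-nuclear, hence not $(\min,\mathrm{c})$-nuclear because $\mathrm{er}\leq\mathrm{c}$, and then not $(\mathrm{el},\mathrm{c})$-nuclear either. To salvage your write-up you would either have to supply a proof that $\mS_{C_5}$ fails the DCEP and the OSLLP (neither is in the literature the paper relies on), or switch to a witness, such as $\mS_2^d$, for which the failures are theorems.
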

\begin{proof}
For the operator system $\mS_2$ associated to the free group with two
generators, there exists a complete order embedding of its dual
$\mS_2^d$ into $M_4$ (\cite[Theorem 4.4]{opsysqt}, \cite[Theorem
  10.11]{kavnuc}). And, since $\mS_2$ is not exact, by
\cite[Corollary $10.14$]{kavnuc}, $\mS_2^d$ is not $(\min,
\mathrm{er})$-nuclear.  Hence, it fails to be $(\min,
\mathrm{c})$-nuclear as $\mathrm{er} \leq \mathrm{c}$. In
particular, as exactness passes to operator subsystems
(\Cref{stability}), $\mS_2^d \subseteq M_4$ is exact and hence
$(\min,\mathrm{el})$-nuclear \cite[$\S 5$]{KPTT2}. Therefore, it is
not $(\mathrm{el}, \mathrm{c})$-nuclear as well. 

By \Cref{nuclear-C-cover}, $C^*_e(\mS_2^d)$ is nuclear. Thus, neither of
$(\min, \mathrm{c})$-nuclearity, $(\min, \mathrm{er})$-nuclearity or
$(\mathrm{el}, \mathrm{c})$-nuclearity passes to an operator system
from its $C^*$-envelope.
\end{proof} 

However, by the very way $\ot_{\mathrm{ess}}$ is defined, we have the following:

\begin{proposition}\label{mainresult} 
An operator system is $(\min, \mathrm{ess})$-nuclear if its
$C^*$-envelope is nuclear. Moreover, a unital $C^*$-algebra is $(\min,
\mathrm{ess})$-nuclear as an operator system if and only if it is
nuclear as a $C^*$-algebra.
\end{proposition}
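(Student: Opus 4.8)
The plan is to prove the two assertions of \Cref{mainresult} in turn, leaning entirely on the defining embedding $\mS \ot_{\mathrm{ess}} \mT \subseteq C^*_e(\mS) \ot_{C^*\text{-}\max} C^*_e(\mT)$ together with the functoriality of $\ot_{\min}$ and the nuclearity hypothesis. For the first statement, let $\mS$ be an operator system with $C^*_e(\mS)$ nuclear, and let $\mT$ be an arbitrary operator system. We always have $\mS \ot_{\min} \mT \subseteq \mS \ot_{\mathrm{ess}} \mT$ since $\min \leq \mathrm{ess}$ (this sits below $\mathrm{c}$ in the lattice $\min \leq \mathrm{e} \leq \mathrm{el},\mathrm{er} \leq \mathrm{c} \leq \max$, and $\mathrm{ess} \leq \mathrm{c}$ was recorded from \cite{disgrp}), so the content is the reverse inclusion, i.e.\ that the identity map $\mS \ot_{\mathrm{ess}} \mT \ra \mS \ot_{\min} \mT$ is a complete order isomorphism. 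I would embed both sides into $C^*$-tensor products of the $C^*$-envelopes: by definition $\mS \ot_{\mathrm{ess}} \mT \subseteq C^*_e(\mS) \ot_{C^*\text{-}\max} C^*_e(\mT)$ as a complete order embedding, while by injectivity of $\ot_{\min}$ and the fact that $\ot_{\min}$ on operator systems restricts to $\ot_{C^*\text{-}\min}$ on $C^*$-algebras, $\mS \ot_{\min} \mT \subseteq C^*_e(\mS) \ot_{C^*\text{-}\min} C^*_e(\mT)$ is a complete order embedding. Since $C^*_e(\mS)$ is nuclear, $C^*_e(\mS) \ot_{C^*\text{-}\max} C^*_e(\mT) = C^*_e(\mS) \ot_{C^*\text{-}\min} C^*_e(\mT)$ via the identity map. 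Chasing the two embeddings through this identification shows that the matrix cones of $\mS \ot_{\mathrm{ess}} \mT$ and $\mS \ot_{\min} \mT$ coincide inside the common ambient $C^*$-algebra, hence the identity map between them is a complete order isomorphism; as $\mT$ was arbitrary, $\mS$ is $(\min,\mathrm{ess})$-nuclear.

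For the second statement, let $A$ be a unital $C^*$-algebra. If $A$ is nuclear as a $C^*$-algebra, then by \Cref{rigidity}$(iii)$ (or directly, since the injective envelope of a $C^*$-algebra carries a canonical $C^*$-structure as noted in the proof of \Cref{ess-max}) we have $C^*_e(A) = A$, which is nuclear, so the first part of the proposition applies and $A$ is $(\min,\mathrm{ess})$-nuclear as an operator system. Conversely, suppose $A$ is $(\min,\mathrm{ess})$-nuclear as an operator system. Using $C^*_e(A) = A$ again together with \Cref{ess-max}, for any unital $C^*$-algebra $B$ we compute
\[
A \ot_{\min} B \;=\; A \ot_{\mathrm{ess}} B \;=\; C^*_e(A) \ot_{\max} C^*_e(B) \;\supseteq\; A \ot_{\max} B,
\]
where the middle equality is the defining embedding for $\ot_{\mathrm{ess}}$ which, with $C^*_e(A)=A$ and $C^*_e(B)=B$, is just $A \ot_{\mathrm{ess}} B = A \ot_{\max} B$ by \Cref{ess-max}. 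Hence $A \ot_{\min} B = A \ot_{\max} B$ for every unital $C^*$-algebra $B$; restricting to the $C^*$-tensor products (which are recovered as the $C^*$-algebras generated by the operator system tensor products inside their $C^*$-envelopes), this gives $A \ot_{C^*\text{-}\min} B = A \ot_{C^*\text{-}\max} B$ for all $B$, i.e.\ $A$ is nuclear as a $C^*$-algebra. (One could also invoke \Cref{non-nuclear} here: if $A$ were non-nuclear there would be a $B$ obstructing the $C^*$-level identification, contradicting what we just derived.)

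The only subtlety I anticipate is the bookkeeping that matches operator-system tensor products with $C^*$-tensor products: one must be slightly careful that the complete order embedding $\mS \ot_{\min} \mT \hookrightarrow C^*_e(\mS) \ot_{C^*\text{-}\min} C^*_e(\mT)$ really is the restriction of the minimal $C^*$-tensor product (so that nuclearity of $C^*_e(\mS)$ can be used) and that the two inclusions of $\mS \ot \mT$—one into the $\min$ side, one into the $\max$ side of the envelopes—are genuinely compatible with the identity map coming from nuclearity. This is routine given the functoriality and injectivity properties of $\ot_{\min}$ recorded in \cite{KPTT1,KPTT2}, but it is where the argument has to be written carefully rather than waved through. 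No new machinery is needed beyond the definition of $\ot_{\mathrm{ess}}$, \Cref{ess-max}, and the standard fact that a $C^*$-algebra is nuclear iff its min and max tensor products with every $C^*$-algebra agree.
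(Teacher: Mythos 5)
Your argument is correct and takes essentially the same route as the paper: the first statement is proved exactly as there, by embedding $\mS \ot_{\min} \mT$ and $\mS \ot_{\mathrm{ess}} \mT$ into the minimal and maximal tensor products of the $C^*$-envelopes and using that nuclearity of $C^*_e(\mS)$ makes these coincide. For the converse of the second statement the paper, after the same appeal to \Cref{ess-max}, finishes by citing Kavruk's result that $A \ot_{\min} B = A \ot_{\max} B$ for all unital $C^*$-algebras $B$ yields $(\min,\mathrm{c})$-nuclearity and hence $C^*$-nuclearity, whereas you pass to the $C^*$-completions directly; the bookkeeping point you flag (that equality of the operator system structures really forces equality of the $C^*$-norms) is genuine but is settled either by your parenthetical appeal to \Cref{non-nuclear} (whose proof produces a $B$ with $A \ot_{\min} B \neq A \ot_{\max} B$ as operator systems, contradicting what you derived) or by the paper's citations, so both versions are sound.
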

\begin{proof}
Let $\mS$ be an operator system with nuclear $C^*$-envelope. By
injectivity of $\ot_{\min}$ we have $ \mS \ot_{\min} \mT \subseteq
C^*_e(\mS) \ot_{\min} C^*_e(\mT)$ and, by the definition of
$\ot_{\mathrm{ess}}$, $\mS \ot_{\mathrm{ess}} \mT \subseteq C^*_e(\mS)
\ot_{\max} C^*_e(\mT)$ for any operator system $\mT$. By  \cite[Corollary
  $6.8$]{KPTT1} a nuclear $C^*$-algebra is $(\min,\max)$-nuclear as an operator system ,
$C^*_e(\mS) \ot_{\min} C^*_e(\mT) = C^*_e(\mS) \ot_{\max} C^*_e(\mT)$
and, hence, $\mS$ is $(\min, \mathrm{ess})$ nuclear.

 Conversely, if $A$ is a unital
$C^*$-algebra which is $(\min, \mathrm{ess})$-nuclear then, by
\Cref{ess-max}, $A \ot_{\min} B =  A \ot_{\mathrm{ess}} B = A \ot_{\max} B $ for every
unital $C^*$-algebra $B$. Therefore,  by
\cite[Proposition $4.11$]{kavnuc}, $A$ is $(\min, \mathrm{c})$-nuclear
as an operator system and hence, by \cite[Theorem $6.7$ and Corollary
  $6.8$]{KPTT1}, $A$ is nuclear as a $C^*$-algebra.
 \end{proof}

We do not know whether the $C^*$-envelope of a $(\min,
\mathrm{ess})$-nuclear operator system is nuclear in general or
not. However, the situation is better in the enough unitaries case:

\begin{theorem}\label{mainresult2}
Let $\mS \subset A$ contain enough unitaries of the unital
$C^*$-algebra $A$. Then $\mS$ is $(\min, \mathrm{ess})$-nuclear if and
only if $A$ is a nuclear $C^*$-algebra.
\end{theorem}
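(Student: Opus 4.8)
The plan is to prove the two implications separately; the reverse one is immediate, while the forward one is where the real work lies. First, if $A$ is nuclear then, by \Cref{unitary-envelope}, $C^*_e(\mS) = A$ up to a $*$-isomorphism fixing $\mS$, so $C^*_e(\mS)$ is nuclear and $\mS$ is $(\min,\mathrm{ess})$-nuclear by \Cref{mainresult}. For the converse I would assume $\mS$ is $(\min,\mathrm{ess})$-nuclear and verify directly that $A \ot_{C^*\text{-}{\min}} B = A \ot_{C^*\text{-}{\max}} B$ for every unital $C^*$-algebra $B$, which is precisely the nuclearity of $A$ (equivalently, one invokes the contrapositive of \Cref{non-nuclear}).

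So fix a unital $C^*$-algebra $B$ and view it as an operator system, noting $C^*_e(B)=B$ as recorded in \Cref{ess-max}. By injectivity of $\ot_{\min}$, the operator system $\mS \ot_{\min} B$ is the operator subsystem $\mS \ot B$ of $A \ot_{C^*\text{-}{\min}} B$, and by the definition of $\ot_{\mathrm{ess}}$ the operator system $\mS \ot_{\mathrm{ess}} B$ is the operator subsystem $\mS \ot B$ of $C^*_e(\mS)\ot_{\max} C^*_e(B) = A \ot_{C^*\text{-}{\max}} B$. The crucial point is that in each case $\mS \ot B$ contains enough unitaries of the ambient $C^*$-algebra: for a unitary $u$ in $\mS$ the element $u\ot 1$ lies in $\mS \ot B$ and is unitary, for a unitary $v$ in $B$ the element $1\ot v$ lies in $\mS \ot B$ and is unitary, and since the unitaries of $\mS$ generate $A$ these two families together generate the whole tensor-product $C^*$-algebra. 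Hence \Cref{unitary-envelope} yields, up to $*$-isomorphisms fixing $\mS \ot B$, that $C^*_e(\mS\ot_{\min} B)=A\ot_{C^*\text{-}{\min}} B$ and $C^*_e(\mS\ot_{\mathrm{ess}} B)=A\ot_{C^*\text{-}{\max}} B$ (the latter being also the Corollary following \Cref{unitary-envelope}, with $B$ in the second slot).

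The hypothesis that $\mS$ is $(\min,\mathrm{ess})$-nuclear means exactly that the identity on $\mS \ot B$ is a unital complete order isomorphism of $\mS \ot_{\min} B$ onto $\mS \ot_{\mathrm{ess}} B$; by the universal property of the $C^*$-envelope it extends to a $*$-isomorphism $\rho : A\ot_{C^*\text{-}{\min}} B \to A\ot_{C^*\text{-}{\max}} B$ fixing $\mS \ot B$, in particular fixing all the unitaries $u \ot 1$ and $1 \ot v$. Composing $\rho$ with the canonical surjection $q : A\ot_{C^*\text{-}{\max}} B \to A\ot_{C^*\text{-}{\min}} B$, which is the identity on the algebraic tensor product $A \ot B$, one obtains a $*$-homomorphism $q\circ\rho$ of $A\ot_{C^*\text{-}{\min}} B$ that is the identity on the $*$-subalgebra generated by those unitaries; as that subalgebra is dense (the unitaries of $\mS$ generate a dense $*$-subalgebra of $A$ and $B$ is spanned by its unitaries), continuity forces $q\circ\rho=\mathrm{id}$, so $q$ is a $*$-isomorphism and hence isometric. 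Since $q$ is the identity on $A \ot B$, the minimal and maximal $C^*$-norms agree there, i.e.\ $A\ot_{C^*\text{-}{\min}} B = A\ot_{C^*\text{-}{\max}} B$; as $B$ was arbitrary, $A$ is nuclear.

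I expect the last step to be the only delicate point: one must pass from a $*$-isomorphism of the two $C^*$-envelopes that a priori only fixes $\mS \ot B$ to the equality of the two $C^*$-tensor norms on all of $A \ot B$. The care is needed because the $*$-algebra generated by the unitaries of $\mS$ is merely dense in $A$ rather than equal to it, and because $\rho$ is norm non-increasing only from $\min$ to $\max$; this is exactly what forces the detour through the canonical quotient $q$ together with the density argument, instead of a naive ``$\rho=\mathrm{id}$ on $A\ot B$''. Everything else is the bookkeeping of identifying the relevant $C^*$-envelopes via \Cref{unitary-envelope}.
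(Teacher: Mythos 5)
Your proof is correct, and its skeleton is the same as the paper's: both directions hinge on \Cref{unitary-envelope} — the easy direction via \Cref{mainresult}, and the converse via the observation that $\mS\ot_{\min}B$ and $\mS\ot_{\mathrm{ess}}B$ contain enough unitaries of $A\ot_{C^*\text{-}\min}B$ and $A\ot_{C^*\text{-}\max}B$ respectively, so that these two $C^*$-algebras are the $C^*$-envelopes. Where you diverge is the finish. The paper argues by contrapositive and leans on \Cref{non-nuclear}: if $A$ is non-nuclear, some $B$ admits no $*$-isomorphism of $A\ot_{C^*\text{-}\min}B$ onto $A\ot_{C^*\text{-}\max}B$ fixing $A\ot B$, contradicting the envelope identification if $\mS$ were $(\min,\mathrm{ess})$-nuclear. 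You instead work with an arbitrary $B$ and upgrade the envelope isomorphism $\rho$ (which a priori only fixes $\mS\ot B$) to equality of the min and max norms on all of $A\ot B$, by composing with the canonical quotient $q$ and using density of the $*$-algebra generated by $u\ot 1$, $1\ot v$ together with the cross-norm estimate; this bypasses \Cref{non-nuclear} entirely and makes explicit a point the paper leaves implicit (envelope uniqueness fixes only $\mS\ot B$, whereas its \Cref{non-nuclear} is stated for isomorphisms fixing $A\ot B$, so some such density step is needed there too). Two cosmetic remarks: the universal property of the $C^*$-envelope literally produces a surjective $*$-homomorphism, and one should say a word (as your own computation $q\circ\rho=\mathrm{id}$ in fact does) to get bijectivity; and your worry that ``$\rho$ is norm non-increasing only from min to max'' is moot, since any injective $*$-homomorphism between $C^*$-algebras is automatically isometric — so the shorter route, showing $\rho$ fixes $A\ot B$ by the same density argument and then using isometry, would also close the proof. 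Net effect: your version is a bit longer but self-contained, trading the paper's auxiliary \Cref{non-nuclear} for a direct norm-equality argument.
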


\begin{proof}
If $A$ is nuclear, then, it follows from \Cref{unitary-envelope} and
\Cref{mainresult} that $\mS$ is $(\min, \text{ess})$-nuclear.
Conversely, suppose $A$ is not nuclear. Then, by \Cref{non-nuclear},
there exists a unital $C^*$-algebra $B$ such that the identity map on
$A \ot B$ does not extend to a $\ast$-isomorphism between $A
\otimes_{C^*\text{-}\min} B$ and $A \otimes_{C^*\text{-}\max} B$. Note
that, by injectivity of $\ot_{\min}$, $\mS \otimes_{\min} B $ has
enough unitaries in $A \otimes_{C^*\text{-}\min} B$ and by definition
of $\ot_{\mathrm{ess}}$, so does $\mS \otimes_{\mathrm{ess}} B $ in $A
\otimes_{C^*\text{-}\max} B$.  Thus, by \Cref{unitary-envelope} again,
$A \otimes_{C*\text{-}\min} B$ is the $C^*$-envelope of $\mS
\otimes_{\min} B $ and likewise $A \otimes_{C^*\text{-}\max} B$ is
that of $\mS \otimes_{\mathrm{ess}} B$. In particular, this implies
that $\mS \otimes_{\min} A \neq \mS \otimes_{\mathrm{ess}} A$ and
hence $\mS$ is not $(\min, \text{ess})$-nuclear.
\end{proof}

Recall from \cite[$\S
2$]{disgrp} that $\mathrm{ess} \leq \mathrm{c}$. However, as an
  application of our main results \Cref{mainresult} and
  \Cref{mainresult2} the next proposition shows that $\mathrm{ess}$
  does not compare that well with er and el.

\begin{proposition}\label{comparison}
$\mathrm{er} \nleq \mathrm{ess}$, $\mathrm{ess} \nleq \mathrm{er}$ and
  $ \mathrm{ess} \nleq \mathrm{el}$.
\end{proposition}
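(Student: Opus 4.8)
The plan is to exploit the two main results just proved, \Cref{mainresult} and \Cref{mainresult2}, together with the known characterizations of $(\min,\mathrm{el})$- and $(\min,\mathrm{er})$-nuclearity (exactness, and its dual). The strategy in each case is the same: produce an operator system $\mS$ whose $C^*$-envelope is nuclear (so that $\mS$ is $(\min,\mathrm{ess})$-nuclear) but which fails one of the enveloping nuclearity conditions, or conversely a $\mS$ that is exact (hence $(\min,\mathrm{el})$-nuclear) but whose $C^*$-envelope is non-nuclear, so $\mS$ need not be $(\min,\mathrm{ess})$-nuclear. Concretely, for the statements $\mathrm{er}\nleq\mathrm{ess}$ and $\mathrm{ess}\nleq\mathrm{el}$ I would reuse the example $\mS_2^d\subseteq M_4$ from the proof of \Cref{notnuclear1}: it is exact, hence $(\min,\mathrm{el})$-nuclear, but not $(\min,\mathrm{er})$-nuclear and not $(\mathrm{el},\mathrm{c})$-nuclear, while its $C^*$-envelope is nuclear by \Cref{nuclear-C-cover}, so by \Cref{mainresult} it is $(\min,\mathrm{ess})$-nuclear. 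If $\mathrm{er}\leq\mathrm{ess}$ held, then $(\min,\mathrm{ess})$-nuclearity would force $(\min,\mathrm{er})$-nuclearity (since $\min\leq\mathrm{er}\leq\mathrm{ess}$ would give the splitting noted in Section 3), contradicting non-$(\min,\mathrm{er})$-nuclearity of $\mS_2^d$; hence $\mathrm{er}\nleq\mathrm{ess}$. Similarly, if $\mathrm{ess}\leq\mathrm{el}$ held, then $(\min,\mathrm{el})$-nuclearity (which $\mS_2^d$ has) would pass down to $(\min,\mathrm{ess})$-nuclearity — fine — but the reverse direction is what we need: $\mathrm{ess}\leq\mathrm{el}$ together with $\min\leq\mathrm{ess}\leq\mathrm{el}$ means an $(\mathrm{el},\mathrm{c})$-nuclear-type obstruction transfers; more cleanly, $\mathrm{ess}\leq\mathrm{el}$ would make every $(\min,\mathrm{ess})$-nuclear operator system $(\min,\mathrm{el})$-nuclear $=$ exact, and I would contradict this by taking the Kirchberg–Wassermann $(\min,\max)$-nuclear operator system $\mS$ whose $C^*$-envelope is non-exact (recalled at the start of Section 4): such $\mS$ is trivially $(\min,\mathrm{ess})$-nuclear (being $(\min,\max)$-nuclear and $\mathrm{ess}\leq\max$), yet if it were exact then \Cref{stability}(ii) plus the fact that $C^*_e(\mS)$ contains $\mS$ — no, that doesn't directly give non-exactness of the envelope from exactness of $\mS$. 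Let me instead argue $\mathrm{ess}\nleq\mathrm{el}$ directly via \Cref{mainresult2}: I want an operator system $\mS$ with enough unitaries of a non-nuclear $A$, so $\mS$ is not $(\min,\mathrm{ess})$-nuclear, but which \emph{is} $(\min,\mathrm{el})$-nuclear, i.e. exact; then $\mathrm{ess}\leq\mathrm{el}$ would force $(\min,\mathrm{el})$-nuclearity $\Rightarrow$ $(\min,\mathrm{ess})$-nuclearity, a contradiction. The operator system $\mS_r(\mF)$ for $\mF$ a generating set of $\F_2$ does exactly this: by the Corollary after \Cref{r-gpenvelope} it is exact and has no nuclear $C^*$-cover, and it contains enough unitaries of $C^*_r(\F_2)$ by \Cref{r-gpenvelope}(i) and \Cref{unitary-envelope}; since $C^*_r(\F_2)$ is non-nuclear, \Cref{mainresult2} gives that $\mS_r(\mF)$ is not $(\min,\mathrm{ess})$-nuclear, contradicting $\mathrm{ess}\leq\mathrm{el}$ combined with exactness. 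This handles $\mathrm{ess}\nleq\mathrm{el}$.

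For $\mathrm{ess}\nleq\mathrm{er}$, note $\mathrm{er}$ is the "opposite" of $\mathrm{el}$ under the flip (er on $\mS\ot\mT$ corresponds to el on $\mT\ot\mS$), and $\mathrm{ess}$ is symmetric, so I would symmetrize the previous argument: if $\mathrm{ess}\leq\mathrm{er}$ then, composing with the flip and symmetry of $\mathrm{ess}$, one also gets $\mathrm{ess}\leq\mathrm{el}$, reducing to the case already done. Alternatively, and perhaps more transparently, I would take $\mS=\mS_r(\mF)$ again with $\F_2$: it is exact $=(\min,\mathrm{el})$-nuclear, hence $\mS_r(\mF)^d$ (finite dimensional case, or pass to a finite-dimensional exact subsystem and dualize) is $(\min,\mathrm{er})$-nuclear by the duality \cite[Corollary $10.14$]{kavnuc}; but I need the reverse, so instead I keep $\mS_r(\mF)$ itself and observe that if $\mathrm{ess}\leq\mathrm{er}$ then being $(\min,\mathrm{er})$-nuclear would imply $(\min,\mathrm{ess})$-nuclear — and $\mS_r(\mF)$ need not be $(\min,\mathrm{er})$-nuclear, so this isn't quite a contradiction as stated. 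The cleanest route is therefore: pick the \emph{same} witness as in \Cref{notnuclear1}, namely $\mS_2^d\subseteq M_4$, which is exact hence $(\min,\mathrm{er})$-nuclear? No — \Cref{notnuclear1} says precisely $\mS_2^d$ is \emph{not} $(\min,\mathrm{er})$-nuclear while it \emph{is} $(\min,\mathrm{el})$-nuclear and $(\min,\mathrm{ess})$-nuclear. So with $\mS=\mS_2^d$: it is $(\min,\mathrm{ess})$-nuclear but not $(\min,\mathrm{er})$-nuclear; if $\mathrm{ess}\leq\mathrm{er}$, then $\min\leq\mathrm{ess}\leq\mathrm{er}$ together with $\mathrm{er}\leq\mathrm{c}$ and the splitting principle would give $(\min,\mathrm{er})$-nuclearity of $\mS_2^d$ from its $(\min,\mathrm{ess})$-nuclearity — wait, that direction needs $\mathrm{er}\leq\mathrm{ess}$, not $\mathrm{ess}\leq\mathrm{er}$. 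So $\mS_2^d$ settles $\mathrm{er}\nleq\mathrm{ess}$ (done above) but not $\mathrm{ess}\nleq\mathrm{er}$. For $\mathrm{ess}\nleq\mathrm{er}$ I therefore do want a witness that is $(\min,\mathrm{er})$-nuclear but not $(\min,\mathrm{ess})$-nuclear: take $\mS_r(\mF)^d$ for $\F_2$, which is $(\min,\mathrm{er})$-nuclear since $\mS_r(\mF)$ is finite-dimensional and exact (so $(\min,\mathrm{el})$-nuclear) and \cite[Corollary $10.14$]{kavnuc} transfers this to $(\min,\mathrm{er})$-nuclearity of the dual; and $C^*_e(\mS_r(\mF)^d)$ is nuclear (it sits in $M_k$), so $\mS_r(\mF)^d$ \emph{is} $(\min,\mathrm{ess})$-nuclear — which again is not a contradiction. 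This is getting delicate; the correct witness for $\mathrm{ess}\nleq\mathrm{er}$ is one where $C^*_e$ is non-nuclear yet the system is $(\min,\mathrm{er})$-nuclear, e.g. the dual of a finite-dimensional non-exact operator system made $(\min,\mathrm{er})$-nuclear — but $\mS_2^d$ has nuclear envelope, so I'd look at $\mS_2$ itself versus its dual, or invoke the Kirchberg–Wassermann example's dual. The main obstacle, as the above vacillation shows, is pinning down for each of the three inequalities the correct asymmetric pair of properties (one that $\mathrm{ess}$-nuclearity implies, one that it does not), and verifying the chosen operator system really has the claimed nuclear/non-nuclear envelope; I expect the $\mathrm{ess}\nleq\mathrm{er}$ case to be the fiddly one because of the flip-asymmetry between $\mathrm{er}$ and $\mathrm{el}$, and I would resolve it by appealing to the symmetry of $\mathrm{ess}$ to deduce $\mathrm{ess}\leq\mathrm{er}\Leftrightarrow\mathrm{ess}\leq\mathrm{el}$ and then quoting the $\mathrm{ess}\nleq\mathrm{el}$ case already established via \Cref{mainresult2} and $C^*_r(\F_2)$.

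In summary, the three assertions follow by combining: (a) $\mS_2^d\subseteq M_4$ is $(\min,\mathrm{ess})$-nuclear (by \Cref{mainresult}, its envelope being nuclear) but not $(\min,\mathrm{er})$-nuclear, giving $\mathrm{er}\nleq\mathrm{ess}$ since $\mathrm{er}\leq\mathrm{ess}$ plus the splitting principle would propagate $(\min,\mathrm{er})$-nuclearity down; (b) $\mS_r(\mF)$ for $\F_2$ contains enough unitaries of the non-nuclear $C^*_r(\F_2)$, so by \Cref{mainresult2} it is not $(\min,\mathrm{ess})$-nuclear, yet it is exact $=(\min,\mathrm{el})$-nuclear, so $\mathrm{ess}\leq\mathrm{el}$ is impossible; and (c) the symmetry of $\mathrm{ess}$ together with the flip-correspondence between $\mathrm{el}$ and $\mathrm{er}$ reduces $\mathrm{ess}\leq\mathrm{er}$ to $\mathrm{ess}\leq\mathrm{el}$, which (b) has just ruled out, so $\mathrm{ess}\nleq\mathrm{er}$ as well.
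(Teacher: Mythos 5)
Your final argument is correct, and for two of the three assertions it is exactly the paper's proof: $\mathrm{er}\nleq\mathrm{ess}$ via $\mS_2^d\subseteq M_4$ (which is $(\min,\mathrm{ess})$-nuclear by \Cref{mainresult} yet not $(\min,\mathrm{er})$-nuclear), and $\mathrm{ess}\nleq\mathrm{el}$ via $\mS_r(\mF)$ for the free group, which is exact, hence $(\min,\mathrm{el})$-nuclear, but contains enough unitaries of the non-nuclear $C^*_r(\mathbb{F}_2)$ and so is not $(\min,\mathrm{ess})$-nuclear by \Cref{mainresult2}. Where you diverge is the third assertion: the paper produces a direct witness, namely $\mS_2$ itself, which is $(\min,\mathrm{er})$-nuclear by \cite[Proposition 9.9]{KPTT2} but not $(\min,\mathrm{ess})$-nuclear by \Cref{mainresult2}, whereas you reduce $\mathrm{ess}\leq\mathrm{er}$ to $\mathrm{ess}\leq\mathrm{el}$ using the symmetry of ess (proved in Section 3) together with the flip correspondence between el and er, and then invoke the $\mathrm{ess}\nleq\mathrm{el}$ case. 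That reduction is sound: if the identity $\mT\ot_{\mathrm{er}}\mS\ra\mT\ot_{\mathrm{ess}}\mS$ is completely positive for all pairs, then composing the flip isomorphism $\mS\ot_{\mathrm{el}}\mT\ra\mT\ot_{\mathrm{er}}\mS$, that map, and the flip for ess gives complete positivity of $\mS\ot_{\mathrm{el}}\mT\ra\mS\ot_{\mathrm{ess}}\mT$. Your route spares the appeal to \cite[Proposition 9.9]{KPTT2}, but it leans on the el/er flip duality, which the paper never states; you should justify it explicitly (it follows from the definitions $\mS\ot_{\mathrm{el}}\mT\subseteq I(\mS)\ot_{\max}\mT$, $\mS\ot_{\mathrm{er}}\mT\subseteq\mS\ot_{\max}I(\mT)$ and the symmetry of max, as in \cite{KPTT1}). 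The paper's choice has the mild advantage of exhibiting a concrete operator system separating er from ess rather than a formal reduction. Also, the long visible vacillation (the Kirchberg--Wassermann detour, the aborted $\mS_r(\mF)^d$ attempt) should be excised; only the summary paragraph constitutes the proof.
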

\begin{proof} 
 
We saw in \Cref{notnuclear1} that there exists
a complete order embedding of  $\mS_2^d$ into $M_4$. So, by \Cref{mainresult}, $\mS_2^d $ is $(\min,
\mathrm{ess})$-nuclear.  In \Cref{notnuclear1}, we also saw that $\mS_2^d \subset M_4$ is not
$(\min,\mathrm{er})$-nuclear. This implies that  $\mathrm{er} \nleq
\mathrm{ess}$.

 Then, by \cite[Proposition $9.9$]{KPTT2}, $\mS_2$ is
$(\min, \mathrm{er})$-nuclear but, by \Cref{mainresult2}, it is not
$(\min, \mathrm{ess})$-nuclear and hence $\mathrm{ess} \nleq
\mathrm{er}$. Finally, if $\mF$ is a generating set of a free group
$\mathbb{F}$ with $2 \leq |\mF| \leq \infty$, then $C^*_r(\mathbb{F})$
being exact (\cite[Chapter 8]{pisier},\cite[Proposition 5.1.8]{brownozawa}), $\mS_r(\mF)$ is $(\min,
\mathrm{el})$-nuclear by \Cref{stability}; and on the other hand, by
\Cref{mainresult2} again, $\mS_r(\mF)$ is not $(\min,
\mathrm{ess})$-nuclear implying that $\mathrm{ess} \nleq \mathrm{el}$. \end{proof}

It is not clear whether $\mathrm{el} \leq \mathrm{ess} $ or
not. However, we will prove later (in \Cref{ess-to-el}) that $(\min,
\mathrm{ess})$-nuclearity implies $(\min,
\mathrm{el})$-nuclearity.

As an immediate consequence of \Cref{mainresult}, we observe that $(\min,
\mathrm{ess})$-nuclearity passes to an operator system from its
$C^*$-envelope. However, since a unital $C^*$-algebra is $(\min,
\mathrm{ess})$-nuclear if and only if it is nuclear, and as nuclearity
is not preserved by $C^*$-subalgebras (\cite[$\S 4$]{brownozawa}), we
observe that, in general:

\begin{remarks}  $(\min, \mathrm{ess})$-nuclearity does not
pass to operator subsystems.
\end{remarks}
      
\begin{proposition} 
 $(\mathrm{el}, \mathrm{c})$-nuclearity and $(\mathrm{el},
  \max)$-nuclearity do not pass to operator subsystems.
  \end{proposition}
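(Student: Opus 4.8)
The plan is to imitate the argument of the preceding remark. By the characterizations recalled in Section~3 (\cite[\S6]{KPTT2}, \cite[\S4]{han} and \cite[Theorem~7.3]{KPTT2}), an operator system is $(\mathrm{el},\max)$-nuclear precisely when it has the WEP and $(\mathrm{el},\mathrm{c})$-nuclear precisely when it has the DCEP, and the WEP is the classical example of a property that is \emph{not} inherited by $C^*$-subalgebras. So it suffices to exhibit one operator system with the WEP -- which automatically has the DCEP as well, since $\mathrm{el}\le\mathrm{c}\le\max$ squeezes $(\mathrm{el},\max)$-nuclearity down to $(\mathrm{el},\mathrm{c})$-nuclearity -- that contains an operator subsystem having neither property.

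Concretely, I would take $\mathcal{T}=C^*_r(\F_2)$, the reduced $C^*$-algebra of the free group on two generators, represented faithfully on a Hilbert space $\mH$, so that $\mathcal{T}\subset\B$ is an operator subsystem. The ambient operator system $\B$ is injective, hence has the WEP and therefore also the DCEP, so it is both $(\mathrm{el},\mathrm{c})$- and $(\mathrm{el},\max)$-nuclear. On the other hand $\mathcal{T}$ is exact (\cite[Chapter~8]{pisier}, \cite[Proposition~5.1.8]{brownozawa}) but not nuclear, since $\F_2$ is not amenable (\cite{lan}); as a $C^*$-algebra is nuclear exactly when it is both exact and has the WEP (\cite{brownozawa}), $\mathcal{T}$ fails the WEP, i.e.\ it is not $(\mathrm{el},\max)$-nuclear. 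Finally, because $\ot_{\mathrm{c}}$ coincides with $\ot_{\max}$ whenever one tensor factor is a $C^*$-algebra (\cite[Theorem~6.7]{KPTT1}), the notions $(\mathrm{el},\mathrm{c})$- and $(\mathrm{el},\max)$-nuclearity agree on $C^*$-algebras, so $\mathcal{T}$ is not $(\mathrm{el},\mathrm{c})$-nuclear either. Hence neither property passes from $\B$ to its operator subsystem $\mathcal{T}$.

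There is no genuinely hard step here; the only points to handle with care are the two reductions to WEP/DCEP and the observation that the two nuclearities collapse for a $C^*$-algebra, which is exactly why a single exact non-WEP $C^*$-algebra defeats both at once. If one prefers to stay inside the group-operator-system framework, an alternative is to take $\mathcal{S}=\mathcal{S}_r(\mF)$ for a generating set $\mF$ of $\F_2$, viewed inside $\B$: by \Cref{stability} it is exact, hence $(\min,\mathrm{el})$-nuclear; were it in addition $(\mathrm{el},\max)$-nuclear (resp.\ $(\mathrm{el},\mathrm{c})$-nuclear), it would be $(\min,\max)$-nuclear (resp.\ $(\min,\mathrm{c})$-nuclear), hence $(\min,\mathrm{ess})$-nuclear since $\min\le\mathrm{ess}\le\mathrm{c}\le\max$, which by \Cref{mainresult2} -- noting that $\mathcal{S}_r(\mF)$ contains enough unitaries of $C^*_r(\F_2)=C^*_e(\mathcal{S}_r(\mF))$ by \Cref{r-gpenvelope} -- would force $C^*_r(\F_2)$ to be nuclear, a contradiction. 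Either route closes the argument.
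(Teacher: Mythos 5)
Your argument is correct, and its logical skeleton is the one the paper uses: translate $(\mathrm{el},\max)$- and $(\mathrm{el},\mathrm{c})$-nuclearity into the WEP and the DCEP, note that the two collapse for $C^*$-algebras (you do this via $\ot_{\mathrm{c}}=\ot_{\max}$ when one factor is a $C^*$-algebra, the paper via ``WEP $=$ DCEP for $C^*$-algebras''---these are the same observation), and then exhibit the failure at the level of $C^*$-algebras. Where you differ is in how the counterexample is produced. The paper argues abstractly: since nuclear $=$ exact $+$ WEP and exactness is hereditary, the known failure of nuclearity for $C^*$-subalgebras forces the WEP to fail for $C^*$-subalgebras; this implicitly relies on the nontrivial existence of a nuclear $C^*$-algebra containing a non-nuclear $C^*$-subalgebra. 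You instead give an explicit pair, $C^*_r(\F_2)\subset B(\mH)$: the ambient has the WEP for the cheap reason of injectivity (no nuclear ambient is needed), while the subalgebra is exact and non-nuclear, hence fails the WEP by the same Kirchberg-type equivalence nuclear $=$ exact $+$ WEP. This makes the proof more self-contained, at the mild cost that your ambient witness is not itself nuclear (irrelevant here, since only its WEP/DCEP matter). Your second, paper-internal route---taking $\mS_r(\mF)\subset B(\mH)$, using exactness plus the sandwich $\min\le\mathrm{ess}\le\mathrm{c}\le\max$ to upgrade either hypothesis to $(\min,\mathrm{ess})$-nuclearity, and then contradicting \Cref{mainresult2} via enough unitaries---is also valid and has the virtue of staying within the machinery developed in Sections 2--4, though it is heavier than necessary for this statement.
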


   \begin{proof} Since a $C^*$-algebra is nuclear if and only if it is exact
  and has the $\mathrm{WEP}$ (\cite[$\S 17$]{pisier}), and as
  exactness passes to $C^*$-subalgebras, the failure of passage of
  nuclearity to $C^*$-subalgebras \cite{brownozawa} also shows that
  the $\mathrm{WEP}$ does not pass to $C^*$-subalgebras. And as the
  $\mathrm{WEP}$ and the $\mathrm{DCEP}$ are the same for a
  $C^*$-algebra (\cite{KPTT2, kavnuc}), this also illustrates that the
  DCEP (equivalently, $(\mathrm{el}, \mathrm{c})$-nuclearity) and the
  WEP (equivalently, $(\mathrm{el}, \max)$-nuclearity) do not pass to
  operator subsystems in general.
\end{proof}

On the other hand, following is an
immediate consequence of \Cref{nuclear-C-cover} and \Cref{mainresult}:

\begin{corollary}
 An operator system  $\mS$ is $\mathrm{(min, ess)}$-nuclear if it admits a nuclear 
$C^*$-cover. \label{nuclear-cover} 

\end{corollary}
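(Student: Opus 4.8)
The plan is simply to compose the two principal results of this section. Suppose $\mS$ admits a nuclear $C^*$-cover $(A,i)$ in the sense of Hamana recalled in Section~\ref{s:2}, i.e.\ $A$ is a nuclear unital $C^*$-algebra and $i:\mS\ra A$ is a complete order embedding with $i(\mS)$ generating $A$. First I would apply \Cref{nuclear-C-cover}: by the universal ``minimality'' property of the $C^*$-envelope there is a surjective unital $*$-homomorphism $A\ra C^*_e(\mS)$, and since a quotient of a nuclear $C^*$-algebra is nuclear, $C^*_e(\mS)$ is nuclear. Nothing new is needed at this step; it is exactly the content of \Cref{nuclear-C-cover}.

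Having upgraded the hypothesis ``$\mS$ has \emph{a} nuclear $C^*$-cover'' to ``$C^*_e(\mS)$ is nuclear'', I would then quote \Cref{mainresult} verbatim: an operator system whose $C^*$-envelope is nuclear is $(\min,\mathrm{ess})$-nuclear. For completeness, the mechanism there is that for any operator system $\mT$ one has $\mS\ot_{\min}\mT\subseteq C^*_e(\mS)\ot_{\min}C^*_e(\mT)$ by injectivity of $\ot_{\min}$ and $\mS\ot_{\mathrm{ess}}\mT\subseteq C^*_e(\mS)\ot_{\max}C^*_e(\mT)$ by the definition of $\ot_{\mathrm{ess}}$; nuclearity of $C^*_e(\mS)$ makes these two ambient $C^*$-tensor products coincide, forcing $\mS\ot_{\min}\mT=\mS\ot_{\mathrm{ess}}\mT$ for every $\mT$.

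I do not expect any genuine obstacle: the statement is a one-line corollary obtained by feeding \Cref{nuclear-C-cover} into \Cref{mainresult}. The only point deserving a moment's care is terminological, namely that ``$C^*$-cover'' is meant in Hamana's sense so that \Cref{nuclear-C-cover} applies; once that is granted the argument is immediate.
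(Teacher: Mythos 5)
Your proposal is correct and is exactly the paper's argument: the corollary is stated there as an immediate consequence of \Cref{nuclear-C-cover} (upgrading a nuclear $C^*$-cover to nuclearity of $C^*_e(\mS)$) followed by \Cref{mainresult}. Nothing further is needed.
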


In particular, we also see  that if $\mS \subset A$ is an operator
subsystem of a finite dimensional $C^*$-algebra $A$ then $\mS$ is
$\mathrm{(min, ess)}$-nuclear. We saw in \Cref{notnuclear1} that $\mS_2^d \subset M_4$ is not $(\min,
\mathrm{c})$-nuclear; so, it also serves as an example of a
finite dimensional (exact) operator system which is not
$(\mathrm{ess}, \mathrm{c})$-nuclear. We have thus observed that:

\begin{remarks}
An operator system need not be $(\mathrm{ess}, \mathrm{c})$-nuclear if
it possesses a nuclear $C^*$-envelope. In particular, $(\mathrm{ess},
\mathrm{c})$-nuclearity does not pass to operator subsystems.
\end{remarks}

By \Cref{mainresult}, every $C^*$-algebra which is $(\min,
\text{ess})$-nuclear is also nuclear and hence exact.  In fact, the
same is true for operator systems as well, which will follow from the
proposition given below, where the notations $\hat{\ot}$ and $\bar{\ot}$ have
similar meanings as in \Cref{exactness}:

\begin{proposition}\label{ess-el}
Let $\mS$ be an operator system and $\mathcal{I}$ be a closed ideal in a unital
$C^*$-algebra $A$. Then $\mS \bar{\ot} \mathcal{I}$ is a completely
biproximinal kernel in $\mS \hat{\ot}_{\mathrm{ess}} A$ and the induced
map $$\frac{\mS \hat{\ot}_{\mathrm{ess}} A}{\mS \bar{\ot} \mathcal{I}}
\ra \mS \hat{\ot}_{\mathrm{ess}} A / \mathcal{I}$$ is a unital
complete order isomorphism.
\end{proposition}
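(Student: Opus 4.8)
The plan is to transport the whole situation into the ambient $C^*$-algebra in which $\mS\hat{\ot}_{\mathrm{ess}}A$ is sitting and then to exploit that the maximal $C^*$-tensor product is exact, i.e.\ preserves short exact sequences. First I would fix notation. Since $A$ is a $C^*$-algebra, $C^*_e(A)=A$ and, as in the proof of \Cref{ess-max}, the operator system maximal tensor product of two $C^*$-algebras coincides with the $C^*$-algebraic one; hence by the very definition of $\ot_{\mathrm{ess}}$ the completion $\mathcal{R}:=\mS\hat{\ot}_{\mathrm{ess}}A$ is a closed operator subsystem of the unital $C^*$-algebra $D:=C^*_e(\mS)\ot_{C^*\text{-}\max}A$, with $\mS\ot A$ dense in it. Let $q:A\ra A/\mathcal{I}$ be the quotient $*$-homomorphism and let $J$ be the closed two-sided ideal of $D$ generated by $1\ot\mathcal{I}$; then $J=\overline{C^*_e(\mS)\ot\mathcal{I}}$, and since the maximal $C^*$-tensor product preserves short exact sequences (\cite{brownozawa}), $\mathrm{id}\ot q:D\ra C^*_e(\mS)\ot_{C^*\text{-}\max}(A/\mathcal{I})$ is a surjective $*$-homomorphism with kernel exactly $J$, so $D/J\cong C^*_e(\mS)\ot_{C^*\text{-}\max}(A/\mathcal{I})$. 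Restricting and corestricting $\mathrm{id}\ot q$ (the corestriction being legitimate because $\mS\ot A$ is mapped onto $\mS\ot(A/\mathcal{I})$, which is dense in $\mS\hat{\ot}_{\mathrm{ess}}(A/\mathcal{I})$) yields a unital completely positive map $\Phi:\mathcal{R}\ra\mS\hat{\ot}_{\mathrm{ess}}(A/\mathcal{I})$.

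Next I would identify $\mS\bar{\ot}\mathcal{I}$ with $\mathcal{R}\cap J$. One inclusion is clear; for the other, take an approximate unit $(e_\lambda)$ of $\mathcal{I}$ and note that $1\ot e_\lambda$ is a contractive approximate unit for $J$ while $(1\ot e_\lambda)(s\ot a)=s\ot e_\lambda a\in\mS\ot\mathcal{I}$ for every $s\ot a\in\mS\ot A$; a two-$\ep$ argument using the density of $\mS\ot A$ in $\mathcal{R}$ then forces every element of $\mathcal{R}\cap J$ into $\overline{\mS\ot\mathcal{I}}=\mS\bar{\ot}\mathcal{I}$. Consequently $\mS\bar{\ot}\mathcal{I}=\ker\Phi$ is a kernel in $\mathcal{R}$, and $\Phi$ descends to a unital completely positive, injective map $\bar{\Phi}:\mathcal{R}/(\mS\bar{\ot}\mathcal{I})\ra\mS\hat{\ot}_{\mathrm{ess}}(A/\mathcal{I})$.

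The heart of the proof is to show that $\bar{\Phi}$ is a complete order isomorphism and that $\mS\bar{\ot}\mathcal{I}$ is completely biproximinal; both reduce to producing honest positive (and norm) lifts across $\Phi$. Given $x\in M_n(\mathcal{R})$ with $q_D^{(n)}(x)\geq 0$ in the $C^*$-algebra $M_n(D/J)$, I would first choose a positive lift $\tilde z\in M_n(D)_+$ of $q_D^{(n)}(x)$, so that $x-\tilde z\in M_n(J)$, and then conjugate by the square roots $Q_\lambda:=1\ot(1-e_\lambda)^{1/2}$. The key elementary observations are that $Q_\lambda M_n(\mathcal{R})Q_\lambda\subseteq M_n(\mathcal{R})$ and $x-Q_\lambda x Q_\lambda\in M_n(\mS\bar{\ot}\mathcal{I})$ — both because $Q_\lambda$ and $1-Q_\lambda$ touch only the $A$-leg and push it into $\mathcal{I}$ — together with $Q_\lambda\tilde z Q_\lambda\geq 0$ and $\|Q_\lambda(x-\tilde z)Q_\lambda\|\ra 0$ (as $x-\tilde z\in M_n(J)$). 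Combining these produces, for every $\ep>0$, some $k\in M_n(\mS\bar{\ot}\mathcal{I})$ with $x+\ep 1_n+k\geq 0$, i.e.\ $\dot{x}\geq 0$ in $M_n(\mathcal{R}/(\mS\bar{\ot}\mathcal{I}))$. Already this shows that the natural map $\mathcal{R}/(\mS\bar{\ot}\mathcal{I})\ra D/J$ is a complete order embedding, hence completely isometric, with dense image $q_D(\mathcal{R})$ inside $\mS\hat{\ot}_{\mathrm{ess}}(A/\mathcal{I})$.

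I expect the genuine obstacle to be the passage from these $\ep$-approximate lifts to the exact lifts demanded by complete biproximinality — equivalently, the surjectivity of $\bar{\Phi}$, equivalently the closedness of $\mathcal{R}+J$ in $D$. To overcome it one has to use more than mere density: the point is that $J$ is precisely the ideal of $D$ generated by $\mS\bar{\ot}\mathcal{I}$ (since $(s_1\cdots s_m)\ot c=(s_1\ot 1)\cdots(s_m\ot 1)(1\ot c)$ with $s_i\ot 1\in\mathcal{R}$ and $1\ot c\in\mS\bar{\ot}\mathcal{I}$), and this should let one arrange a positive lift of $q_D^{(n)}(x)$ that already lies in $M_n(\mathcal{R})$, so that $k:=(\text{that lift})-x\in M_n(\mathcal{R})\cap M_n(J)=M_n(\mS\bar{\ot}\mathcal{I})$. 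A convenient packaging is to build a unital completely positive section of $\bar{\Phi}$ out of the algebraically split exact sequence $0\ra\mS\ot\mathcal{I}\ra\mS\ot A\ra\mS\ot(A/\mathcal{I})\ra 0$: its well-definedness is immediate from flatness of vector spaces, while its complete positivity and contractivity are exactly what the $Q_\lambda$-conjugation argument of the previous paragraph supplies; the section and $\bar{\Phi}$ are then mutually inverse, which yields simultaneously that $\bar{\Phi}$ is a unital complete order isomorphism and that $\mS\bar{\ot}\mathcal{I}$ is a completely biproximinal kernel in $\mS\hat{\ot}_{\mathrm{ess}}A$.
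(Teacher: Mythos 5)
Your first three paragraphs are sound and essentially reconstruct by hand what the paper simply cites: the identification $\ker\Phi=\mathcal{R}\cap J=\mS\bar{\ot}\mathcal{I}$ via an approximate unit, and the $Q_\lambda$-conjugation argument showing that the induced map $\mathcal{R}/(\mS\bar{\ot}\mathcal{I})\ra D/J\cong C^*_e(\mS)\ot_{C^*\text{-}\max}(A/\mathcal{I})$ is a unital complete order embedding (the paper obtains this, together with complete biproximinality, from \cite[Proposition $5.14$]{KPTT2}, and the quotient identification from exactness of the maximal tensor product). The gap is in your last paragraph, precisely at the point you flagged, and it is twofold. First, complete biproximinality demands \emph{exact} lifts (a genuinely positive representative $x+k$ with $k\in M_n(\mS\bar{\ot}\mathcal{I})$, plus the norm conditions of \cite[Definition $4.9$]{KPTT2}); your argument only yields $\ep$-approximate lifts, and the observation that $J$ is the ideal of $D$ generated by $\mS\bar{\ot}\mathcal{I}$ is not accompanied by any mechanism producing a positive lift already inside $M_n(\mathcal{R})$ --- that exact-lifting statement is the entire content of the proximinality claim and needs either an iteration argument or the citation the paper uses. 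Second, the ``ucp section'' packaging of surjectivity is circular: the section is defined only on the algebraic tensor product $\mS\ot(A/\mathcal{I})$; its complete positivity (hence complete contractivity) does follow from your third paragraph, but its continuous extension takes values in the \emph{completion} of $\mathcal{R}/(\mS\bar{\ot}\mathcal{I})$ in its operator-system norm, so concluding that it inverts $\bar\Phi$ on all of $\mS\hat{\ot}_{\mathrm{ess}}(A/\mathcal{I})$ presupposes that this quotient is already complete --- which is equivalent to the closedness of $\mathcal{R}+J$, i.e.\ exactly what was to be proved. Note also that biproximinality and surjectivity are two separate outstanding items, not equivalent reformulations of one another.

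The surjectivity half can in fact be closed with the tools you already set up: for $x\in\mathcal{R}$ you have $x-Q_\lambda xQ_\lambda\in M_n(\mS\bar{\ot}\mathcal{I})$ and $\lim_\lambda\|Q_\lambda xQ_\lambda\|=\|x+J\|$ (the lower bound because $q_D(Q_\lambda)=1$, the upper bound by the same estimate you used for elements of $J$), whence $\mathrm{dist}(x,\mS\bar{\ot}\mathcal{I})=\mathrm{dist}(x,J)$; thus your complete order embedding is isometric onto a complete, hence closed, subspace of $D/J$ containing $\mS\ot(A/\mathcal{I})$, which is surjectivity of $\bar\Phi$. The exact positive lifts required for complete biproximinality still need a separate argument; the paper sidesteps both issues in one stroke by invoking \cite[Proposition $5.14$]{KPTT2}, so as written your proposal does not yet prove the full statement.
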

\begin{proof}
Since ess is induced by a $C^*$-algebraic tensor product, by
\cite[Proposition $5.14$]{KPTT2}, $\mS \bar{\ot} \mathcal{I}$ is a
completely biproximinal kernel in
$\mS \hat{\ot}_{\mathrm{ess}} A$, i.e., $ \mcal{C}_n(\frac{\mS
  \hat{\ot}_{\mathrm{ess}} A}{\mS \bar{\ot} \mathcal{I}}) =
\mcal{D}_n(\frac{\mS \hat{\ot}_{\mathrm{ess}} A}{\mS \bar{\ot}
  \mathcal{I}})$ for all $n\geq1$  (\cite[Definition $4.9$]{KPTT2}), and the induced map $\varphi:
\frac{\mS \hat{\ot}_{\mathrm{ess}} A}{\mS \bar{\ot} \mathcal{I}} \ra
\frac{C^*_e(\mS) \hat{\ot}_{\max} A}{C^*_e(\mS) \hat{\ot}_{\max}
  \mathcal{I}}$ is a complete order isomorphic inclusion. On the other
hand, the canonical map $\theta: \frac{C^*_e(\mS) \hat{\ot}_{\max}
  A}{C^*_e(\mS) \hat{\ot}_{\max} \mathcal{I}} \ra C^*_e(\mS)
\hat{\ot}_{\max} A/ \mathcal{I}$ is a complete order isomorphism (by
\cite[Corollary $15.6$]{KPTT2}) and we have $ \mS
\hat{\ot}_{\mathrm{ess}} A / \mathcal{I} \subseteq C^*_e(\mS)
\hat{\ot}_{\max} A/ \mathcal{I}$. Clearly, $\theta \circ \varphi$ is a
surjection onto $\mS \hat{\ot}_{ess} A / \mathcal{I}$ and agrees with
the induced map $\frac{\mS \hat{\ot}_{\mathrm{ess}} A}{\mS \bar{\ot}
  \mathcal{I}} \ra \mS \hat{\ot}_{\mathrm{ess}} A / \mathcal{I}$ and
hence the assertion holds.

\end{proof}

\begin{corollary}\label{ess-to-el}
Every operator system which is $(\min, \mathrm{ess})$-nuclear is also
$(\min, \mathrm{el})$-nuclear.
\end{corollary}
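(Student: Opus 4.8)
The plan is to deduce this from the already-established \Cref{ess-el} together with the characterisation of exactness in \Cref{stability}$(i)$. Since an operator system is $(\min,\mathrm{el})$-nuclear precisely when it is exact, it suffices to prove that every $(\min,\mathrm{ess})$-nuclear operator system $\mS$ is exact in the sense of \Cref{exactness}: for every unital $C^*$-algebra $A$ and every closed ideal $\mathcal{I} \subseteq A$, the induced unital completely positive map
\[
q_{\min} \colon \frac{\mS \hat{\ot}_{\min} A}{\mS \bar{\ot} \mathcal{I}} \ra \mS \hat{\ot}_{\min}(A/\mathcal{I})
\]
is a complete order isomorphism.

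First I would use the hypothesis to pass from $\min$ to $\mathrm{ess}$. Since $\mS$ is $(\min,\mathrm{ess})$-nuclear, the identity maps $\mS \ot_{\min} A \ra \mS \ot_{\mathrm{ess}} A$ and $\mS \ot_{\min}(A/\mathcal{I}) \ra \mS \ot_{\mathrm{ess}}(A/\mathcal{I})$ are complete order isomorphisms, and hence extend to complete order isomorphisms of the corresponding completions. Consequently $\mS \bar{\ot} \mathcal{I}$ is literally the same subspace of $\mS \hat{\ot}_{\min} A = \mS \hat{\ot}_{\mathrm{ess}} A$ regardless of which tensor norm is used to form the closure of $\mS \ot \mathcal{I}$, and it is a kernel in either completion. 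By naturality of the quotient construction --- both $q_{\min}$ and its $\mathrm{ess}$-analogue are the maps induced on quotients by $\mathrm{id}_{\mS} \ot \pi$, where $\pi \colon A \ra A/\mathcal{I}$ is the canonical quotient $*$-homomorphism, and the nuclearity isomorphisms above restrict to the identity on the algebraic tensor products --- the map $q_{\min}$ is identified with
\[
q_{\mathrm{ess}} \colon \frac{\mS \hat{\ot}_{\mathrm{ess}} A}{\mS \bar{\ot} \mathcal{I}} \ra \mS \hat{\ot}_{\mathrm{ess}}(A/\mathcal{I}).
\]
Now \Cref{ess-el} says precisely that $q_{\mathrm{ess}}$ is a unital complete order isomorphism; hence so is $q_{\min}$, so $\mS$ is exact and therefore $(\min,\mathrm{el})$-nuclear by \Cref{stability}$(i)$.

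The only point requiring care is the identification of $q_{\min}$ with $q_{\mathrm{ess}}$, which is a routine diagram chase resting on the uniqueness of the map induced on an operator system quotient and on the fact that every map occurring agrees with $\mathrm{id}_{\mS} \ot \pi$ on the dense subspace $\mS \ot A$. There is no genuine analytic obstacle here: the substantive work is already contained in \Cref{ess-el}, and this corollary is merely its bookkeeping consequence.
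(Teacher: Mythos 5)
Your argument is exactly the paper's intended one: the paper derives \Cref{ess-to-el} directly from \Cref{ess-el}, noting (as you do) that $(\min,\mathrm{ess})$-nuclearity lets one replace $\mathrm{ess}$ by $\min$ in that quotient identification, so the exactness condition of \Cref{exactness} holds and \Cref{stability}$(i)$ gives $(\min,\mathrm{el})$-nuclearity. Your extra care about identifying the two induced maps on the common completion is correct and unproblematic, so the proposal is sound and matches the paper's approach.
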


As remarked earlier, we don't know whether $\mathrm{el} \leq \mathrm{ess}$ or not but
\Cref{ess-to-el} does hint that it could very well be true.

The converse to \Cref{ess-to-el} is false. Consider the free group
$\F_n$ with $n$ generators and $n\geq 2$. Its reduced group
$C^*$-algebra $C_r^*(\F_n)$ is exact, i.e., $(\min,
\mathrm{el})$-nuclear (\cite{brownozawa, pisier}) but not
$C^*$-nuclear and hence, by \Cref{mainresult}, it is not $(\min,
\mathrm{ess})$-nuclear.
\vspace*{1mm}

 Before moving on to the section on operator systems associated to
 discrete groups, we point out that, unlike the category of
 $C^*$-algebras, exactness, (min, ess)-nuclearity, (min, c)-nuclearity and (min,
 max)-nuclearity do not pass to operator system quotients.
 
\begin{corollary}
$(\min, \mathrm{ess})$-nuclearity does not pass to operator system quotients.
\end{corollary}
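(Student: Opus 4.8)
The plan is to produce a concrete counterexample: an operator system $\mS$ that is $(\min, \mathrm{ess})$-nuclear together with a kernel $J \subset \mS$ such that the quotient $\mS/J$ fails to be $(\min, \mathrm{ess})$-nuclear. The natural place to look is among finite dimensional operator systems, because \Cref{nuclear-cover} tells us that \emph{any} operator subsystem of a finite dimensional $C^*$-algebra is automatically $(\min, \mathrm{ess})$-nuclear, so the source object is easy to arrange; the difficulty is entirely in making the quotient non-nuclear in the $(\min, \mathrm{ess})$ sense.

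First I would recall, from the theory of operator system quotients in \cite{KPTT2}, that the dual of a quotient is (completely order isomorphic to) a subsystem of the dual, and dually, every finite dimensional operator system arises as a quotient of a subsystem of a finite dimensional $C^*$-algebra. More precisely, for a finite dimensional operator system $\mT$, one has that $\mT$ is a quotient $\mS/J$ where $\mS$ is an operator subsystem of some $M_k$ (this is the standard duality: $\mT^d$ embeds completely order isomorphically into some $M_k$ by \cite[Theorem 4.4]{opsysqt} type results, and then dualizing the inclusion $\mT^d \hookrightarrow M_k$ gives a quotient map $M_k = M_k^d \twoheadrightarrow \mT^{dd} = \mT$, whose restriction to an appropriate subsystem $\mS$ still surjects onto $\mT$ — or more cleanly, $\mT = (\mT^d)^d$ and $\mT^d \subset M_k$, so $\mT$ is a quotient of $M_k$, hence a fortiori a quotient of a subsystem of a $C^*$-algebra). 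Now take $\mT = \mS_2$, the $(2\cdot 2 + 1 = 5)$-dimensional operator system of the free group $\F_2$. By \Cref{mainresult2} (applied with $A = C^*(\F_2)$, using \Cref{gpenvelope}, since $\mS_2$ contains enough unitaries), $\mS_2$ is \emph{not} $(\min, \mathrm{ess})$-nuclear, because $C^*(\F_2)$ is not nuclear. On the other hand, writing $\mS_2 = \mS/J$ with $\mS$ a subsystem of a finite dimensional $C^*$-algebra, \Cref{nuclear-cover} gives that $\mS$ \emph{is} $(\min, \mathrm{ess})$-nuclear. This exhibits the failure.

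The key steps, in order, are: (1) invoke \Cref{mainresult2} plus \Cref{gpenvelope} to record that $\mS_2$ is not $(\min, \mathrm{ess})$-nuclear; (2) realize $\mS_2$ as a quotient of a (finite dimensional) operator subsystem $\mS$ of a finite dimensional $C^*$-algebra, via the duality $\mS_2 \cong (\mS_2^d)^d$ together with the complete order embedding $\mS_2^d \subset M_4$ already used in \Cref{notnuclear1} — indeed $\mS_2^d \subset M_4$ dualizes to a unital completely positive quotient map $M_4 \twoheadrightarrow \mS_2^{dd} = \mS_2$, so $\mS_2$ is literally a quotient of $M_4$ itself; (3) apply \Cref{nuclear-cover} (or just observe $M_4$ is nuclear and \Cref{mainresult}) to conclude $M_4$ is $(\min, \mathrm{ess})$-nuclear; (4) conclude that $(\min, \mathrm{ess})$-nuclearity is not inherited by the quotient $\mS_2 = M_4/J$.

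The main obstacle — and the only genuinely nontrivial point — is step (2): one must be sure that the operator system quotient of $M_4$ by the kernel $J = \ker(M_4 \twoheadrightarrow \mS_2^{dd})$ is, with its quotient operator system structure from \cite[Proposition 3.4]{KPTT2}, completely order isomorphic to $\mS_2$ and not merely isomorphic as a vector space with a coarser cone. This is exactly the content of the duality theory: the dual of the quotient map $\mS_2^d \hookrightarrow M_4$ is the inclusion $M_4^d = M_4 \to (\mS_2^d)^d = \mS_2^{dd}$, and since $\mS_2$ is finite dimensional, $\mS_2^{dd} = \mS_2$ completely order isomorphically (\cite[Corollary 4.5]{choi}, \cite[Proposition 6.2]{KPTT2}); the quotient of $M_4$ by the kernel of a unital completely positive map with finite dimensional range is canonically the range by \cite[Proposition 3.4]{KPTT2} together with the fact that for maps \emph{out of} a $C^*$-algebra (or any system where the relevant cones are already closed) the quotient cone $\mC_n$ coincides with $\mD_n$. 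Once this identification is in hand, the corollary is immediate from the earlier results, so I would keep the write-up short and simply cite \Cref{notnuclear1}, \Cref{mainresult2}, \Cref{gpenvelope} and \Cref{nuclear-cover}.
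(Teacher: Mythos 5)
Your overall strategy is the same as the paper's: exhibit a matrix algebra (hence, by \Cref{mainresult}, a $(\min,\mathrm{ess})$-nuclear operator system) with a kernel whose quotient is a free-group-type operator system containing enough unitaries of a non-nuclear $C^*$-algebra, and then apply \Cref{mainresult2}. Steps (1), (3) and (4) of your outline are fine. The gap is exactly where you located it, in step (2), and your attempt to close it does not work as written. The claim that ``the quotient of $M_4$ by the kernel of a unital completely positive map with finite dimensional range is canonically the range'' is false in general: the induced map $M_4/\ker\varphi \to \operatorname{ran}\varphi$ is always a unital completely positive bijection, but a ucp bijection need not be a complete order isomorphism, and saying that it is one is precisely the nontrivial ``complete quotient map'' property. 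Your appeal to closedness of cones only addresses the Archimedeanization issue $\mC_n=\D_n$; it says nothing about whether the quotient cones agree with the cones pulled back from the range. (Indeed, identifications such as $M_n/J_n\cong\mathcal{W}_n$ and $T_3/J_3\cong\mS_2$ are theorems of Farenick--Paulsen and Kavruk requiring genuine work, not formal consequences of having a ucp surjection.) If instead you invoke the finite-dimensional duality theorem (a ucp map is a complete quotient map if and only if its dual is a complete order injection), you must first produce a \emph{unital} map $M_4\to\mS_2$ dualizing the embedding $\mS_2^d\subset M_4$; but the order unit of $M_4^d\cong M_4$ is a chosen faithful state (the normalized trace), units on duals are non-canonical, and the dual of a unital complete order embedding need not carry that unit to $1\in\mS_2$. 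Without unitality you obtain at best a possibly non-unital complete order isomorphism $M_4/J\cong\mS_2$, and since $\min$, $\mathrm{ess}$ and the $C^*$-envelope all depend on the order unit, the failure of $(\min,\mathrm{ess})$-nuclearity does not transfer automatically. Your side remark that every finite dimensional operator system is a quotient of a subsystem of a finite dimensional $C^*$-algebra is likewise unjustified (and is not needed).

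The repair is to cite an established unital identification instead of deriving one by duality: the paper uses Farenick--Paulsen's theorem that $M_n/J_n$ is unitally completely order isomorphic to $\mathcal{W}_n=\mathrm{span}\{u_iu_j^*\}\subset C^*(\F_{n-1})$ for $n\geq 3$, so that $C^*_e(M_n/J_n)=C^*(\F_{n-1})$ and \Cref{mainresult2} applies, while $M_n$ itself is $(\min,\mathrm{ess})$-nuclear; alternatively, Kavruk's result that $T_3/J_3$ is unitally completely order isomorphic to $\mS_2$ (with $T_3\subset M_3$ the tridiagonal system) implements your idea of using $\mS_2$ directly. With either citation in place of your step (2), the rest of your argument goes through.
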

\begin{proof}
For each $n \geq 2$, let ${J}_n \subset {M}_n(\C)$ be the kernel
(\cite[$\S 2$]{opsysqt}) in ${M}_n(\C)$ consisting of all diagonal
matrices $D \in {M}_n(\C)$ with $\mathrm{tr}(D)=0$; and,
$\mathcal{W}_n$ be the operator subsystem of $C^*(\mathbb{F}_{n-1})$
spanned by $\{u_iu_j^* : 1 \leq i,j \leq n\}$ where $u_2,\ldots, u_n$
are the universal unitaries that generate $C^*(\F_{n-1})$ and
$u_1:=1$. For $n \geq 3$, by \cite[Theorem 2.4]{opsysqt}, ${M}_n/J_n$
is completely order isomorphic to $\mathcal{W}_n$ and $
C^*_e(\mathcal{W}_n) = C^*_e({M}_n/J_n) =C^*(\F_{n-1})$. By
\Cref{mainresult2}, we deduce that the quotient ${M}_n/J_n$ is not $(\min,
\mathrm{ess})$-nuclear.
\end{proof}

 In addition, the example of $M_n/J_n$ also shows that
 $C^*_e({M}_n/J_n)$ is isomorphic to $C^*(\F_{n-1})$, which is not exact
 (\cite[$\S 17 $]{pisier}).  Now, since $\mathcal{W}_n$ contains
 enough unitaries of $C^*(\mathbb{F}_{n-1})$, $\mathcal{W}_n$ and
 hence ${M}_n/J_n$ is not exact (by \cite[Corollary $9.6$]{KPTT2}). In
 particular, exactness, $\mathrm{(min, c)}$-nuclearity, and
 $\mathrm{(min, max)}$-nuclearity do not pass to quotients in the
 category of operator systems as well.
\vspace*{1mm}

 Note that, Kavruk (\cite[Theorem $10.2$]{kavnuc}) showed that the
 quotient operator system $M_3/J_3$ has a deep relationship with
 Kirchberg's conjecture. It was established that Kirchberg's
 conjecture has a positive answer if and only if $M_3/J_3$ possesses
 the DCEP. There is one more quotient operator system which is equally
 deeply related to Kirchberg's conjecture, namely, $T_n /J_n$
 where $T_n:=\{[a_{i,j}] \in M_n : a_{i, j} = 0 \ \text{if}\ | i - j|
 > 1 \}$. It was also established, in \cite[Corollary
   $10.6$]{kavnuc}, that  Kirchberg's conjecture is true if and
 only if $T_3 /J_3$ possesses the DCEP. This was a consequence of the
 fact \cite[Theorem $10.5$]{kavnuc} that $T_n /J_n$ is completely
 order isomorphic to $\mS_{n-1}$ for all $n \geq 3$. We easily deduce,
 again from \Cref{mainresult2}, that the quotient operator system $T_3
 /J_3$ is not $(\min, \mathrm{ess})$-nuclear.
\vspace*{1mm}

Furthermore, as observed in \Cref{comparison}, the dual $\mS_2^d$ of the free group operator
system $\mS_2$  is
(min, ess)-nuclear, whereas $\mS_2^{dd}= \mS_2$ (upto complete order
isomorphism) is not (min, ess)-nuclear and hence we also observe that:
\begin{corollary}
$(\min, \mathrm{ess})$-nuclearity does not pass to operator system  duals.
\end{corollary}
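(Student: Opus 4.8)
The plan is to exploit what has already been
assembled in this section: the dual $\mS_2^d$ of the free group
operator system is $(\min, \mathrm{ess})$-nuclear (this was observed in
\Cref{comparison}, using that $\mS_2^d$ embeds completely order
isomorphically into $M_4$, so it has a finite dimensional---hence
nuclear---$C^*$-cover, and \Cref{mainresult} applies), while its
second dual $\mS_2^{dd}$ is, up to complete order isomorphism, $\mS_2$
itself (by \cite[Proposition $6.2$]{KPTT2} together with the
finite-dimensionality of $\mS_2$). So $\mS_2^d$ is a witnessing
example: a $(\min, \mathrm{ess})$-nuclear operator system whose dual
$\mS_2^{dd} \cong \mS_2$ is \emph{not} $(\min, \mathrm{ess})$-nuclear.

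The step that carries the real content is the non-nuclearity of
$\mS_2$ in the $(\min, \mathrm{ess})$ sense, and this is precisely
where \Cref{mainresult2} does the work: by \Cref{gpenvelope} (or
directly by \Cref{unitary-envelope}, since $\mS_2$ contains enough
unitaries of $C^*(\F_2)$), we have $C^*_e(\mS_2) = C^*(\F_2)$, which is
not nuclear because $\F_2$ is not amenable (\cite{lan}). Hence
\Cref{mainresult2} yields that $\mS_2$ is not $(\min,
\mathrm{ess})$-nuclear. Combining this with the first paragraph, the
map $\mS \mapsto \mS^d$ sends the $(\min, \mathrm{ess})$-nuclear
operator system $\mS_2^d$ to $\mS_2^{dd} \cong \mS_2$, which is not
$(\min, \mathrm{ess})$-nuclear, so $(\min, \mathrm{ess})$-nuclearity is
not preserved under passing to duals.

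The only point requiring a little care is that duality is only a
well-behaved operation here in the finite dimensional setting---$\mS^d$
need not be an operator system for general $\mS$, as recalled before
\Cref{ess-e1}---but $\mS_2$ is $(2\cdot 2+1)=5$-dimensional, so $\mS_2^d$ and
$\mS_2^{dd}$ are genuine operator systems and $\mS_2^{dd}$ is unitally
completely order isomorphic to $\mS_2$ by \cite[Proposition
$6.2$]{KPTT2}. I do not anticipate any serious obstacle; the statement
is essentially a repackaging of the two observations already made in
\Cref{comparison}, and the proof is a two-line deduction citing
\Cref{mainresult}, \Cref{mainresult2}, \Cref{gpenvelope} and
\cite[Proposition $6.2$]{KPTT2}.
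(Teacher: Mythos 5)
Your proposal is correct and follows essentially the same route as the paper: the paper likewise uses the observation from \Cref{comparison} that $\mS_2^d$ (being completely order embeddable in $M_4$, hence with nuclear $C^*$-envelope) is $(\min,\mathrm{ess})$-nuclear, together with $\mS_2^{dd}\cong\mS_2$ and the failure of $(\min,\mathrm{ess})$-nuclearity for $\mS_2$ via \Cref{mainresult2}. Your added remarks on finite-dimensionality and \cite[Proposition $6.2$]{KPTT2} merely make explicit what the paper leaves implicit.
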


\section{Nuclearity of group operator systems}\label{s:5}

Recall that if $\mS(\mF)$ is a $(\min, \mathrm{c})$-nuclear operator
system then $C^*(G)$ is a nuclear $C^*$-algebra (\cite[Corollary
  $9.6$]{KPTT2}) and since the group $C^*$-algebra of a discrete group
is nuclear if and only if the group is amenable (\cite{lan,
  brownozawa}), it follows that the group is amenable. By
\Cref{mainresult2}, we realize that amenability can, in fact, be
recovered from $(\min, \mathrm{ess})$-nuclearity itself. Suprisingly, it
is not yet clear (at least, to us) whether the amenability of a group
guarantees $(\min,\mathrm{ c})$-nuclearity of an operator system
associated to a generating set of the group. The following can be
treated as a small step in this direction, whose proof is now an
immediate consequence of \Cref{gpenvelope} and \Cref{mainresult2}:

\begin{theorem}\label{gp-nuclearity}
Let $\mF$ be a generating set of a discrete group $G$. Then the operator systems
$\mS(\mF)$ and $\mS_r(\mF)$ are $(\min, \mathrm{ess})$ nuclear if and
only if $G$ is amenable.
\end{theorem}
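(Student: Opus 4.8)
The plan is to obtain \Cref{gp-nuclearity} as a direct corollary of \Cref{mainresult2}, feeding it the identifications of the $C^*$-envelopes from \Cref{gpenvelope} and \Cref{r-gpenvelope}. First I would verify the hypothesis of \Cref{mainresult2}, namely that $\mS(\mF)$ contains enough unitaries of its $C^*$-envelope. This is immediate from the definition: $\mS(\mF) = \mathrm{span}\{1, u, u^* : u \in \mF\} \subseteq C^*(G)$, each $u \in \mF$ is a unitary of $C^*(G)$, and, $\mF$ being a generating set of $G$, these unitaries generate $C^*(G)$ as a $C^*$-algebra; the identical argument shows $\mS_r(\mF)$ contains enough unitaries of $C^*_r(G)$. (This is exactly the content encoded by \Cref{gpenvelope} and \Cref{r-gpenvelope} via \Cref{unitary-envelope}.)

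With the hypothesis in place, \Cref{mainresult2} applied to $\mS(\mF) \subseteq C^*(G)$ yields that $\mS(\mF)$ is $(\min, \mathrm{ess})$-nuclear if and only if $C^*(G)$ is a nuclear $C^*$-algebra, and applied to $\mS_r(\mF) \subseteq C^*_r(G)$ it yields that $\mS_r(\mF)$ is $(\min, \mathrm{ess})$-nuclear if and only if $C^*_r(G)$ is nuclear. It then only remains to convert nuclearity of the two group $C^*$-algebras into amenability of $G$.

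For the full group $C^*$-algebra this is Lance's theorem (\cite{lan}): $C^*(G)$ is nuclear if and only if $G$ is amenable. For the reduced group $C^*$-algebra one argues that if $G$ is amenable then $C^*_r(G) = C^*(G)$ is nuclear, while conversely nuclearity of $C^*_r(G)$ already forces $G$ to be amenable (\cite{brownozawa}). Combining these three equivalences with the two assertions of the previous paragraph gives the claimed characterization for both $\mS(\mF)$ and $\mS_r(\mF)$.

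I do not expect a serious obstacle here once \Cref{mainresult2} is available: the "enough unitaries" condition is built into the definition of the group operator systems, and the rest is a formal substitution. The only point requiring a moment's care is the reduced case, where one must invoke the (standard) fact that nuclearity of $C^*_r(G)$ — not merely of $C^*(G)$ — suffices to characterize amenability of $G$; if one prefers to avoid that fact, one can instead note that amenability is equivalent to $C^*(G) = C^*_r(G)$ together with the full-group statement.
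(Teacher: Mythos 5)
Your proposal is correct and follows exactly the paper's route: the theorem is deduced as an immediate consequence of \Cref{mainresult2} together with the enough-unitaries identifications $C^*_e(\mS(\mF)) = C^*(G)$ and $C^*_e(\mS_r(\mF)) = C^*_r(G)$ from \Cref{gpenvelope} and \Cref{r-gpenvelope}, plus Lance's characterization of amenability via nuclearity of the group $C^*$-algebras. Your extra care about the reduced case (nuclearity of $C^*_r(G)$ forcing amenability, or alternatively using $C^*(G)=C^*_r(G)$) is exactly the standard fact the paper implicitly invokes.
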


Note that \Cref{gp-nuclearity}, kind of, falls in line with the
fact that amenability of a discrete group is equivalent to the
nuclearity of its reduced group $C^*$-algebra $C^*_r(G)$ and  injectivity of its group von
Neumann algebra $L(G)$ (\cite[$\S 3$]{brownozawa}, \cite{lan}).

We now move towards identifying the $(\min, \max)$-nuclear group
opertor systems associated to finitely generated groups. For a finite
dimensional operator system $\mS$, it is known that $\mS$ is
$\mathrm{(c,max)}$-nuclear if and only if it is unitally completely
order isomorphic to a $C^*$-algebra (\cite[ Proposition 4.12]
{kavnuc}). In fact, since $\mathrm{ess} \leq \mathrm{c}$, this allows
us to conclude the following:

  \begin{proposition}\label{fd} 
Let $\mS$ be a  finite dimensional operator system. Then the following are equivalent:
\begin{enumerate}[(i)] 
\item $\mS$ is $(\mathrm{ess}, \max)$-nuclear.
\item $\mS$ is $(\mathrm{c}, \max)$-nuclear.
\item $\mS$ is unitally completely order isomorphic to a $C^*$-algebra.
\item $\mS$ is $(\min, \max)$-nuclear.
\end{enumerate}
\end{proposition}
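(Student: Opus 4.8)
The plan is to establish the cycle of implications (iv) $\Rightarrow$ (i) $\Rightarrow$ (ii) $\Leftrightarrow$ (iii) $\Rightarrow$ (iv). None of these requires more than bookkeeping with the lattice of operator system tensor products together with the characterizations already recorded in Sections~\ref{s:2}--\ref{s:4}.

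First I would dispose of the two implications that flow purely from the position of $\mathrm{ess}$ in the lattice. Recall that $\min$ is the minimal and $\max$ the maximal operator system tensor product and that $\mathrm{ess}\leq\mathrm{c}$ by \cite[$\S 2$]{disgrp}, so that $\min\leq\mathrm{ess}\leq\mathrm{c}\leq\max$. Using the elementary observation (noted in Section~\ref{s:3}) that for $\sigma\leq\tau\leq\rho$ an operator system is $(\sigma,\rho)$-nuclear precisely when it is both $(\sigma,\tau)$- and $(\tau,\rho)$-nuclear, the implication (iv) $\Rightarrow$ (i) is immediate (from $(\min,\max)$-nuclearity extract $(\mathrm{ess},\max)$-nuclearity), and so is (i) $\Rightarrow$ (ii) (from $(\mathrm{ess},\max)$-nuclearity extract $(\mathrm{c},\max)$-nuclearity). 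Note that finite-dimensionality of $\mS$ plays no role in either of these steps.

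The equivalence (ii) $\Leftrightarrow$ (iii) is exactly \Cref{stability}$(iv)$ (equivalently \cite[Proposition $4.12$]{kavnuc}), so I would simply quote it. The only step carrying genuine content is (iii) $\Rightarrow$ (iv), and this is where $\dim\mS<\infty$ enters: if $\mS$ is unitally completely order isomorphic to a unital $C^*$-algebra $A$, then $A$ is finite dimensional, hence a finite direct sum of full matrix algebras, hence nuclear; by \cite[Corollary $6.8$]{KPTT1} a nuclear $C^*$-algebra is $(\min,\max)$-nuclear as an operator system, and since $(\min,\max)$-nuclearity is invariant under unital complete order isomorphism, $\mS$ itself is $(\min,\max)$-nuclear. (Alternatively one can route this through \Cref{rigidity}$(iii)$, which gives $C^*_e(\mS)=\mS\cong A$ nuclear, so that \Cref{mainresult} yields $(\min,\mathrm{ess})$-nuclearity of $\mS$; combined with the $(\mathrm{ess},\max)$-nuclearity produced along the way, this again gives $(\min,\max)$-nuclearity.) This closes the cycle.

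I do not expect a real obstacle here: the proposition is essentially a repackaging of \Cref{stability}$(iv)$, the placement $\mathrm{ess}\leq\mathrm{c}$, and the triviality that finite-dimensional $C^*$-algebras are nuclear. The single point to be careful about is purely organizational, namely that the implications selected genuinely loop through all four items so that the four conditions are mutually equivalent; the order above is chosen to guarantee exactly that.
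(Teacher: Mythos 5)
Your proof is correct and is essentially the argument the paper intends: the paper deduces the proposition from \Cref{stability}$(iv)$ (Kavruk's characterization of $(\mathrm{c},\max)$-nuclearity) together with the placement $\min\leq\mathrm{ess}\leq\mathrm{c}\leq\max$ and the lattice splitting of nuclearity, exactly as you do, with (iii)$\Rightarrow$(iv) coming from nuclearity of finite-dimensional $C^*$-algebras via \cite[Corollary $6.8$]{KPTT1}. Only your parenthetical alternative for (iii)$\Rightarrow$(iv) is shaky as worded, since the ``$(\mathrm{ess},\max)$-nuclearity produced along the way'' is not yet available at that point of the cycle; the primary route you give does not need it, so the proof stands.
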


Since an injective operator system admits the structure of a $C^*$-algebra
(\cite[Theorem 3.1]{choi}), \Cref{fd} has the following
immediate consequence:

\begin{corollary}\label{injective}
 Let $\mF$ be a finite generating set of a discrete group $G$. Then
 $\mS(\mF)$ is $\mathrm{(min,max)}$-nuclear if and only if $\mS(\mF)$
 is injective.
\end{corollary}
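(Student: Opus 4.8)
The plan is to derive the corollary directly from \Cref{fd} together with the Choi--Effros structure theorem for injective operator systems. Since $\mF$ is finite, $\mS(\mF)$ is a finite dimensional operator system, so all four equivalent conditions of \Cref{fd} are available; in particular $\mS(\mF)$ is $(\min,\max)$-nuclear precisely when it is unitally completely order isomorphic to a $C^*$-algebra. So the only thing I really need to verify is that, for a finite dimensional operator system, being unitally completely order isomorphic to a $C^*$-algebra is equivalent to being injective.

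One implication is immediate: by \cite[Theorem 3.1]{choi} every injective operator system carries a Choi--Effros multiplication making it a unital $C^*$-algebra with the given matrix order, so if $\mS(\mF)$ is injective then it is unitally completely order isomorphic to a $C^*$-algebra, and \Cref{fd} $(iii)\Rightarrow(iv)$ then gives $(\min,\max)$-nuclearity.

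For the converse I would start from $(\min,\max)$-nuclearity of $\mS(\mF)$; \Cref{fd} then provides a unital complete order isomorphism of $\mS(\mF)$ onto a $C^*$-algebra $A$, which is finite dimensional because $\mS(\mF)$ is. Writing $A\cong\bigoplus_i M_{n_i}$ and $N=\sum_i n_i$, I would realize $A$ unitally as a block-diagonal $C^*$-subalgebra of $M_N$ and invoke the unital completely positive conditional expectation $E\colon M_N\to A$ that compresses away the off-diagonal blocks: composing an $M_N$-valued extension (available since $M_N$ is injective) with $E$ recovers an $A$-valued extension, so $A$ is injective, and injectivity transfers back to $\mS(\mF)$ along the complete order isomorphism.

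I do not expect a genuine obstacle here: the proof is a short concatenation of \Cref{fd}, the Choi--Effros theorem and the (elementary) injectivity of finite dimensional $C^*$-algebras, the last being the only point that needs a sentence of justification, which the conditional-expectation argument above supplies.
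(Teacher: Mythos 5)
Your argument is correct and follows essentially the same route as the paper, which presents the corollary as an immediate consequence of \Cref{fd} together with \cite[Theorem 3.1]{choi}. The only extra content you supply is the (standard) fact that a finite dimensional $C^*$-algebra is injective, via the block-diagonal conditional expectation $E\colon M_N\to\bigoplus_i M_{n_i}$, a detail the paper leaves implicit.
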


We now provide the promised exhaustive list of nuclear group operator
systems associated to minimal generating sets of finitely generated
groups.
\begin{theorem}\label{classfn}
 Let $\mF$ be a  generating set of a finitely generated  discrete group $G$. 
\begin{enumerate}[(i)]
\item If $\mF$  is finite and $\mS(\mF)$  is $\mathrm{(min,max)}$-nuclear, then $G$ is
  finite. 
\item If $\mF$ is a minimal generating set, then $\mS(\mF)$ is
  $\mathrm{(min, max)}$-nuclear if and only if $G$ is of order less
  than or equal to $3$.
\end{enumerate}
\end{theorem}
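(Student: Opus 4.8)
The proof of both parts rests on a single reduction: if $\mS(\mF)$ is $(\min,\max)$-nuclear, then, viewed inside $C^*(G)$, the operator system $\mS(\mF)$ already equals all of $C^*(G)$, so that $G=\{1\}\cup\mF\cup\mF^{-1}$, where $\mF^{-1}:=\{u^{-1}:u\in\mF\}$. Indeed, since $\min\le\mathrm{c}\le\max$, a $(\min,\max)$-nuclear operator system is in particular $(\mathrm{c},\max)$-nuclear, hence, by \Cref{stability}$(iv)$, unitally completely order isomorphic to a $C^*$-algebra; \Cref{rigidity}$(iii)$ then gives $\mS(\mF)=C^*_e(\mS(\mF))$, which \Cref{gpenvelope} identifies with $C^*(G)$ via a $*$-isomorphism fixing $\mS(\mF)$. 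Hence $\mS(\mF)=C^*(G)$ as subsets of $C^*(G)$; but $\mS(\mF)=\mathrm{span}\{1,u,u^*:u\in\mF\}$ lies inside the group algebra $\C[G]$, in which the group elements are linearly independent, so every $g\in G$ must already belong to $\{1\}\cup\mF\cup\mF^{-1}$. This proves the reduction, and part $(i)$ is then immediate, as it forces $|G|\le 1+2|\mF|<\infty$.

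For part $(ii)$, the ``if'' direction is a short verification: if $|G|\le 3$ then $G$ is trivial, $\Z/2$, or $\Z/3$, a minimal generating set $\mF$ is $\emptyset$ or a singleton, and in each case a dimension count (dimensions $1$, $2$, $3$ respectively) shows $\mS(\mF)=C^*(G)$, a finite-dimensional $C^*$-algebra, so $\mS(\mF)$ is $(\min,\max)$-nuclear by \Cref{fd}. For the ``only if'' direction, assume $\mS(\mF)$ is $(\min,\max)$-nuclear, so by the reduction $G=\{1\}\cup\mF\cup\mF^{-1}$; I claim $|\mF|\le 1$. First, $1\notin\mF$, for otherwise $\mF\setminus\{1\}$ still generates $G$, contradicting minimality. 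Suppose $|\mF|\ge 2$ and fix distinct $a,b\in\mF$; then $ab\in\{1\}\cup\mF\cup\mF^{-1}$, and a short case analysis shows that in every case some element $x\in\mF$ ends up lying in $\langle\mF\setminus\{x\}\rangle$, contradicting minimality: for instance $ab=1$ gives $b=a^{-1}\in\langle a\rangle$; $ab=c^{\pm1}$ with $c\in\mF\setminus\{a,b\}$ gives $c^{\pm1}\in\langle a,b\rangle$; $ab=a^{-1}$ gives $b=a^{-2}\in\langle a\rangle$; $ab=b^{-1}$ gives $a=b^{-2}\in\langle b\rangle$; and $ab\in\{a,b\}$ gives $a=1$ or $b=1$. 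Hence $|\mF|\le 1$: if $\mF=\emptyset$ then $G=\{1\}$, and if $\mF=\{a\}$ with $a\ne1$ then $G=\langle a\rangle=\{1,a,a^{-1}\}$, which forces $\mathrm{ord}(a)\in\{2,3\}$ (for any larger order $a^2\notin\{1,a,a^{-1}\}$), so $|G|=\mathrm{ord}(a)\le3$.

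The only substantive step is the opening reduction $\mS(\mF)=C^*(G)$, where \Cref{stability}$(iv)$, \Cref{rigidity}, and \Cref{gpenvelope} carry the weight; all that follows is elementary combinatorics of generating sets. I foresee no genuine obstacle, but two points warrant care: $\mF$ is not assumed finite in $(ii)$, so $|\mF|<\infty$ must be derived (as above) rather than presupposed; and the case analysis of $ab$ should be arranged so that each branch visibly produces a redundant generator.
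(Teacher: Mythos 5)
Your proposal is correct and follows essentially the same route as the paper: reduce $(\min,\max)$-nuclearity to $\mS(\mF)=C^*_e(\mS(\mF))=C^*(G)$ via \Cref{fd}/\Cref{stability}$(iv)$, \Cref{rigidity}$(iii)$ and \Cref{gpenvelope}, i.e.\ to the condition $\mF\cup\mF^{-1}\cup\{e\}=G$, and then finish with the same combinatorics of generating sets (your explicit case analysis of $ab$ is exactly what the paper's linear-independence-by-minimality argument leaves implicit). The only cosmetic difference is that the paper first notes that a minimal generating set of a finitely generated group is finite and then invokes the finite-dimensional statement \Cref{fd}, whereas you invoke \Cref{stability}$(iv)$ directly and deduce $|\mF|\le 1$ afterwards.
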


\begin{proof}
We prove both assertions simultaneously. Note that since $G$ is
finitely generated, $\mF$ is finite in $(ii)$ as well.  So, by
\Cref{fd}, $\mS(\mF)$ is (min,max)-nuclear if and only if $\mS(\mF)$
is completely order isomorphic to a $C^*$-algebra. Then, by
\Cref{rigidity}$(iii)$, $\mS(\mF)$ is completely order isomorphic to a
$C^*$-algebra if and only if $\mS(\mF)=C^*_e(\mS(\mF))=C^*(G)$, which
is true if and only if $\mF \cup \mF^{-1} \cup \{ e\} = G$, where
$\mF^{-1}:= \{ u^{-1} : u \in \mF\}$.

Suppose $\mS(\mF)$ is $(\min, \max)$-nuclear. Since $\mF$ is finite,
$\mS(\mF) = C^*(G)$ is finite dimensional and hence $G$ is
finite. Note that if $|\mF| > 1$ and $a, b \in \mF$ with $a \neq b$
then, by minimality of $\mF$, the set $\{ ab\} \cup \mF \cup \mF^{-1}
\cup \{e\}$ is linearly independent in $C^*(G)$ and therefore $ab
\notin \mS(\mF)$. So, $\mF$ must be a singleton, i.e., $G$ must be
cyclic. And clearly, for a singleton $\mF$, the equality $\mF \cup \mF^{-1}
\cup \{ e\} = G$ holds only if $G = \Z_1, \Z_2$ or $\Z_3$. Conversely,
if $G = \Z_1, \Z_2$ or $\Z_3$, then clearly $\mF \cup \mF^{-1} \cup \{
e\} = G$ and $\mS(\mF)$ is then $(\min, \max)$-nuclear.
\end{proof}

Note that minimality of $\mF$ can not be dropped from the statement of
\Cref{classfn}. For example, if $G = \Z_3 \oplus \Z_3$ and $\mF = \{
(1, 0), (1, 1), (0,1), (1, 2)\}$, then $\mS(\mF)$ is $(\min,
\max)$-nuclear but $G$ is neither cyclic nor $|G| \leq 3$. On the
other hand, it is not yet clear whether $\mS(\mF)$ is $(\min,
\max)$-nuclear if $\mF$ is infinite even if it equals $G$ or $G
\setminus \{ e\}$.

We thus obtain yet another collection of finite dimensional (min,
ess)-nuclear operator systems which are not (min, max)-nuclear.
\begin{corollary}
For any finitely generated amenable group $G$ with $|G| \geq 4$ and
any finite generating set $\mF$ of $G$ such that $\mF \cup \mF^{-1}
\cup \{ e\} \neq G$,
$\mS(\mF)$ is $\mathrm{(min, ess)}$-nuclear but not $\mathrm{(min,
  max)}$-nuclear. In particular, $\mathrm{(ess, max)}$-nuclearity does
not pass to operator subsystems.
\end{corollary}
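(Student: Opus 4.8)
The plan is to treat the two nuclearity assertions separately and then deduce the heredity statement formally. Throughout one uses that $\mF$ finite makes $\mS(\mF)$ finite dimensional, and that $C^*_e(\mS(\mF)) = C^*(G)$ by \Cref{gpenvelope}, with $C^*(G)$ nuclear since $G$ is amenable. The positive part is immediate: as $G$ is amenable, \Cref{gp-nuclearity} gives that $\mS(\mF)$ is $(\min, \mathrm{ess})$-nuclear. (Alternatively, $C^*(G)$ is a nuclear $C^*$-cover of $\mS(\mF)$, so \Cref{nuclear-cover} applies.)

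For the negative part I would repeat the argument that appears inside the proof of \Cref{classfn}. Since $\mS(\mF)$ is finite dimensional, \Cref{fd} says $\mS(\mF)$ is $(\min,\max)$-nuclear if and only if it is unitally completely order isomorphic to a unital $C^*$-algebra, and by \Cref{rigidity}$(iii)$ the latter forces $\mS(\mF) = C^*_e(\mS(\mF)) = C^*(G)$. But, viewed inside $C^*(G)$, we have $\mS(\mF) = \mathrm{span}\{g : g \in \mF \cup \mF^{-1} \cup \{e\}\}$ (recall $u^* = u^{-1}$ in $C^*(G)$), and the group elements form a linear basis of the finite dimensional algebra $C^*(G)$; hence $\mS(\mF) = C^*(G)$ would force $\mF \cup \mF^{-1} \cup \{e\} = G$, contrary to hypothesis. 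Therefore $\mS(\mF)$ is not $(\min,\max)$-nuclear.

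For the ``in particular'' clause, recall the lattice observation $\min \leq \mathrm{ess} \leq \max$, so that an operator system is $(\min,\max)$-nuclear precisely when it is both $(\min,\mathrm{ess})$- and $(\mathrm{ess},\max)$-nuclear; since $\mS(\mF)$ is $(\min,\mathrm{ess})$-nuclear but not $(\min,\max)$-nuclear, it cannot be $(\mathrm{ess},\max)$-nuclear. On the other hand $\mS(\mF) \subseteq C^*(G)$, and the nuclear $C^*$-algebra $C^*(G)$ is $(\min,\max)$-nuclear as an operator system by \cite[Corollary 6.8]{KPTT1}, hence $(\mathrm{ess},\max)$-nuclear; thus $\mS(\mF)$ is an operator subsystem of an $(\mathrm{ess},\max)$-nuclear operator system which is itself not $(\mathrm{ess},\max)$-nuclear, and heredity fails. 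I do not expect a real obstacle here: everything is assembled from \Cref{gp-nuclearity}, \Cref{fd}, \Cref{rigidity}, \Cref{gpenvelope} and the lattice bookkeeping, the only non-formal input being the elementary fact that group elements are linearly independent in $C^*(G)$, used to pin down the dimension of $\mS(\mF)$.
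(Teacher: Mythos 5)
Your proposal is correct and follows essentially the route the paper intends: the positive half is \Cref{gp-nuclearity} (equivalently \Cref{nuclear-cover}), and the negative half re-runs the argument in the proof of \Cref{classfn} via \Cref{fd} and \Cref{rigidity}$(iii)$ together with linear independence of group elements in $C^*(G)$, with the hypothesis $\mF \cup \mF^{-1} \cup \{e\} \neq G$ giving the contradiction. Your lattice bookkeeping for the ``in particular'' clause is also fine (and could be shortened by quoting the equivalence of $(\mathrm{ess},\max)$- and $(\min,\max)$-nuclearity for finite dimensional systems already contained in \Cref{fd}).
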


The preceding Corollary also gives examples of operator systems which
are not (min, max)-nuclear and yet possess nuclear $C^*$-envelopes.


\section{Nuclearity of graph operator systems}\label{s:6}

Given a finite graph $G$ with $n$-vertices, Kavruk et al.~in
\cite{KPTT1} associated an operator system $\mS_G$ as the finite
dimensional operator subsystem of $M_n(\C)$ given by
 $$\mS_G=\mathrm{span} \{\{E_{i,j}: (i,j) \in G\}\cup\{E_{i,
  i}: 1 \leq i \leq n\} \} \subseteq M_n(\C),$$ where $\{E_{i,j}\}$ is
the standard system of matrix units in $M_n(\C)$ and $(i, j)$ denotes
(an unordered) edge in $G$.  In view of \Cref{mainresult}, the
graph operator system $\mS_G\subseteq M_n$ is always $(\min,
\mathrm{ess})$-nuclear.

It is known that if $G$ is chordal, i.e., no minimal cycle of $G$ has
length greater than $3$, then $\mS_G$ is $C^*$-nuclear
(\cite[Proposition 6.11]{KPTT1}). It is not known whether the converse
is true or not. However, motivated by the discussions in \cite[$\S
  3$]{OP}, we obtain the following characterization of the $(\min,
\max)$-nuclear graph operator systems.

\begin{theorem}
Let $G$ be a finite graph. Then the associated operator system
$\mS_G$ is $(\min, \max)$-nuclear if and only if each 
component of $G$ is  complete.
\end{theorem}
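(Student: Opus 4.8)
The plan is to reduce the statement to two facts: first, that $\mS_G$ is $(\min,\max)$-nuclear if and only if it is unitally completely order isomorphic to a $C^*$-algebra (by \Cref{fd}, since $\mS_G$ is finite dimensional), equivalently $\mS_G = C^*_e(\mS_G)$ by \Cref{rigidity}$(iii)$; and second, an explicit description of $C^*_e(\mS_G)$ from \cite{OP}. So the first step is to recall from \cite{OP} the identification of the $C^*$-envelope of a graph operator system. For a connected finite graph $G$ on $n$ vertices, the relevant fact is that $C^*_e(\mS_G) = M_n(\C)$ precisely when $G$ is complete, and in general $C^*_e(\mS_G)$ decomposes according to the connected components of $G$. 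I would first treat the connected case and then pass to components.

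For the connected case, I would argue both directions. If $G = K_n$ is complete, then $\mS_G = M_n(\C)$ is already a $C^*$-algebra, so trivially $\mS_G = C^*_e(\mS_G)$ and $(\min,\max)$-nuclearity follows from \Cref{fd} (indeed $M_n$ is nuclear). Conversely, suppose $G$ is connected but not complete, so $\dim \mS_G = n + 2|E(G)| < n^2$. By \cite{OP}, $C^*_e(\mS_G)$ contains $M_n(\C)$ (the envelope of a connected graph operator system is the full matrix algebra, as the off-diagonal matrix units present, together with the diagonal, generate all of $M_n$ whenever $G$ is connected — this is the content of the identification in \cite[$\S 3$]{OP}), hence $\dim C^*_e(\mS_G) \geq n^2 > \dim \mS_G$, so $\mS_G \neq C^*_e(\mS_G)$ and $\mS_G$ cannot be unitally completely order isomorphic to any $C^*$-algebra; by \Cref{fd} it is not $(\min,\max)$-nuclear.

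For the general (disconnected) case, write $G = G_1 \sqcup \cdots \sqcup G_k$ as a disjoint union of its connected components, on vertex sets of sizes $n_1,\dots,n_k$. Then $\mS_G = \mS_{G_1} \oplus \cdots \oplus \mS_{G_k}$ as a block-diagonal operator subsystem of $M_{n_1} \oplus \cdots \oplus M_{n_k} \subseteq M_n$, and one checks that $C^*_e(\mS_G) = C^*_e(\mS_{G_1}) \oplus \cdots \oplus C^*_e(\mS_{G_k})$ (a direct sum of $C^*$-covers is a $C^*$-cover, and the universal property of the envelope respects finite direct sums; alternatively this too is recorded in \cite{OP}). Now $\mS_G = C^*_e(\mS_G)$ if and only if $\mS_{G_i} = C^*_e(\mS_{G_i})$ for every $i$, which by the connected case holds if and only if every $G_i$ is complete. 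Combining with \Cref{fd} gives that $\mS_G$ is $(\min,\max)$-nuclear if and only if every component of $G$ is complete.

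The main obstacle is the precise statement of the $C^*$-envelope identification from \cite{OP} that I am invoking — specifically that for a connected graph the envelope is the full matrix algebra. This requires knowing that the matrix units $E_{i,j}$ for edges $(i,j)$, together with the diagonal, $C^*$-generate all of $M_n$: products like $E_{i,j}E_{j,l} = E_{i,l}$ let one reach matrix units for non-edges along paths, and connectedness ensures every pair of vertices is joined by a path, so one recovers all $E_{i,l}$ and hence $M_n$. This dimension-count argument is exactly what makes the ``only if'' direction work, and it is the step where one genuinely uses \cite{OP} rather than the general machinery developed earlier in the paper. Everything else is a formal consequence of \Cref{fd}, \Cref{rigidity}$(iii)$, and the direct-sum behavior of envelopes.
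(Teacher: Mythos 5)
Your proposal is correct and follows the same skeleton as the paper's proof: reduce via \Cref{fd} and \Cref{rigidity}$(iii)$ to the statement $\mS_G = C^*_e(\mS_G)$, invoke the Ortiz--Paulsen identification of the $C^*$-envelope of a graph operator system, and exploit products of matrix units along paths. The one organizational difference is in the converse direction: the paper applies \cite[Theorem 3.2]{OP} to the whole (possibly disconnected) graph, getting $\mS_G = C^*_e(\mS_G) = C^*(\mS_G) \subseteq M_n(\C)$, and then shows directly that for any two vertices $v \neq w$ in the same component the product $E_{v,i_1}E_{i_1,i_2}\cdots E_{i_r,w} = E_{v,w}$ lies in $\mS_G$, forcing $(v,w)$ to be an edge; you instead split into connected components, use a dimension count ($n + 2|E| < n^2$) in the connected case, and glue via the claim $C^*_e(\mS_{G_1} \oplus \cdots \oplus \mS_{G_k}) = C^*_e(\mS_{G_1}) \oplus \cdots \oplus C^*_e(\mS_{G_k})$. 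That direct-sum claim is true, but as written it is only asserted ("the universal property respects finite direct sums"), and it is the one step of your argument not covered by the results quoted in the paper; either prove it (e.g.\ via boundary ideals, or by noting the direct sum of envelopes is a $C^*$-cover on which the Šilov ideal must vanish componentwise) or, more simply, avoid it as the paper does by applying the Ortiz--Paulsen theorem globally, since $C^*(\mS_G)$ inside $M_n(\C)$ already decomposes along components. With that step either justified or bypassed, your argument is complete and buys nothing essentially different from the published proof.
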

\begin{proof}
Suppose each connected component of $G$ is a complete
graph. Then we easily see that $\mS_G= M_{n_1} \oplus \cdots
\oplus M_{n_k} \subset M_{n}$, where $n_i$'s are the number of vertices
in the connected components of $G$, $k$ is the number of
connected components of $G$ and $n$ is the number of vertices
in $G$. Thus, $\mS_G$ is $(\min, \max)$-nuclear.

Conversely, suppose $\mS_G$ is $(\min, \max)$-nuclear.  Then, by
\Cref{rigidity} $(iii)$, we have $\mS_G= C^*_e(\mS_G)$ and, Ortiz and
Paulsen proved in \cite[Theorem 3.2]{OP} that $C^*_e(\mS_G) =
C^*(\mS_G) \subseteq M_n(\C)$ for any graph $G$. Let $\mcal{F}$ be a connected component
of $G$ and $v \neq w$ be any two vertices in $\mcal{F}$. Then there
exists a sequence of connected edges $(v, i_1), (i_1, i_2), (i_2, i_3),$ $
\ldots, (i_r, w)$ in $\mcal{F}$ connecting $v$ with $w$. Further,
since $\mS_G= C^*(\mS_G)$, we get $E_{v, w} = E_{v, i_1} E_{ i_1, i_2}
\cdots E_{i_{r-1}, i_r} E_{i_r, w} \in \mS_G$. This implies that $(v,
w) \in G$, i.e., $v$ and $w$ are connected by an edge and hence
$\mcal{F}$ is complete.
\end{proof}

\section{Nuclearity properties of some known examples}\label{s:7}

\subsection{Operator systems of Commuting and Non-Commuting n-cubes} 
Inspired by Tsirelson's non-commutative analogues of n-dimensional
cubes, Farenick et al. (in \cite{disgrp}) introduced an $(n+1)$-dimensional operator
system  $NC(n)$ as follows:
\vspace*{1mm}

 Let $\mathcal{G} = \{h_1, . . . , h_n\}$, let
$\mathcal{R} = \{h^*_j = h_j , \|h\| \leq 1, 1 \leq j \leq n\}$ be a
set of relations in the set $\mathcal{G}$, and let 
$C^*(\mathcal{G}|\mathcal{R})$ denote the universal unital $C^*$-algebra
generated by $\mathcal{G}$ subject to the relations $\mathcal{R}$. The operator
system $$NC(n) := span\{1, h_1, ..., h_n\} \subset C^*(\mathcal{G}|\mathcal{R})$$
is called \emph{the operator system of the non-commuting $n$-cube.}

They showed that upto a $*$-isomorphism, $C^*_e (NC(n)) =
C^*(*_n\mathbb{Z}_2)$ (\cite[Corollary $5.6$]{disgrp}) and that
$NC(1)$ is (min,max)-nuclear (\cite[Proposition $6.1$]{disgrp}) and
$NC(2)$ is (min,c)-nuclear (\cite[ Theorem 6.3]{disgrp}). Further, it
follows from \cite[Theorem $6.13$]{disgrp} that $NC(n)$ is not (min,
max)-nuclear for all $n \geq 2$. \Cref{rigidity}$(iii)$ now provides
an alternate proof of this fact.

 Farenick et al. further introduced the {\em operator system of the
   commuting $n$-cube} as the operator subsystem $C(n) \subset
 C([-1, 1]^n)$ given by
$$
C(n) = \mathrm{span}\{ 1, x_1, x_2, \ldots, x_n \}
$$ where $x_i$ is the $i$-th coordinate function on $[-1,
  1]^n$. Clearly, the $C^*$-algebra generated by $C(n)$ in $ C([-1,
  1]^n)$ is commutative. As a concequence, the $C^*$-envelope of
$C(n)$ is also commutative and hence nuclear; in particular, $C(n)$ is
(min, ess)-nuclear.  

There was one more important example introduced in \cite{disgrp},
namely the operator system $\mcal{V} \subset \ell_4^\infty$ given by
\[\mcal{V} :=\{(a, b, c, d) : a +b = c+d\} \subset \ell^\infty_4.
\]
They proved in \cite[Theorem $6.11$]{disgrp} that $\mcal{V}$ is not
(min, max)-nuclear. It will be interesting to investigate the
following:

\vspace*{2mm}

\noindent {\bf Question:} Let $A$ be a unital commutative $C^*$-algebra
and $\mS$ be an operator subsytem of $A$. Is $\mS$ 
$C^*$-nuclear?

\subsection{ Operator system generated by a single operator}

Determining the $C^*$-envelope of a given operator system is in
general quite challenging. However, very recently, Argerami and
Farenick \cite{argerami1,argerami2} considered the operator systems
generated by single operators of certain classes of operators and
successfully calculated their $C^*$-envelopes. The calculation of
these $C^*$-envelopes together with \Cref{mainresult} lead to
interesting examples of finite dimensional (min, ess)-nuclear operator
systems. Further, using \Cref{rigidity}$(iii)$ and simple dimension
comparisons, we check whether these singly generated operator systems
are (min,max)-nuclear or not.
\vspace*{1mm}

The operator system generated by a bounded linear operator $T$ acting
on a complex Hilbert space $\mH$ is defined to be the unital
self-adjoint subspace $\mO\mS(T) = \text{span} \{ 1, T, T^*\} \subset
\B.$ Argerami and Farenick exploited the $\ast$-isomorphism between
$C^*_e(\mO\mS(T))$ and the quotient of $C^*(T)$ by the Silov boundary
ideal of $\mO\mS(T)$ given by Arveson \cite{arveson} in order to do
the explicit calculations of the $C^*$-envelopes. The following
follows from Remarks in \cite{argerami1}.

\begin{example}
\begin{enumerate}[(i)]
\item If $T$ is normal, then, $C^*_e(\mO\mS(T))$ is commutative and
  hence nuclear implying that $\mO\mS(T)$ is
  $\mathrm{(min, ess)}$-nuclear.
\item If $T $ is a contraction such that $\mathbb{T} \subset
  \sigma(T)$, then,
  $C^*_e(\mO\mS(T))=C(\mathbb{T})$ and hence
$\mO\mS(T)$ is a $\mathrm{(min, ess)}$-nuclear.
\item If $T$ is an isometry, then $\mO\mS(T)$ is $\mathrm{(min,
  ess)}$-nuclear. This is true because of (1) and the fact that
  $C^*_e(\mO\mS(T))=C(\mathbb{T})$ if
  $T$ is not unitary. 
\end{enumerate}
\noindent Since the dimensions of the operator systems $\mO\mS(T)$ in
$(1)$, $(2)$ and $(3)$, do not equal the dimensions of their
respective $C^*$-envelopes, $\mO \mS(T) \neq C^*_e(\mO\mS(T))$ in all
the three cases and, therefore, the above operator systems are are not
$(\min, \max)$-nuclear.
\end{example}

For the sake of convenience of the reader, we now recall the
definitions of the classes of operators whose operator systems were
considered in \cite{argerami1} and \cite{argerami2}.

\begin{example} \label{splcase} 
If $\C^\times :=\C \diagdown \{0\}$ and $\xi= (\xi_1, \xi_2, \ldots,
\xi_d) \in (\C^\times)^d$, then \emph{the irreducible weighted
  unilateral shift} with weights $\xi_1, \xi_2, \ldots, \xi_d $ is the
operator $W(\xi)$ on $\C^{d+1}$ given by the
matrix 
$$W(\xi)=\begin{bmatrix} 
0 & & & & 0\\ 
\xi_1 & 0 & & & \\ 
& \xi_2 & \ddots & & \\ 
& & \ddots & 0 & \\ 
& & & \xi_d & 0
\end{bmatrix}.$$

By \cite[Proposition 3.2]{argerami1}, $C^*_e(\mO\mS({W(\xi)}))=
M_{d+1}(\C)$, which is a nuclear $C^*$-algebra and hence
$\mO\mS(W(\xi))$ is $\mathrm{(min, ess)}$-nuclear.  Since
$M_{d+1}(\C)$ is atleast 4 dimensional, $\mO\mS(W(\xi)) \neq
C^*_e(\mO\mS({W(\xi)}))$, which further implies $\mO\mS(W(\xi))$ is
not $(\min, \max)$-nuclear.
\end{example}

\begin{example}
A {\em weighted unilateral shift operator} is an operator $W$ on
$l^2(\mathbb{N})$ whose action on the standard orthonormal basis
$\{e_n: n \in \mathbb{N} \}$ of $l^2(\mathbb{N})$ is given
by $$We_n=w_ne_{n+1}, \; n \in \mathbb{N},$$ where the weight sequence
$\{w_n\}_{n \in \N}$ for $W$ consists of non-negative real numbers
with $\sup_n w_n < \infty$. If there is a $p \in \N$ such that
$w_{n+p}=w_n$ for every $n \in \N$, then $W$ is called a periodic
unilateral weighted shift of period $p$. If at least one of $w_1,
\ldots , w_p$ is not repeated in the list, $W$ is said to be distinct.

By \cite[ Theorem 3.5]{argerami1}, $C^*_e( \mO\mS(W))=C(\mathbb{T}) \ot
M_p(\C),$ which is again a nuclear $C^*$-algebra and hence $\mO  \mS(W)$ is
$\mathrm{(min, ess)}$-nuclear and, clearly not $(\min, \max)$-nuclear.

Note that \Cref{splcase} is a special case of this.

\end{example}

Recall that an operator $J$ on an $n$-dimensional Hilbert space $\mH$
is a basic Jordan block if there is an orthonormal basis of $\mH$ for
which $J$ has a matrix representation of the form $$J_n(\lambda)
:= \begin{bmatrix} \lambda & 1 & 0 & \ldots & 0 \\ 0 & \lambda & 1 &
  \ddots & \vdots \\ \vdots & \ddots & \ddots & \ddots & 0\\ \vdots &
  & \ddots & \ddots & 1 \\ 0 & \ldots & \ldots & 0 & \lambda
\end{bmatrix}$$
for some $\lambda \in \C.$

\begin{example}
Let $J=\bigoplus_{k=1}^{\infty} J_{m_k}(\lambda) \in B(l^2(\N))$ and $m
:= sup\{m_k: k \in \N\}$. Then, 
by \cite[ Proposition 2.2]{argerami2}, we have \[C^*_e(\mO\mS(J))=\begin{cases}
      C(\mathbb{T}), & \text{if}\;m=\infty; \ \text{and} \\
      M_m(\C), & \text{if}\;m<\infty,
          \end{cases}
\] and hence
$\mO\mS(J)$ is $\mathrm{(min, ess)}$-nuclear. As in earlier examples,
one can see that for $m > 1$, $\mO\mS(J)$ is not $(\min, \max)$-nuclear.
\end{example}

An operator $J \in B(l^2(\N))$ is said to be a {\em Jordan operator}
if $J= \bigoplus_j J_{n_j}({\lambda_{j}})$ for some finite or infinite
sequence of basic Jordan operators $J_{n_j}(\lambda_j).$ In this
definition, the $n_j$'s or the $\lambda_j$'s are not required to be
distinct. But repetitions of the same pair $n_j , \lambda_j$ are not
allowed: if a direct sum of $d$ copies of a basic Jordan block $J_n
(\lambda)$ is considered, then it is denoted by $J_n (\lambda) \otimes
1_d$.
 
\begin{example} Consider the Jordan operator $J= \begin{bmatrix}
1 & & & & \\
 & \omega & & & \\
 & & \omega^2 & & \\
 & & & 0 & 1\\
 & & & 0 & 0
\end{bmatrix},$
where $\omega= (-1 -i\sqrt{3})/2$. Then, by \cite[Remark
  2.7]{argerami2}, $C^*_e(\mO\mS(J))= \C^3$, which shows that
$\mO\mS(J)$ coincides with its $C^*$-envelope and hence, is
$\mathrm{(min, max)}$-nuclear.
\end{example}

\begin{example}
If $J=\bigoplus_{k=1}^{n}( J_{m_k}({\lambda_{k}}) \ot I_{d_k})$, with
$\lambda_1 > \lambda_2 > \cdots > \lambda_n$ all real and $\max\{m_2,
\ldots, m_{n-1}\} \leq \min\{m_1, m_n\}$, then, by \cite[Corollary
  2.12]{argerami2},  $C^*_e(\mO\mS(J))$ equals
\[ 
\begin{cases}
M_{m_1}(\C) \oplus M_{m_n}(\C), & \text{if} \; \min\{m_1,m_2\} \geq 2;\\
\C \oplus \C, & \text{if} \ m_1 = m_n=1; \\
M_{m_n}(\C), & \text{if} \; m_1 = 1, m_n \geq 2, |\lambda_1 -\lambda_n| \leq \cos \frac{\pi}{(m_n+1)};\\
\C \oplus \M_{m_n}(\C), & \text{if} \ m_1 = 1, m_n \geq 2, |\lambda_1 -\lambda_n| > \cos \frac{\pi}{(m_n+1)};\\
M_{m_1}(\C), & \text{if} \ m_1 \geq 2, m_n = 1 , |\lambda_1 -\lambda_n| \leq \cos \frac{\pi}{(m_1+1)};\;  \\
M_{m_1}(\C) \oplus \C, & \text{if} \ m_1 \geq 2, m_n =1 , |\lambda_1 -\lambda_n| > \cos \frac{\pi}{(m_n+1)},
\end{cases} 
\]
implying that $\mO\mS(J)$ is $\mathrm{(min, ess)}$-nuclear but not $(\min, \max)$-nuclear for all of the
above cases except for the case when  $m_1 = m_n=1$. If $m_1 = m_n=1$, $\mO\mS(J)$ turns out to be $(\min, \max)$-nuclear.

\end{example}


\subsection*{Acknowledgements}
{\small The first named author would like to thank Professors Gilles
  Pisier and Vern I.~Paulsen for introducing him to the worlds of
  Operator Spaces and Operator Systems at a workshop held at IMSc,
  Chennai, and Professor V.~S.~Sunder for organizing that workshop.
  The second named author would like to express her sincere gratitude
  to Professor Ajay Kumar for his kind efforts in acquainting her with
  all the tools of Operator Algebras and literature available with
  him; and would also like to thank Professor ~K.~H.~Han for several
  fruitful discussions over email. The authors would also like to
  thank the referee for the constructive comments that helped to improve the manuscript.}

\bibliographystyle{plain}
\bibliography{paperref}

\end{document}